\documentclass[12pt,a4paper]{article}
\usepackage[T1]{fontenc}
\usepackage[polish,english]{babel}

\usepackage{amsthm,amssymb,amsmath,mathtools}

\usepackage{tikz}
\usepackage{hhline}

\usepackage{hyperref}


\newtheoremstyle{styl}{}{}{}{}{\bf}{.}{ }{}
\theoremstyle{styl}
\newtheorem{tw}{Theorem}

\newtheorem{fa}[tw]{Fact}

\newtheorem{wn}[tw]{Corollary}

\theoremstyle{definition}
\newtheorem{df}[tw]{Definition}

\newtheorem{Przyklad}[tw]{Example}

\newcommand{\n}{\noindent}

\theoremstyle{remark}



\newcommand{\DS}{\mathcal{A}}
\newcommand{\I}{\mathcal{I}}
\newcommand{\J}{\mathcal{J}}

\newcommand{\h}{\varphi}

\newcommand{\Hom}{\mathsf{Hom}}
\newcommand{\PHom}{\mathsf{PHom}}

\newcommand{\dom}{\mathsf{dom}}

\renewcommand{\max}{\mathsf{max}}
\renewcommand{\min}{\mathsf{min}}

\newcommand{\algebra}[1]{\boldsymbol{#1}}
\newcommand{\A}{\algebra{A}}
\newcommand{\B}{\algebra{B}}
\newcommand{\C}{\algebra{C}}

\newcommand{\TO}{\rightarrow}
\newcommand{\OT}{\leftarrow}

\newcommand{\fun}{\longrightarrow}
\newcommand{\id}{\textsf{id}}

\usepackage{xcolor}


\title{Decomposition of Płonka sums\\ into direct systems}

\author{Krystyna Mruczek-Nasieniewska, Mateusz Klonowski}

\date{}

\begin{document}

\maketitle

\thispagestyle{empty}

\vspace{-1cm}

\begin{abstract}
The Płonka sum is an algebra determined using a structure called a direct system. By a direct system, we mean an indexed family of algebras with disjoint universes whose indexes form a join-semilattice s.t. if two indexes are in a partial order relation, then there is a homomorphism from the algebra of the first index to the algebra of the second index. The sum of the sets of the direct system determines the universe of the Płonka sum. Therefore, to speak about a Płonka sum, there must be a direct system on which this algebra is based. However, we can look at a Płonka sum the other way around, considering it as some given algebra, and ask whether it is possible to determine all direct systems of it systematically. 

In our paper, we will decompose the Płonka sum in such a way as to give a solution to the indicated problem. Moreover, our method works for any algebra of the kind considered in the article, and thus, we can determine if a given algebra is a Płonka sum. The proposed method is based on two concepts introduced in the paper: isolated algebra and Płonka homomorphism.      
\end{abstract}

\n\textbf{Key words}: decomposition, direct frame, direct system, isolated algebra, Płonka homomorphism, Płonka sums

\bigskip

\n\textbf{MSC}: 06F25, 08A30, 08A62 

\section*{Introduction}
\label{Section 0}

\hypertarget{Section 0}{In \cite{JP1967a} J. Płonka defined a certain algebraic structure, called in the literature a \textit{Płonka sum}.} 
The Płonka sum was constructed to analyze equational classes determined by certain regular identities, i.e., such identities where the sets of variables of the left and right sides are equal. Considering classes defined by regular identities is part of research on identities, in which the structure of terms creating them matters. Example of such identities are: normal, externally compatible, P-compatible, biregular, symmetric and many others (see for example \cite{EG1990,KMN2016,KMN2000,KMN2013,KMN2003,KMN2010,JP1967a,JP1967b,JPiAR1992,ARiJS2002}). In recent years, Płonka sums have been analyzed not only within universal algebra but also logic. For example, they are used as part of the semantic analysis of variable inclusion logic (see \cite{SBiTMiMPB2020,SBiFPiMPB2022,FPiMPBiDES2021}). This logic is an example of a formal system whose semantics take into account not only logical connections but also other dependencies, and relations between sentences (cf. \cite{RE1990,TJiFP2021,MK2021,SK1991}). 

To define a Płonka sum, we use a direct system, which is an indexed family of algebras with disjoint universes whose indexes form a join-semilattice s.t. if two indexes are in a partial order relation, then there is a homomorphism from the algebra of the first index to the algebra of the second index.
The sum of the sets of the direct system determines the universe of the Płonka sum. Therefore, to speak about a Płonka sum, there must be a direct system on which this algebra is based. However, we can look at a Płonka sum the other way around, considering it as some given algebra, and ask whether it is possible to determine all direct systems of it systematically.

Therefore, we can analyze the Płonka sum from two perspectives. From the first, the direct system of a given Płonka sum is determined and known. In other words, the first perspective employs a direct system to determine the Płonka sum. From the second, the Płonka sum is presented as a universe on which operations are defined. Thus, the second perspective acknowledges the Płonka sum as an algebra with a universe that can be expressed as a union of a family of sets forming a certain direct system. Of course, a direct system for a given Płonka sum must exist in both cases.
The difference is that in the first case, it is known, and in the second, it is not.
Usually, in the literature, the first perspective is adopted without asking how, given the algebra being the Płonka sum, all the direct systems that determine it can be identified. 

In our paper, we will present a method for determining the direct systems of a given Płonka sum by decomposing this sum so as to obtain an appropriate indexed family of algebras with disjoint universes forming semilattice and an appropriate set of homomorphisms. 
In other words, we will represent direct systems by structures we can receive by our method of Płonka sums decomposition. Such an approach fits into the investigations on structures' representation. A well-known example of a result related to this topic might be, for instance, Stone's representation theorem for Boolean algebras. Moreover, since the universe of the Płonka sum is a set-theoretic union, it seems natural to ask about the decomposition of the Płonka sum, which considers the division of its universe into disjoint sets.
Such a problem fits into the general investigations on complex structures' decomposition (or factorization). A well-known example of a result related to this topic might be, for instance, the fundamental theorem of arithmetic or the fundamental subdirect representation theorem of G. Birkhoff.

Note that in \cite{JP1967a}, Płonka presented a partition function determined on a universe of a given algebra. Using any of such functions, one can also decompose an algebra being a Płonka sum receiving its direct systems. However, one can also notice that finding a partition function starting with an algebra is a more difficult task than applying our method as we employ less complex notions.     

There are some other constructions known in the literature that propose some decompositions of Płonka sums (see \cite{HLiRPiCP1972,JP1989}). 
The first of the decompositions was presented in \cite{HLiRPiCP1972}. It indicates (with certain assumptions) subdirectly irreducible algebras in the class of all Płonka sums. It has been proven that they are either subdirect irreducible algebras in the initial class or are created from them after adding the so-called absorbing element.
The second of the decompositions was presented in \cite{{JP1989}}. This work shows that each Płonka sum can be represented as a subdirect product of two Płonka sums obtained from the initial sum, the so-called sum with zero and sum with unit. 

However, these decompositions are made assuming we know the direct system determining the initial Płonka sum. As mentioned in our paper, we will decompose algebras being Płonka sums without knowing their direct systems.
Our method works for any algebra of the kind considered in the article, and thus, we can determine if a given algebra is a Płonka sum.

The article consists of five sections. In Section \ref{Section 1}, we present some preliminary notions used in the paper. In Section \ref{Section 2}, we introduce the definition of a Płonka sum. In Section \ref{Section 3}, we introduce key concepts used in our decomposition method: isolated algebras and Płonka homomorphism. In Section \ref{Section 4}, we present the main result of the paper --- the decomposition theorem. Section \ref{Section 5}, contains some exemplary illustrations of the decomposition method we propose. The \hyperlink{Appendix}{Appendix} contains examples of algebra characterized by some definitions we present. In some cases, the algebras discussed in the \hyperlink{Appendix}{Appendix} allow us to better formulate various problems that interest us in this article.  

\section{Preliminaries}
\label{Section 1}

\hypertarget{Section 1}{Let $\tau\colon F\rightarrow N$ be a type of algebras where $F$  is a set of fundamental operation symbols and $N$ is the set of all positive integers.}  
For an algebra $\A=\langle A;F_{1}^{\A},\ldots,F_{n}^{\A}\rangle$, by $ar(k)$ (where $k \leq n$) we denote the arity of operation $F_{k}^{\A}$. We will denote the restriction of the function $F_{k}^{\A}$ to the set $B^{ar(k)}\subseteq A^{ar(k)}$ by $F_{k}^{\A}\! \!\upharpoonright_{B}$. We will say that two algebras are \textit{of the same type} (\textit{similar}) if they have the same number of operations with the same arity. In the sequel of the paper, we will analyze algebras having at least one operation of arity not smaller than $2$. As we will find out, Płonka sums of algebras in which all operations are unary are not interesting.

One of the main concepts used in the paper is the concept of join-semilattice. We will sometimes consider this structure as a pair $\langle A,R\rangle$, where $R$ is a partial order on $A$ and for any $x,y\in A$, there is the least upper bound of the set $\{x,y\}$, and sometimes as a structure with one two-argument the least upper bound operation $\vee$. 

Let $\A$ and $\B$ be similar algebras. By $\dom(\A)$ we will denote the universe of $\A$, and by $\Hom(\A,\B)$ we denote the set of all homomorphisms from $\A$ to $\B$. 

Let $\B,\C$ be subalgebras of $\A$. For any $\varphi\in\Hom(\B,\C)$, we define function $f_{\varphi}\colon A\fun A$ in the following way:
\[
f_{\varphi}(a)=
\begin{cases}
\varphi(a),\quad &\text{ if }a\in B\\
a,\quad &\text{ if }a\notin B.
\end{cases}
\]

By $\max$ and $\min$, we will denote the greatest element and the least element of a given set, respectively, with respect to a given order if such elements exist.

\section{Płonka Sum}
\label{Section 2}

We now present the Płonka sum formally.
We start with a definition of a direct system.

\begin{df}\label{def:system-prosty}
A \textit{direct system} is a triple $\DS=\langle\I,\{\A_{i}\}_{i\in I},\allowbreak\{\h_{i,j}\}_{i\leqslant j;i,j\in I}\rangle$ s.t.:
\begin{itemize}
\item[a.] $\I=\langle I,\leqslant\rangle$ is a join-semilattice,
\item[b.] $\{\A_{i}\}_{i\in I}$ a non-empty family of algebras of the same type and with disjoint universes,
\item[c.] $\{\h_{i,j}\}_{i,j\in I}$ is a family of homomorphism s.t. for all $i,j\in I$:
\begin{itemize}
    \item if $i\leqslant j$, then there is a homomorphism s.t. $\h_{i,j}:A_{i}\fun A_{j}$,
    \item if $i\leqslant j\leqslant k$, then $\h_{j,k}\circ\h_{i,j}=\h_{i,k}$,
    \item $\varphi_{i,i}=\id_{A_{i}}$.\footnote{For simplicity of notation, we will omit identity homomorphism in our considerations --- we will assume by default that $\varphi_{i,i}=\id_{A_{i}}$.}
\end{itemize}
\end{itemize}
Additionally, we adopt the following notions:
\begin{itemize}
\item a direct system $\DS$ is called a \textit{non-trivial} one iff there are $i,j\in I$ s.t. $i\neq j$,
\item an ordered pair $\langle\I,\{\A_{i}\}_{i\in I}\rangle$ s.t. the conditions a and b are satisfied we call a \textit{direct frame}.
\end{itemize}
\end{df}

Let us now introduce the definition of the Płonka sum.

\begin{df}\label{def:suma-Plonki}
Let $\DS=\langle\I,\{\A_{i}\}_{i\in I},\allowbreak\{\h_{i,j}\}_{i\leqslant j; i,j\in I}\rangle$ be a direct system.
A \textit{Płonka sum} (in symb.: $S(\DS)$) is an algebra $\langle A;F_{1}^{\A},\ldots F_{n}^{\A}\rangle$ similar to algebras of family $\{\A_{i}\}_{i\in I}$ s.t.:
\begin{itemize}
\item $A=\bigcup_{i\in I}A_{i}$,
\item for any $k\leq n$, for all $x_{1},\ldots,x_{ar(k)}\in A$:
\[
\notag
F_{k}^{\A}(a_{1},\ldots,a_{ar(k)}):=F_{k}^{\A_{m}}(\varphi_{i_{1},m}(a_{1}),\ldots,\varphi_{i_{ar(k)},m}(a_{ar(k)})),
\]
where for any $j\leq ar(k)$, $a_{j}\in A_{i_{j}}$ and $m=i_{1}\vee\ldots\vee i_{ar(k)}$.
\end{itemize}
\end{df}

\noindent In the \hyperlink{Appendix}{Appendix}, we present several examples of Płonka sums, which we will refer to in the sequel of our paper.

Having introduced the definitions of the direct system and the Płonka sum, we can try to compare our paper's main goal with the fundamental arithmetic theorem, Birkhoff's subdirect representation theorem, and Stone's representation theorem for Boolean algebras.  
Examining the Płonka sum by considering its direct systems can be compared to considering a given number as the product of its prime factors. In turn, the question about direct systems that determine a given Płonka sum can be compared to the problem of determining the prime factors of a given number. 
We can also make a comparison with Birkhoff's theorem. In the case of decomposing an algebra into subdirect products, the aim is to obtain algebras that are less complex than the initial algebra and which, when properly combined, allow us to obtain the initial algebra. Our paper aims to decompose an algebraic structure into `smaller' algebras so that we receive the former by union of the latter.
On the other hand, Stone's theorem can also be recalled as a good reference point for our research. In our article, we will show that every direct system is isomorphic to a certain system of isolated algebras --- we define the notion system of isolated algebras in the sequel.

In the \hyperlink{Introduction}{Introduction}, we also mentioned the notion of partition function. Let us also introduce its formal definition. 

\begin{df}\label{def:P-funkcja}
Let $\A=\langle\A;F_{1}^{\A},\ldots,F_{n}^{\A}\rangle$ be an algebra. A \textit{partition function of $\A$} is a function $f\colon A\times A\fun A$ s.t. the following conditions are satisfied:
\begin{enumerate}
\item[P1.] $f(f(a,b),c)=f(a,f(b,c))$,
\item[P2.] $f(a,a)=a$,
\item[P3.] $f(a,f(b,c))=f(a,f(c,b))$,
\item[P4.] $f(F_{k}^{\A}(a_{1},\ldots, a_{ar(k)}),b)=F_{k}^{\A}(f(a_{1},b),\ldots,f(a_{ar(k)},b))$,
\item[P5.] $f(b,F_{k}^{\A}(a_{1},\ldots, a_{ar(k)}))=f(b,F_{k}^{\A}(f(b,a_{1}),\ldots,f(b,a_{ar(k)})))$,
\item[P6.] $f(F_{k}^{\A}(a_{1},\ldots, a_{ar(k)}),a_{l})=F_{k}^{\A}(a_{1},\ldots, a_{ar(k)})$, where $1\leq l\leq ar(k)$,
\item[P7.] $f(a,F_{k}^{\A}(a,\ldots, a))=a$.
\end{enumerate}
\end{df}

\n In \cite{JP1967a}, Płonka, by means of a partition function, defines an equivalence relation on the universe of a given algebra. The equivalence classes are subalgebras of the given algebra and form a join-semilattice. Moreover, he shows how to define, by means of the partition function, the set of homomorphisms satisfying the conditions presented in Definition \ref{def:system-prosty}.c. Therefore, if one can define a partition function, one can also find a direct system. Płonka also shows that if we have a direct system, by means of homomorphisms, we can define a partition function. We have the following corollary:

\begin{wn}\label{wn:sum-Plonki-wtw-p-function}
Algebra $\A$ is a Płonka sum iff there is a partition function of  $\A$.     
\end{wn}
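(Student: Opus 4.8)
The plan is to prove both directions of the biconditional in Corollary~\ref{wn:sum-Plonki-wtw-p-function}, treating it as a consequence of Płonka's original results cited just above the statement. The equivalence essentially repackages two facts already attributed to \cite{JP1967a}: that a partition function on $\A$ induces a direct system whose Płonka sum is $\A$, and that a direct system for $\A$ induces a partition function on $\A$. So the proof is a matter of assembling these two constructions and verifying they land where we want.

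For the direction ($\Leftarrow$), suppose $f$ is a partition function of $\A$. First I would use $f$ to define a binary relation $\approx_{f}$ on $A$ by setting $a\approx_{f}b$ iff $f(a,b)=b$ and $f(b,a)=a$; the axioms P1--P3 should guarantee this is an equivalence relation (reflexivity from P2, and symmetry/transitivity from the associativity and commutativity-like conditions). Each equivalence class $[a]$ is then shown to be closed under every $F_{k}^{\A}$ using P6, so the classes carry subalgebra structure $\A_{i}$ with pairwise disjoint universes. Next I would put a partial order on the quotient $I = A/\!\approx_{f}$ by declaring $[a]\leqslant[b]$ iff $f(a,b)=b$, check via P1--P3 that this is well defined and a join-semilattice (the join of $[a]$ and $[b]$ being the class of $f(a,b)$, using P4 to see $f(a,b)$ interacts correctly with operations). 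For $i\leqslant j$ I would define $\varphi_{i,j}(a) := f(a,b)$ for a fixed representative $b$ of $j$, and verify using P4 and P5 that this is a homomorphism $\A_{i}\to\A_{j}$ satisfying the coherence condition $\varphi_{j,k}\circ\varphi_{i,j}=\varphi_{i,k}$ from Definition~\ref{def:system-prosty}.c. Finally I would confirm that the Płonka sum of this direct system recovers the original operations of $\A$: for $a_{1},\dots,a_{ar(k)}$ in respective classes with join $m$, the identity $F_{k}^{\A}(a_{1},\dots,a_{ar(k)}) = F_{k}^{\A_{m}}(\varphi(a_{1}),\dots,\varphi(a_{ar(k)}))$ should follow from repeated application of P4 and P7.

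For the direction ($\Rightarrow$), suppose $\A = S(\DS)$ for some direct system $\DS=\langle\I,\{\A_{i}\}_{i\in I},\{\varphi_{i,j}\}\rangle$. I would define $f\colon A\times A\fun A$ by $f(a,b) := \varphi_{i,\,i\vee j}(a)$ whenever $a\in A_{i}$ and $b\in A_{j}$; intuitively this transports $a$ up to the join index without mixing in the value of $b$ beyond its index. Then I would verify P1--P7 one by one directly from the semilattice laws on $I$ and the homomorphism-coherence conditions $\varphi_{j,k}\circ\varphi_{i,j}=\varphi_{i,k}$ and $\varphi_{i,i}=\id$. For instance P2 is immediate since $i\vee i=i$ and $\varphi_{i,i}$ is the identity, while P4 and P5 follow because the $\varphi_{i,j}$ are homomorphisms and the Płonka-sum operations are defined by transporting to a common index.

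The main obstacle I expect is not any single step but the bookkeeping in the ($\Leftarrow$) direction: checking that $\leqslant$ is genuinely a partial order and a semilattice (antisymmetry and the join property), that $\varphi_{i,j}$ is independent of the choice of representative $b$ of the class $j$, and that the recovered operations coincide with those of $\A$ rather than merely being similar. These verifications rely delicately on the interaction axioms P4--P7, and P5 in particular (with its doubly nested $f$) is the least transparent to apply. However, since both constructions are explicitly credited to \cite{JP1967a} in the paragraph preceding the statement, the cleanest route is to invoke those results directly and restrict the new verification to confirming that the two constructions are inverse enough to yield the stated biconditional.
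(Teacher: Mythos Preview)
Your proposal is correct and aligns with the paper's treatment: the paper gives no explicit proof of this corollary, instead presenting it as an immediate consequence of the results of \cite{JP1967a} summarized in the preceding paragraph (partition function $\Rightarrow$ direct system, and direct system $\Rightarrow$ partition function). Your detailed sketch of the two constructions is more than the paper provides, but your closing remark---that the cleanest route is simply to invoke \cite{JP1967a}---is exactly what the paper does.
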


Although we can use a partition function to decompose a Płonka sum, we don't know how to find, in a systematic way, a function that satisfies conditions P1--P7. The conditions P1--P7 are not easy to satisfy, and connecting them straightforwardly with conditions assumed for the direct system isn't easy either. 
We think that we can offer an easier-to-be-applied and more straightforwardly aimed at receiving direct systems method of decomposition.

Since our goal is to indicate the direct system for an algebra being the Płonka sum, we have to determine the following components: 
\begin{itemize}
\item[$\blacktriangleright$] an indexed family of algebras with disjoint universes where the set of indexes form a join-semilattice;
\item[$\blacktriangleright$] a set of homomorphisms satisfying given conditions.
\end{itemize}
In the next section, we will reconstruct both the family of algebras and the set of homomorphisms. Then, we will show that the obtained system will, in fact, define the initial algebra which will mean that the goal of our article has been achieved. 

\section{Isolated Algebras and Płonka Homomorphism}
\label{Section 3}

Let us now introduce the notion of an isolated algebra and a Płonka homomorphism --- the key notions of our decomposition method of a Płonka sum. 

\subsection{Isolated Algebras}

\begin{df}\label{def:algebra-splitujaca}
Let $\A=\langle A;F_{1}^{\A},\ldots,F_{n}^{\A}\rangle$ and $\B=\langle B;F_{1}^{\B},\ldots,F_{n}^{\B}\rangle$ be algebras of the same type. Algebra $\B$ is an \emph{algebra isolated wrt $\A$} (an \textit{isolated algebra}) iff the following conditions are satisfied: 
\begin{itemize}\itemsep=5pt 
\item $\B$ is a subalgebra of $\A$, 
\item for every $m\leq n$, for all $a_1,\dots,a_{ar(m)}\in A$, if there is $i\leq ar(m)$ s.t. $a_i\not\in B$, then $F_{m}^{\A}(a_1,\dots,a_{ar(k)})\not\in B$.
\end{itemize}
\end{df}

\n The examples of algebras isolated wrt a Płonka sum were presented in Example \ref{Przyklad-Spl} (see \hyperlink{Appendix}{Appendix}).

As can be easily seen, in the case of algebras of type (1), each of their subalgebras will be isolated for the obvious reason. As we stated above, we will focus on algebras that have at least one operation of arity greater than $1$. Moreover, we will consider algebras without constants.

Additionally, in the article, we will only consider algebras with respect to which there are finitely many isolated algebras. This does not mean, however, that the algebras we are considering, in particular the Płonka sums, must be finite or that algebras isolated wrt them are finite. In Example \ref{Przyklad-Nieskonczona-SP} (see \hyperlink{Appendix}{Appendix}), an infinite algebra (Płonka sum) is specified, with respect to which we can determine finitely many infinite isolated algebras.

Note that in Example \ref{Przyklad-Nieskonczona-SP} (see \hyperlink{Appendix}{Appendix}), we indicated for the Płonka sum its direct system consisting of two algebras. The first one, with the smaller index, is an isolated algebra. The second one, with the greater index, is not isolated. But the union of the first and the second is an isolated algebra. We can generalize this observation. Namely, if we consider a union of the universe of a given algebra and the universes of all algebras with the smaller indexes, we will obtain an isolated algebra.

Let $\DS=\langle\I,\{\A_{i}\}_{i\in I},\{\h_{i,j}\}_{i\leqslant j;i,j\in I}\rangle$ be a direct system. For every $i\in I$, we put: 
\[
\label{def:Spl+}\tag{$\dagger$}A_{i}^{+}:=\bigcup_{j\leqslant i}A_{j}.
\]

\n We can observe that for every $i\in I$, $A_{i}^{+}\neq\emptyset$. Moreover, we can prove that for every $i\in I$, $A_{i}^{+}$ is closed under operations of $S(\DS)$. 

\begin{fa}\label{fa:DS->AlgSpl}
Let $\DS=\langle\I,\{\A_{i}\}_{i\in I},\{\h_{i,j}\}_{i\leqslant j;i,j\in I}\rangle$ be a direct system. 
Then, $A^{+}_{i}$ is closed under operations of $S(\DS)$. 
\end{fa}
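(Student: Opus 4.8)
The plan is to show that for every fundamental operation $F_k^{\A}$ of the Płonka sum $S(\DS)$ and every tuple of arguments drawn from $A_i^+$, the result again lies in $A_i^+$. Fix $i\in I$ and take $a_1,\dots,a_{ar(k)}\in A_i^+$. By the definition (\ref{def:Spl+}) of $A_i^+$, for each $\ell\leq ar(k)$ there is an index $j_\ell\leqslant i$ with $a_\ell\in A_{j_\ell}$. I would then invoke the definition of the Płonka sum operation (Definition \ref{def:suma-Plonki}): setting $m=j_1\vee\dots\vee j_{ar(k)}$, we have
\[
F_k^{\A}(a_1,\dots,a_{ar(k)})=F_k^{\A_m}\bigl(\varphi_{j_1,m}(a_1),\dots,\varphi_{j_{ar(k)},m}(a_{ar(k)})\bigr).
\]
The right-hand side is an element of $A_m$, since $F_k^{\A_m}$ is an operation of the algebra $\A_m$ and each $\varphi_{j_\ell,m}(a_\ell)$ lies in $A_m$.

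The key step is to verify that $m\leqslant i$, so that $A_m\subseteq A_i^+$ and the result lands in $A_i^+$ as required. This follows because $\I=\langle I,\leqslant\rangle$ is a join-semilattice and $m=j_1\vee\dots\vee j_{ar(k)}$ is the least upper bound of the $j_\ell$'s: since each $j_\ell\leqslant i$, the element $i$ is an upper bound of $\{j_1,\dots,j_{ar(k)}\}$, and hence the least such upper bound satisfies $m\leqslant i$. I should note here that the join $m$ is well-defined: a join-semilattice guarantees binary joins, and associativity of $\vee$ extends this to the finitary join $j_1\vee\dots\vee j_{ar(k)}$, using that $ar(k)$ is a positive integer so the tuple is nonempty.

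Combining the two observations, $F_k^{\A}(a_1,\dots,a_{ar(k)})\in A_m\subseteq\bigcup_{j\leqslant i}A_j=A_i^+$, which is exactly closure under $F_k^{\A}$. Since $k\leq n$ was arbitrary, $A_i^+$ is closed under all operations of $S(\DS)$.

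I do not expect any serious obstacle here: the statement is essentially a bookkeeping argument that unwinds the definitions of the Płonka sum operation and of $A_i^+$. The only point requiring a moment's care is the monotonicity of the finitary join, namely that $j_\ell\leqslant i$ for all $\ell$ forces $j_1\vee\dots\vee j_{ar(k)}\leqslant i$; everything else is immediate from the fact that the Płonka sum operation deposits its output into the algebra indexed by the join of the argument indices. One may also remark that this fact, together with the earlier observation that $A_i^+$ is nonempty, is what makes $A_i^+$ the universe of a genuine subalgebra of $S(\DS)$, foreshadowing its role as an isolated algebra in the decomposition to come.
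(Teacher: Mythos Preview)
Your proof is correct and follows essentially the same approach as the paper: pick a tuple from $A_i^+$, locate each entry in some $A_{j_\ell}$ with $j_\ell\leqslant i$, apply the Płonka sum operation to land in $A_m$ where $m$ is the join of the $j_\ell$'s, and conclude $m\leqslant i$ so $A_m\subseteq A_i^+$. The only differences are cosmetic (notation and your added remarks on well-definedness of the finitary join), not substantive.
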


\begin{proof}
Let $i\in I$, $m\leq n$ and $a_{1},\ldots, a_{ar(m)}\in A_{i}^{+}$. 
By Definition \ref{def:suma-Plonki}, $F^{S(\DS)}_{m}(a_{1},\allowbreak\ldots,\allowbreak a_{ar(m)})=F^{\A_{l}}_{m}(\h_{i_{1},l}(a_{1}),\ldots,\allowbreak\h_{i_{ar(m)},l}(\allowbreak a_{ar(m)}))$, where $l=i_{1}\vee\ldots\vee i_{ar(m)}$ and for every $j\leq ar(m)$, $a_{j}\in A_{i_{j}}$. By \eqref{def:Spl+} and Definition \ref{def:system-prosty}, $i_{1},\ldots, i_{ar(m)}\leqslant i$. Thus, $l\leqslant i$. Therefore, $A_{l}\subseteq A_{i}^{+}$, so $F^{S(\DS)}_{m}(a_{1},\ldots,a_{ar(m)})\in A_{i}^{+}$.
\end{proof}

\n Note that the following properties hold for any algebra $\A^{+}_{i}$ obtained from the algebra $\A_{i}$.

\begin{fa}\label{fa:DS->Spl}
Let $\DS=\langle\I,\{\A_{i}\}_{i\in I},\{\h_{i,j}\}_{i\leqslant j;i,j\in I}\rangle$ be a direct system. Then: 
\begin{enumerate}\itemsep=0pt
\item for all $i\in I$, $\A^{+}_{i}$ is an algebra isolated wrt $S(\DS)$,
\item for all $i,j\in I$, $A_{i}^{+}\subseteq A_{j}^{+}$ iff $i\leqslant j$,
\item there is $i\in I$ s.t. $\A_{i}^{+}=S(\DS)$,
\item for all $i,j\in I$, $A_{i}\cap A_{j}^{+}\neq\emptyset$ iff $i\leqslant j$,
\item for all $i,j\in I$, $A^{+}_{i}=A_{j}^{+}$ iff $i=j$,
\item for all $n\geq 2$, if $i_{1},\ldots, i_{n}\in I$ and $A_{i_{1}}^{+}\cap\ldots\cap A_{i_{n}}^{+}\neq\emptyset$, then there is $i\in I$ s.t. $A^{+}_{i}=A_{i_{1}}^{+}\cap\ldots\cap A_{i_{n}}^{+}$.
\end{enumerate}
\end{fa}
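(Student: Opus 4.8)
The six items separate naturally into those that rest only on disjointness of the universes and the two that additionally need our standing finiteness assumption, so the plan is to dispatch (2), (4), (5) first, then (1), then extract finiteness of $I$ and settle (3), leaving (6) for last. The engine throughout is a single disjointness observation that I would record once and reuse: since the universes $\{A_j\}_{j\in I}$ are pairwise disjoint and non-empty, every $a\in A$ lies in exactly one $A_j$, whence $a\in A_i^{+}$ iff that unique index $j$ satisfies $j\leqslant i$; equivalently, for each $k\in I$ one has $A_k\subseteq A_i^{+}$ iff $k\leqslant i$, and $A_k\cap A_i^{+}\neq\emptyset$ iff $k\leqslant i$. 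Granting this, (2), (4), (5) fall out directly: for (2) the direction $i\leqslant j\Rightarrow A_i^{+}\subseteq A_j^{+}$ is transitivity applied termwise to the defining unions, while the converse follows by picking $a\in A_i\subseteq A_i^{+}\subseteq A_j^{+}$ (using $A_i\neq\emptyset$) and reading off $i\leqslant j$; claim (4) is the recorded equivalence with $k=i$; and (5) is (2) together with antisymmetry of $\leqslant$.

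For (1) the algebra $\A_i^{+}$ is already a subalgebra of $S(\DS)$ by Fact \ref{fa:DS->AlgSpl}, so only the isolation condition remains. I would take arguments $a_1,\dots,a_{ar(m)}$ with $a_t\in A_{i_t}$ and some $a_l\notin A_i^{+}$; by Definition \ref{def:suma-Plonki} the value of $F_m^{S(\DS)}$ lands in $A_s$ with $s=i_1\vee\dots\vee i_{ar(m)}$. Since $a_l\notin A_i^{+}$ gives $i_l\not\leqslant i$ while $i_l\leqslant s$, we cannot have $s\leqslant i$, so $A_s\cap A_i^{+}=\emptyset$ and the value lies outside $A_i^{+}$, as required. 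Having (1) and (5), I would note that $i\mapsto\A_i^{+}$ injects $I$ into the set of algebras isolated wrt $S(\DS)$, which is finite by our standing assumption; hence $I$ is finite. A finite join-semilattice has a greatest element $\top=\bigvee I$, and then $A_\top^{+}=\bigcup_{k\in I}A_k=A$, which is exactly (3).

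The step I expect to demand the most care is (6), because $\I$ is only a join-semilattice, so meets need not exist and it is not a priori clear that an intersection of the $A_{i_t}^{+}$ is again of the form $A_i^{+}$. My plan is to pass to the set of common lower bounds $L=\{k\in I: k\leqslant i_t\text{ for all }t\}$ and show it has a greatest element. By the recorded disjointness fact, $A_{i_1}^{+}\cap\dots\cap A_{i_n}^{+}=\bigcup_{k\in L}A_k$, so the non-emptiness hypothesis forces $L\neq\emptyset$. The decisive point is that $L$ is closed under binary join: if $k,k'\in L$ then each $i_t$ is an upper bound of $\{k,k'\}$, so $k\vee k'\leqslant i_t$ and thus $k\vee k'\in L$. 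A non-empty, join-closed subset of the finite set $I$ contains its own join, so $i:=\bigvee L$ is the greatest element of $L$; consequently $\{k: k\leqslant i\}=L$, and therefore $A_i^{+}=\bigcup_{k\in L}A_k=A_{i_1}^{+}\cap\dots\cap A_{i_n}^{+}$, completing the argument.
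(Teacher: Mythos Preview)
Your proof is correct and follows essentially the same route as the paper: both arguments rest on the disjointness of the $A_j$ to establish the order-theoretic items, invoke the standing finiteness assumption on isolated algebras to make $I$ finite, and for item~(6) both identify the intersection with the union over the set of common lower bounds of $i_1,\dots,i_n$ and take its join. Your ordering is in fact slightly cleaner than the paper's---by establishing (5) before (3) you make explicit the injectivity of $i\mapsto\A_i^{+}$ needed to conclude that $I$ is finite, whereas the paper's proof of (3) cites only item~(1) and the standing assumption, leaving that injectivity step implicit.
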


\begin{proof}
Ad 1. Let $i\in I$, $m\leq n$ and $a_{1},\ldots, a_{ar(m)}\in \bigcup_{j\in I}A_{j}$. We assume that for some $l\leq ar(m)$, $a_{l}\notin A_{i}^{+}$. Thus, for all $j\leqslant i$, $a_{l}\notin A_{j}$. Then, by Definition \ref{def:algebra-splitujaca}, for all $j\leqslant i$, $F_{m}^{S(\DS)}(a_{1},\ldots,a_{ar(m)})\notin A_{j}$. Therefore, by \eqref{def:Spl+}, $F_{m}^{S(\DS)}(a_{1},\ldots a_{ar(m)})\notin A_{i}^{+}$.

Ad 2. Let $i,j\in I$. ($\TO$) Suppose $A_{i}^{+}\subseteq A_{j}^{+}$. Thus, by \eqref{def:Spl+}, $A_{i}\subseteq A_{j}^{+}$, so for any $a\in A_{i}$, there is $k\leqslant j$ s.t. $a\in A_{k}$. By Definition \ref{def:system-prosty}, for all $k\in I$, $i=k$ or $A_{i}\cap A_{k}=\emptyset$. 
Therefore, $i\leqslant j$. ($\OT$) By \eqref{def:Spl+}, if $i\leqslant j$, then $A_{i}^{+}\subseteq A_{j}^{+}$.

Ad 3. By the assumption concerning the algebras we consider in the article and 1, $\I$ is a finite join-semilattice. 

Ad 4. Let $i,j\in I$. ($\TO$) Suppose that $A_{i}\cap A_{j}^{+}\neq\emptyset$. Let $a\in A_{i}\cap A_{j}^{+}$. Since $a\in A_{j}^{+}$, by \eqref{def:Spl+}, $a\in A_{j_{0}}$, where $j_{0}\leqslant j$. Thus, $A_{i}\cap A_{j_{0}}\neq\emptyset$. By Definition \ref{def:system-prosty}, ${i}={j_{0}}\leqslant j$. ($\OT$) By 3, if $i\leqslant j$, then $A_{i}\cap A_{j}^{+}\neq\emptyset$. 

Ad 5. Let $i,j\in I$. ($\TO$) Suppose that $A_{i}^{+}=A_{j}^{+}$. Thus, by \eqref{def:Spl+}, $\bigcup_{k\leqslant i}A_{k}=\bigcup_{k\leqslant j}A_{k}$. Hence, $A_{i}\subseteq A_{j}^{+}$ and $A_{j}\subseteq A_{i}^{+}$. By 4, $i=j$. Therefore, $A_{i}=A_{j}$. ($\OT$) Obvious. By Definition \ref{def:system-prosty}, if $i=j$, then $A_{i}=A_{j}$, so by \eqref{def:Spl+}, $A_{i}^{+}=A_{j}^{+}$. 

Ad 6. Let $B:=A_{i_{1}}^{+}\cap\ldots\cap A_{i_{n}}^{+}$ and $X:=\{A_{j}:j\in I\text{ and }A_{j}\cap B\neq\emptyset\}$. Note that $X$ is finite and non-empty. Thus, $X=\{A_{j_{1}},\ldots, A_{j_{k}}\}$, for some $k\geq 1$. Let $l:=j_{1}\vee\ldots\vee j_{k}$. By definition of $X$, $A_{i_{1}}^{+}\cap\ldots\cap A_{i_{n}}^{+}\subseteq A_{l}^{+}$. By 4, for all $o\leq k$, for all $p\leq n$ $j_{o}\leqslant i_{p}$. Thus, $A_{l}^{+}\subseteq A_{i_{1}}^{+}\cap\ldots\cap A_{i_{n}}^{+}$. 
\end{proof}

Let us now proceed to the analysis of the set of isolated algebras in terms of order and other set-theoretic properties.

The set of algebras isolated wrt a given algebra can be partially ordered using the inclusion relation. As the following fact shows, the relation of being an isolated algebra is also a relation of partial order. 

\begin{fa}\label{fa:Spl-subseteq}
Let $\A=\langle A; F_{1}^{\A},\ldots,F_{n}^{\A}\rangle$ be an algebra and $\{\A_{i}\}_{i\in I}$ be a set of algebras isolated wrt $\A$. Then, for all $i,j\in I$, $\A_{i}$ is isolated wrt $\A_{j}$ iff $A_{i}\subseteq A_{j}$.    
\end{fa}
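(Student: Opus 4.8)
The plan is to prove the two implications separately, with the forward direction essentially immediate and the reverse direction resting on the observation that both $\A_i$ and $\A_j$ are subalgebras of $\A$, so their fundamental operations are restrictions of those of $\A$. For the direction ($\TO$), I would simply unpack Definition~\ref{def:algebra-splitujaca}: if $\A_i$ is isolated wrt $\A_j$, then the first clause of that definition says that $\A_i$ is a subalgebra of $\A_j$, whence $A_i\subseteq A_j$ at once.

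For the direction ($\OT$), assume $A_i\subseteq A_j$. First I would verify that $\A_i$ is a subalgebra of $\A_j$. Since $\A_i$ is isolated wrt $\A$, it is in particular a subalgebra of $\A$, so $A_i$ is closed under every $F_m^{\A}$; and since $\A_j$ is likewise a subalgebra of $\A$, each $F_m^{\A_j}$ equals the restriction $F_m^{\A}\!\upharpoonright_{A_j}$, which agrees with $F_m^{\A}$ on tuples drawn from $A_i\subseteq A_j$. Hence $A_i$ is closed under the operations of $\A_j$, giving the subalgebra clause.

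It then remains to check the second clause of Definition~\ref{def:algebra-splitujaca} for the pair $\A_i,\A_j$. The key point is that this clause for $\A_j$ quantifies only over tuples from $A_j\subseteq A$, so it is a weakening (a restriction) of the corresponding clause for $\A$. Concretely, I would take $m\leq n$ and $a_1,\ldots,a_{ar(m)}\in A_j$ with some $a_l\notin A_i$; as $A_j\subseteq A$ these are elements of $A$, and $F_m^{\A_j}(a_1,\ldots,a_{ar(m)})=F_m^{\A}(a_1,\ldots,a_{ar(m)})$ since the operation is a restriction. Because $\A_i$ is isolated wrt $\A$ and one argument lies outside $A_i$, the isolation clause for $\A_i,\A$ yields $F_m^{\A}(a_1,\ldots,a_{ar(m)})\notin A_i$, and therefore $F_m^{\A_j}(a_1,\ldots,a_{ar(m)})\notin A_i$, as required. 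I expect no genuine obstacle here; the only point needing care is the bookkeeping that the operations of $\A_i$ and $\A_j$ are honestly restrictions of those of $\A$, so that isolation relative to the smaller ambient algebra $\A_j$ is exactly the restriction of isolation relative to $\A$.
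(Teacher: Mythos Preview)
Your proof is correct and follows essentially the same approach as the paper's own proof: both directions are handled identically, with ($\TO$) immediate from Definition~\ref{def:algebra-splitujaca} and ($\OT$) obtained by noting that $\A_i,\A_j$ are subalgebras of $\A$ (so $\A_i$ is a subalgebra of $\A_j$) and that the isolation clause for $\A_i$ relative to $\A_j$ is just the restriction to $A_j$ of the isolation clause relative to $\A$. Your version is slightly more explicit about the operations being restrictions, but the argument is the same.
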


\begin{proof}
($\TO$) By Definition \ref{def:algebra-splitujaca}.
($\OT$) Suppose  $A_{i}\subseteq A_{j}$. Since $\A_{i},\A_{j}$ are subalgebras of $\A$, $\A_{i}$ is a subalgebra of $\A_{j}$. Moreover, for any $m\leq n$, for all $a_{1},\ldots,a_{ar(m)}\in A_{j}$, there is $l\leq ar(m)$ s.t. $a_{l}\notin A_{i}$. Then, since $\A_{i}$ is isolated wrt $\A$ and $A_{j}\subseteq A$, we have $F_{m}^{\A}(a_{1},\ldots,a_{ar(m)})\notin A_{i}$. 
\end{proof}

Regarding set-theoretic operations defined on the set of isolated algebras, let us first note that the set difference of an algebra and the isolated algebra is either an empty set or some subalgebra of the initial algebra.

\begin{fa}\label{fa:odejmowanie-SP}
Let $\A=\langle A; F_{1}^{\A},\ldots,F_{n}^{\A}\rangle$ be an algebra and $\{\A_{i}\}_{i\in I}$ be a set of algebras isolated wrt $\A$. Then, for all $i,j\in I$, if $A_{i}\subsetneq A_{j}$, then $A_{j}\setminus A_{i}$ is closed under operations of $\A$. 
\end{fa}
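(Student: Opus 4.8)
The plan is to verify closure directly from the two defining clauses of Definition \ref{def:algebra-splitujaca}, using the subalgebra property of $\A_{j}$ to land the output inside $A_{j}$ and the isolation property of $\A_{i}$ to keep it out of $A_{i}$. First I would fix $i,j\in I$ with $A_{i}\subsetneq A_{j}$, an operation index $m\leq n$, and arbitrary elements $a_{1},\ldots,a_{ar(m)}\in A_{j}\setminus A_{i}$. The goal then splits into two halves: showing $F_{m}^{\A}(a_{1},\ldots,a_{ar(m)})\in A_{j}$ and showing $F_{m}^{\A}(a_{1},\ldots,a_{ar(m)})\notin A_{i}$.

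For membership in $A_{j}$, I would use that $\A_{j}$ is isolated wrt $\A$, hence in particular a subalgebra of $\A$ by the first clause of Definition \ref{def:algebra-splitujaca}; since all the $a_{k}$ lie in $A_{j}$, closure of $\A_{j}$ gives $F_{m}^{\A}(a_{1},\ldots,a_{ar(m)})\in A_{j}$. For non-membership in $A_{i}$, I would observe that every $a_{k}\in A_{j}\setminus A_{i}$ satisfies $a_{k}\notin A_{i}$, so there is an index $l$ (say $l=1$) with $a_{l}\notin A_{i}$; applying the second clause of Definition \ref{def:algebra-splitujaca} to the isolated algebra $\A_{i}$ (legitimate because all inputs lie in $A$, as $A_{j}\subseteq A$) yields $F_{m}^{\A}(a_{1},\ldots,a_{ar(m)})\notin A_{i}$. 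Combining the two halves places the output in $A_{j}\setminus A_{i}$, which is exactly the required closure.

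The main obstacle is essentially only one of bookkeeping: being careful about which hypothesis delivers each half of the conclusion, namely that the subalgebra property of $\A_{j}$ gives the positive membership while the isolation property of $\A_{i}$ gives the negative one. No genuine difficulty arises, and in particular the proper containment $A_{i}\subsetneq A_{j}$ is not needed for the closure argument itself --- it is used only to guarantee that $A_{j}\setminus A_{i}$ is non-empty, so that the closed set is a genuine subalgebra as announced in the surrounding text; if one had $A_{i}=A_{j}$, closure would hold vacuously.
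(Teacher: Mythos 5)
Your proof is correct and is exactly the argument the paper compresses into its one-line proof ``By Definition \ref{def:algebra-splitujaca}'': the subalgebra clause of that definition applied to $\A_{j}$ gives $F_{m}^{\A}(a_{1},\ldots,a_{ar(m)})\in A_{j}$, and the isolation clause applied to $\A_{i}$ gives $F_{m}^{\A}(a_{1},\ldots,a_{ar(m)})\notin A_{i}$. Your closing remark that the strict inclusion $A_{i}\subsetneq A_{j}$ serves only to make $A_{j}\setminus A_{i}$ non-empty (as needed later, e.g.\ in Corollary \ref{wn:odejmowanie+suma-SP}) is also accurate.
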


\begin{proof}
By Definition \ref{def:algebra-splitujaca}.
\end{proof}

For our further considerations, the observation that a finite, non-trivial union of isolated incomparable algebras is not an algebra will also be important.

\begin{fa}\label{fa:suma-SP}
Let $\A=\langle A; F_{1}^{\A},\ldots,F_{n}^{\A}\rangle$ be an algebra and $\{\A_{i}\}_{i\in I}$ be a set of algebras isolated wrt $\A$. Then, if there is no $i\in I$ s.t. $\bigcup_{j\in (I\setminus \{i\})}A_{j}\subseteq A_{i}$ (there is no greatest element in $\{\A_{i}\}_{i\in I}$), then there is no subalgebra of $\A$ with the universe $\bigcup_{i\in I}A_{i}$.
\end{fa}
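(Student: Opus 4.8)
The plan is to argue by contraposition: I assume that $U:=\bigcup_{i\in I}A_i$ \emph{is} the universe of a subalgebra of $\A$ (equivalently, that $U$ is closed under all the operations $F_m^{\A}$) and derive that $\{\A_i\}_{i\in I}$ must then have a greatest element. First I would pass to the $\subseteq$-maximal members of the family. Since $\{\A_i\}_{i\in I}$ is finite, every $A_i$ is contained in some maximal one, so the union over the maximal members still equals $U$; moreover the maximal members are pairwise incomparable. The hypothesis that there is no $i$ with $\bigcup_{j\ne i}A_j\subseteq A_i$ means precisely that, in this finite inclusion-poset, the maximal members do not reduce to a single one (a unique maximal element of a finite poset would be greatest, and would then satisfy the excluded condition) --- so there are at least two of them. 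Hence it suffices to prove the following: the union of a nonempty antichain $A_1,\dots,A_t$ (with $t\ge 2$) of algebras isolated wrt $\A$ is never closed under the operations of $\A$.

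The basic mechanism is the isolation property of Definition \ref{def:algebra-splitujaca}. Fix an operation $F_m^{\A}$ with $ar(m)\ge 2$ (such an operation exists by the standing assumption of the paper). For two incomparable isolated algebras $A,A'$, choose $x\in A\setminus A'$ and $y\in A'\setminus A$ and set $z:=F_m^{\A}(x,y,\dots,y)$. Since $y\notin A$ occurs among the arguments, isolation of $A$ gives $z\notin A$; since $x\notin A'$ occurs among the arguments, isolation of $A'$ gives $z\notin A'$. Thus $z\notin A\cup A'$ while both arguments lie in $A\cup A'$. When $t=2$ this already witnesses that $U=A_1\cup A_2$ is not closed, which settles the base case.

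The main obstacle is that for $t\ge 3$ the element $z$ produced above, although it escapes the two chosen algebras, could still be caught by a third member of the family, so it need not leave $U$. To get around this I would first prove a \emph{no private element} lemma: if $U$ is closed and $t\ge 2$, then each $A_i\subseteq\bigcup_{j\ne i}A_j$. Indeed, a would-be private element $a\in A_i\setminus\bigcup_{j\ne i}A_j$, together with some $b\in A_j\setminus A_i$ (which exists by incomparability), gives $z:=F_m^{\A}(b,a,\dots,a)\notin A_i$ by isolation of $A_i$; closedness forces $z\in A_k$ for some $k$, and then isolation of $A_k$ forces $a\in A_k$, whence $k=i$ by privacy of $a$, contradicting $z\notin A_i$. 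With this lemma in hand, any single $A_i$ is redundant in the union, so $U=\bigcup_{j\ne i}A_j$ is the union of an antichain of size $t-1\ge 2$; an induction on $t$ then reduces everything to the base case $t=2$, where $z$ genuinely leaves the union. This contradiction shows that $U$ is not closed, and unwinding the reduction to maximal members yields the Fact.
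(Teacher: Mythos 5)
Your proof is correct, but it takes a genuinely different route from the paper's. The paper inducts directly on the cardinality of $I$: it removes one algebra $\A_{i_{l+1}}$ and splits into the case where the remaining family still has no greatest element (then the inductive hypothesis produces a tuple whose value escapes the smaller union, and two subcases patch things up depending on whether that value lands in $A_{i_{l+1}}$ or not) and the case where the remaining family does have a greatest element $A_{i_{0}}$ (then it picks $a\in A_{i_{l+1}}\setminus A_{i_{0}}$ and some $b\notin A_{i_{l+1}}$ from the rest and applies the two-algebra mechanism directly). You instead first pass to the antichain of $\subseteq$-maximal members --- using finiteness and the fact that a unique maximal element of a finite poset is already greatest --- and then induct on the size of that antichain via your ``no private element'' lemma: if the union were closed, every member would be covered by the remaining ones, so dropping a member leaves the \emph{same} closed set as the union of a strictly smaller antichain, contradicting the inductive hypothesis. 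Both arguments rest on exactly the same core mechanism (two incomparable isolated algebras plus an operation of arity $\geq 2$ yield, by Definition \ref{def:algebra-splitujaca}, an element escaping both), and both use the paper's standing assumption that there are only finitely many isolated algebras. What your route buys is a cleaner inductive step --- no case analysis and no patching, since the union is literally unchanged when a redundant member is dropped --- at the price of the preliminary reduction to maximal members and the extra lemma; the paper's route works with the family exactly as given, but pays for it with the subcases 1a/1b and the separate case in which deleting an algebra creates a greatest element.
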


\begin{proof}
We use induction on the cardinality of $I$.

\textit{Base step}. For $I=\{i,j\}$. Let $a\in\A_{i}\setminus\A_{j}$ and $b\in\A_{j}\setminus\A_{i}$. Let $m\leq n$ and $c_{1},\ldots, c_{ar(m)}\in\{a,b\}$, where $c_{1}=a$ and $c_{2}=b$. Then, by Definition \ref{def:algebra-splitujaca}, $F_{m}^{\A}(c_{1},\ldots, c_{ar(m)})\not\in A_{i}\cup A_{j}$.

\textit{Inductive step}. Let $I=\{i_{1},\ldots,i_{l+1}\}$, where $l\geq 2$. 
Suppose there is no $i\in I$ s.t. $\bigcup_{j\in (I\setminus \{i\})}A_{j}\subseteq A_{i}$. 

Let $J=I\setminus\{i_{l+1}\}$. We consider two cases: 
\begin{enumerate}\itemsep=0pt
\item there is no $i\in J$ s.t. $\bigcup_{j\in (J\setminus \{i\})}A_{j}\subseteq A_{i}$,
\item there is $i\in J$ s.t. $\bigcup_{j\in (J\setminus \{i\})}A_{j}\subseteq A_{i}$.
\end{enumerate} 

Case 1. By the inductive hypothesis there is no algebra with universe $\bigcup_{i\in J}A_{i}$. Suppose that for $m\leq n$ and $a_{1},\ldots, a_{ar(m)}\allowbreak \in\bigcup_{i\in J}A_{i}$, $F_{m}^{\A}(a_{1},\allowbreak\ldots,\allowbreak a_{ar(m)})\notin\bigcup_{i\in J\setminus\{i_{l+1}\}}A_{i}$.
We consider two cases: 
\begin{enumerate}\itemsep=0pt
\item[1a.] $F_{m}^{\A}(a_{1},\ldots, a_{ar(m)})=a\in A_{i_{l+1}}$, 
\item[1b.] $F_{m}^{\A}(a_{1},\ldots, a_{ar(m)})=a\notin A_{i_{l+1}}$.
\end{enumerate}

Case 1a. Let $b\in\bigcup_{i\in J}A_{i}$ and $b\notin A_{i_{l+1}}$. Let us consider $c_{1},\ldots, c_{ar(m)}\in \{a,b\}$, where $c_{1}=a$ and $c_{2}=b$. By Definition \ref{def:algebra-splitujaca}, $F_{m}^{\A}(c_{1},\ldots, c_{ar(m)})\not\in \bigcup_{i\in I}A_{i}$. Case 1b. Obvious.

Case 2. Suppose for $i_{0}\in J$, $\bigcup_{j\in (J\setminus \{i_{0}\})}A_{j}\subseteq A_{i_{0}}$. Let $a\in A_{i_{l+1}}$ and $a\notin A_{i_{0}}$. Thus, $a\notin \bigcup_{j\in (I\setminus \{i_{l+1}\})}A_{j}$. Moreover, $b\in\bigcup_{j\in (J\setminus \{i_{l+1}\})}A_{j}$ and $b\not\in A_{i_{l+1}}$. Let us consider $c_{1},\ldots, c_{ar(m)}\in \{a,b\}$, where $c_{1}=a$ and $c_{2}=b$. By Definition \ref{def:algebra-splitujaca}, $F_{m}^{\A}(c_{1},\ldots, c_{ar(m)})\not\in \bigcup_{i\in I}A_{i}$.
\end{proof}

In turn, the non-empty intersection of a finite number of isolated algebras is an isolated algebra.

\begin{fa}\label{fa:iloczyn-SP}
Let $\A=\langle A;F_{1}^{\A},\ldots,F_{n}^{\A}\rangle$ be an algebra and $\{\A_{i}\}_{i\in I}$ be a set of algebras isolated wrt $\A$. Then, if $B:=\bigcap_{i\in I}A_{i}\neq\emptyset$, then $\langle B,F^{\A}_{1}\!\!\upharpoonright_{B},\ldots,F^{\A}_{n}\!\!\upharpoonright_{B}\rangle$ is algebra isolated wrt $\A$.
\end{fa}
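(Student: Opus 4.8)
The plan is to verify directly the two clauses of Definition \ref{def:algebra-splitujaca} for the structure $\B := \langle B, F^{\A}_{1}\!\!\upharpoonright_{B},\ldots,F^{\A}_{n}\!\!\upharpoonright_{B}\rangle$, where $B = \bigcap_{i\in I}A_{i}$ is assumed non-empty. That is, I would first show that $\B$ is a subalgebra of $\A$, and then that the isolation condition holds. Throughout, the only input I would use is that each $\A_{i}$ is itself isolated wrt $\A$, which by Definition \ref{def:algebra-splitujaca} supplies two facts about each member: that $A_{i}$ is closed under the operations of $\A$, and that each $A_{i}$ satisfies the isolation condition.

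For the subalgebra clause, I would take any $m\leq n$ and any $a_{1},\ldots,a_{ar(m)}\in B$. Since $B\subseteq A_{i}$ for every $i\in I$, all the arguments lie in each $A_{i}$; as each $\A_{i}$ is a subalgebra of $\A$, we obtain $F_{m}^{\A}(a_{1},\ldots,a_{ar(m)})\in A_{i}$ for every $i\in I$, hence $F_{m}^{\A}(a_{1},\ldots,a_{ar(m)})\in\bigcap_{i\in I}A_{i}=B$. Together with the assumption $B\neq\emptyset$, this shows that $\B$ is a subalgebra of $\A$ and that the restricted operations are well defined, so $\B$ is an algebra of the same type as $\A$.

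For the isolation clause, I would fix $m\leq n$ and $a_{1},\ldots,a_{ar(m)}\in A$ with some $a_{l}\notin B$, aiming at $F_{m}^{\A}(a_{1},\ldots,a_{ar(m)})\notin B$. The key observation is that $a_{l}\notin\bigcap_{i\in I}A_{i}$ yields a single witnessing index $i_{0}\in I$ with $a_{l}\notin A_{i_{0}}$. Applying the isolation condition of $\A_{i_{0}}$ (available since $\A_{i_{0}}$ is isolated wrt $\A$) to the very same arguments gives $F_{m}^{\A}(a_{1},\ldots,a_{ar(m)})\notin A_{i_{0}}$, and since $B\subseteq A_{i_{0}}$ this immediately yields $F_{m}^{\A}(a_{1},\ldots,a_{ar(m)})\notin B$.

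I do not expect a genuine obstacle here: both clauses reduce to transferring the corresponding property from a suitable member of the family. The only point requiring slight care is the asymmetry between the two directions --- closure needs membership of the arguments in \emph{every} $A_{i}$, whereas isolation needs failure of membership in \emph{one} $A_{i_{0}}$ --- but this matches exactly the logical form of intersection (universal for containment, existential for non-containment). Note also that the argument nowhere uses finiteness of $I$, so the conclusion in fact holds for arbitrary non-empty intersections of isolated algebras.
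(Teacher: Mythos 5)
Your proof is correct and follows essentially the same route as the paper's: the paper dismisses closure of $B$ as obvious (which you spell out via $B\subseteq A_{i}$ and closure of each $\A_{i}$) and then argues isolation exactly as you do, picking a witnessing index $i_{0}$ with $a_{l}\notin A_{i_{0}}$ and transferring $F_{m}^{\A}(a_{1},\ldots,a_{ar(m)})\notin A_{i_{0}}$ to $B$. Your closing remark that finiteness of $I$ is never used is a fair observation but does not change the argument.
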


\begin{proof}
Suppose $B:=\bigcap_{i\in J}A_{i}\neq\emptyset$. Obviously, $B$ is closed under operations of $\A$.
Let $m\leq n$ and $a_{1},\ldots,a_{ar(m)}\in A$. Let us assume that for $l\leq ar(m)$, $a_{l}\not\in B$. Thus, there is $i\in I$ s.t. $a_{l}\notin A_{i}$. By Definition \ref{def:algebra-splitujaca}, $F_{m}^{\A}(a_{1},\ldots,a_{ar(m)})\not\in A_{i}$. Therefore, $F_{m}^{\A}(a_{1},\ldots,a_{ar(m)})\not\in B$
\end{proof}

By Fact \ref{fa:odejmowanie-SP} and Fact \ref{fa:suma-SP} we derive the following corollary:

\begin{wn}\label{wn:odejmowanie+suma-SP}
Let $\A=\langle A;F_{1}^{\A},\ldots,F_{n}^{\A}\rangle$ be an algebra and $\{\A_{i}\}_{i\in I}$ be a set of algebras isolated wrt $\A$. Then, for all $i\in I$, $A_{i}\setminus\bigcup\{A_{j}\subseteq A:j\in I\text{ and }A_{j}\subsetneq A_{i}\}\neq\emptyset$ and $A_{i}\setminus\bigcup\{A_{j}\subseteq A:j\in I\text{ and }A_{j}\subsetneq A_{i}\}$ is closed under the operations of $\A$.
\end{wn}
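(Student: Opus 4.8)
The plan is to fix $i\in I$, abbreviate $U:=\bigcup\{A_{j}\subseteq A:j\in I\text{ and }A_{j}\subsetneq A_{i}\}$ for the union of the universes of the isolated algebras lying strictly below $A_{i}$, and treat nonemptiness and closure of $A_{i}\setminus U$ as two separate subgoals. Two facts from the setup will be used throughout: an isolated algebra is a subalgebra of $\A$ (Definition \ref{def:algebra-splitujaca}), so in particular $\A_{i}$ is a subalgebra with (nonempty) universe $A_{i}$; and every member of the defining family of $U$ satisfies $A_{j}\subsetneq A_{i}$, so $U\subseteq A_{i}$.

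For the nonemptiness claim I would argue by contradiction, assuming $A_{i}\setminus U=\emptyset$, which together with $U\subseteq A_{i}$ forces $A_{i}=U$. Here I would split on whether the family $\{A_{j}:j\in I\text{ and }A_{j}\subsetneq A_{i}\}$ has a greatest element. If it has a greatest element $A_{k}$, then $U=A_{k}\subsetneq A_{i}$, contradicting $A_{i}=U$; if the family is empty, then $U=\emptyset\neq A_{i}$; and if the family is nonempty with no greatest element, then it is a set of algebras isolated wrt $\A$ to which Fact \ref{fa:suma-SP} applies, yielding that no subalgebra of $\A$ has universe $U$ — yet $\A_{i}$ is a subalgebra with universe $A_{i}=U$, a contradiction. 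Hence $A_{i}\setminus U\neq\emptyset$ in every case.

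For closure the cleanest route is the isolation property itself. Take $m\leq n$ and $a_{1},\ldots,a_{ar(m)}\in A_{i}\setminus U$; since $\A_{i}$ is a subalgebra, $r:=F_{m}^{\A}(a_{1},\ldots,a_{ar(m)})\in A_{i}$. If $r\in U$, then $r\in A_{j}$ for some $A_{j}\subsetneq A_{i}$, and because $A_{j}$ is isolated wrt $\A$, the contrapositive of Definition \ref{def:algebra-splitujaca} forces every argument $a_{l}\in A_{j}\subseteq U$, contradicting $a_{1}\in A_{i}\setminus U$; thus $r\in A_{i}\setminus U$. In the special case where $\{A_{j}:A_{j}\subsetneq A_{i}\}$ has a greatest element $A_{k}$, the set $A_{i}\setminus U$ is exactly $A_{i}\setminus A_{k}$, and its closure is immediate from Fact \ref{fa:odejmowanie-SP}.

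The hard part will be the case in which the proper isolated subalgebras of $A_{i}$ have no greatest element, so that $U$ is a union of several incomparable isolated algebras. In that situation $U$ is not itself a subalgebra, so Fact \ref{fa:odejmowanie-SP} cannot be invoked to subtract $U$ as a single block; instead Fact \ref{fa:suma-SP} is precisely what rules out $A_{i}=U$, while closure has to be recovered from the isolation property directly rather than from a single set-difference. A minor point worth flagging is the standing assumption that the universes in question are nonempty, which is what disposes of the empty-family subcase in the nonemptiness argument.
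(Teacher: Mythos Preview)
Your proof is correct and follows essentially the same route as the paper: both reduce the nonemptiness claim to Fact~\ref{fa:suma-SP} in the case where no single proper isolated subalgebra dominates the others, and dispose of the remaining cases trivially. The one minor difference is in the closure argument: the paper passes to the maximal elements $A_{j_{1}},\ldots,A_{j_{k}}$ of the family and invokes Fact~\ref{fa:odejmowanie-SP} (implicitly intersecting the closed sets $A_{i}\setminus A_{j_{l}}$), whereas you argue directly from the contrapositive of the isolation condition in Definition~\ref{def:algebra-splitujaca}. Your version is slightly cleaner on that half, since it needs neither the reduction to maximal elements nor the finiteness assumption.
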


\begin{proof}
Let $i\in I$. 
Let us consider the maximal elements of $\{A_{j}\subseteq A:j\in I\text{ and }A_{j}\subsetneq A_{i}\}$ and denote them by $A_{j_{1}},\ldots,A_{j_{k}}$. 
If $k=1$, then we use Fact \ref{fa:odejmowanie-SP}.
If $k\neq 1$, then, by Fact \ref{fa:suma-SP}, $A_{i}\setminus(A_{j_{1}}\cup\ldots\cup A_{j_{k}})\neq\emptyset$ and, by Fact \ref{fa:odejmowanie-SP}, $A_{i}\setminus(A_{j_{1}}\cup\ldots\cup A_{j_{k}})$ is closed under operations of $\A$.     
\end{proof}

We can now show that the set of all isolated algebras forms a join-semilattice.

\begin{fa}\label{fa:spl.pol-krata}
Let $\A=\langle A; F_{1}^{\A},\ldots,F_{n}^{\A}\rangle$ be an algebra and $\{\A_{i}\}_{i\in I}$ be the set of all algebras isolated wrt $\A$. Then, $\langle \{\A_{i}\}_{i\in I},\allowbreak\subseteq\rangle$ is a join-semilattice.
\end{fa}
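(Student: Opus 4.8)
The plan is to show two things: that $\subseteq$ is a partial order on $\{\A_{i}\}_{i\in I}$, and that every pair of isolated algebras has a least upper bound inside this set. The first is immediate, since inclusion is reflexive, antisymmetric, and transitive on any family of sets; so $\langle\{\A_{i}\}_{i\in I},\subseteq\rangle$ is at least a poset and the whole work lies in producing joins.

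The key preliminary observation I would make is that $\A$ itself is isolated wrt $\A$: it is trivially a subalgebra of itself, and the second clause of Definition \ref{def:algebra-splitujaca} is vacuously satisfied because no argument can lie outside $A$. Hence $\A\in\{\A_{i}\}_{i\in I}$ and it is the greatest element with respect to $\subseteq$. Consequently, for any two isolated algebras $\A_{i},\A_{j}$ the family of their common upper bounds,
\[
U:=\{\B\in\{\A_{i}\}_{i\in I} : A_{i}\cup A_{j}\subseteq B\},
\]
is non-empty (it contains $\A$) and, by the standing assumption that only finitely many algebras are isolated wrt $\A$, it is finite.

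The join is then obtained as the intersection $C:=\bigcap_{\B\in U}B$ of all these upper bounds. Since each $B\in U$ contains $A_{i}\cup A_{j}$, so does $C$; in particular $C\supseteq A_{i}\neq\emptyset$, so the intersection is non-empty, and Fact \ref{fa:iloczyn-SP} applies to the finite family $U$ and guarantees that $\langle C, F_{1}^{\A}\!\!\upharpoonright_{C},\ldots,F_{n}^{\A}\!\!\upharpoonright_{C}\rangle$ is itself isolated wrt $\A$, hence a member of $\{\A_{i}\}_{i\in I}$. By construction $A_{i}\subseteq C$ and $A_{j}\subseteq C$, so $C$ is an upper bound of the pair; and because $C$ is the intersection of \emph{all} upper bounds, $C\subseteq B$ for every $B\in U$, so $C$ is the least upper bound. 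Thus $\sup\{\A_{i},\A_{j}\}$ exists and lies in the set, which is exactly what is required for a join-semilattice.

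The step I expect to carry the weight is the verification that $C$ is genuinely an upper bound \emph{and} that it remains an isolated algebra: the latter is precisely where the finiteness assumption on the number of isolated algebras enters, since Fact \ref{fa:iloczyn-SP} is stated for intersections of (finitely many) isolated algebras, and the former relies on the observation that $\A$ is isolated wrt itself so that $U$ is non-empty. Everything else is routine order-theoretic bookkeeping.
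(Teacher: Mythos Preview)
Your proof is correct and follows essentially the same route as the paper: define the join of $\A_{i}$ and $\A_{j}$ as the intersection of all isolated algebras containing $A_{i}\cup A_{j}$, and invoke Fact~\ref{fa:iloczyn-SP} to see that this intersection is again isolated. You are simply more explicit than the paper about the points it leaves implicit---that $\A$ itself is isolated (so the family of upper bounds is non-empty), that the standing finiteness assumption lets Fact~\ref{fa:iloczyn-SP} apply, and that the resulting algebra really is the \emph{least} upper bound.
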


\begin{proof}
For all $i,j\in I$, we put: $\dom(\A_{i}\vee\A_{j})=\bigcap\{A_{k}\subseteq A:k\in I\text{ and }A_{i}\cup A_{j}\subseteq A_{k}\}$ and for every $m\leq n$,
$F^{\A_{i}\vee\A_{j}}_{m}:=F^{\A}_{m}\!\upharpoonright_{\dom(\A_{i}\vee\A_{j})^{ar(m)}}$.
Obviously $\A_{i}\vee \A_{j}$ is an algebra. By Fact \ref{fa:iloczyn-SP}, $\A_{i}\vee \A_{j}$ is isolated wrt $\A$.
\end{proof}

Given the family $\{\A_{i}\}_{i\in I}$ of algebras isolated wrt $\A$ we can define a poset on the set of indexes $I$. Namely, $\I:=\langle I,\leqslant\rangle$, where for any $i,j\in I$: 
\[
i\leqslant j \text{ iff }A_{i}\subseteq A_{j}.
\]
We assume that $i< j$ iff $i\leqslant j$ and $i\neq j$. Of course, if $\{\A_{i}\}_{i\in I}$ is a join-semilattice wrt $\subseteq$, then $\I$ is also a join-semilattice wrt $\leqslant$. 

Let us also note that since we consider finite sets of isolated algebras, the relation defined as follows will be non-empty:
\[
i\prec j\text{ iff }i< j\text{ and there is no $k\in I$ s.t. $i< k< j$},
\]
for any $i,j\in I$.

By Definition \ref{def:system-prosty}, the structure $\langle\I,\{\A_{i}\}_{i\in I}\rangle$, where $\{\A_{i}\}_{i\in I}$, is the set of all algebras isolated wrt a given Płonka sum $\A$, is of course not a direct frame of $\A$ --- family $\{\A_{i}\}_ {i\in I}$ is not composed of algebras with disjoint universes.
However, as we will show in sequel, for each direct frame $\langle J,\{\B_{j}\}_{j\in J} \rangle$ of a given Płonka sum, the structure $\langle J,\{\B_{j }^{+}\}_{j\in J} \rangle$ is isomorphic to some substructure of $\langle\I,\{\A_{i}\}_{i\in I}\rangle$. Now we will define the substructures that interest us.

\begin{df}\label{def:SAF}
Let $\A=\langle A; F_{1}^{\A},\ldots, F_{n}^{\A}\rangle$ be an algebra and $\{\A_{i}\}_{i\in I}$ be a set of all algebras isolated wrt $\A$. A \textit{frame of algebras isolated wrt $\A$} is an ordered pair $\langle \J,\{\A_{j}\}_{j\in J}\rangle$ s.t. the following conditions are satisfied:
\begin{enumerate}\itemsep=0pt
\item[F1.] $J\subseteq I$ and $\{\A_{j}\}_{j\in J}$ is a non-trivial join-semilattice wrt $\subseteq$,
\item[F2.] $\max I\in J$, 
\item[F3.] for any $j\in I$, if there is $m\in\mathbb{N}$ s.t. $j_{1},\allowbreak\ldots,\allowbreak j_{m}\in J$ and $A_{j}=A_{j_{1}}\cap\ldots\cap A_{j_{m}}$, then $j\in J$. 
\end{enumerate}
\end{df}

The indicated conditions F2 and F3 guarantee that the frame of isolated algebras has the following properties:

\begin{fa}\label{fa:SAF-wlasnosci}
Let $\A=\langle A;F_{1}^{\A},\ldots, F_{n}^{\A}\rangle$ be a Płonka sum and $\langle\I,\{\A_{i}\}_{i\in I}\rangle$ be a frame of algebras isolated wrt $\A$. Then:
\begin{enumerate}\itemsep=0pt
\item[1.] $\bigcup_{i\in I}A_{i}=A$,
\item[2.] for all $i,j\in I$, if $i\neq j$ and $i,j$ are minimal elements of $\I$, then $A_{i}\cap A_{j}=\emptyset$
\item[3.] for every $a\in A$, set $\{i\in I:a\in A_{i}\}$ is non-empty and contains the least element.
\end{enumerate}
\end{fa}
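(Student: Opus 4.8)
The plan is to reduce all three items to a single structural observation about frames of isolated algebras, namely that such a frame is closed under nonempty finite intersections. Concretely, if $A_{i_{1}},\ldots,A_{i_{m}}$ are members of the frame with $A_{i_{1}}\cap\ldots\cap A_{i_{m}}\neq\emptyset$, then by Fact \ref{fa:iloczyn-SP} this intersection is the universe of an algebra isolated wrt $\A$, so it equals $A_{k}$ for some index $k$ in the family of \emph{all} isolated algebras; condition F3 of Definition \ref{def:SAF} then pulls $k$ back into the frame. I would state this as an auxiliary observation up front, since items 2 and 3 both rest on it. For item 1, I would first note that $\A$ is isolated wrt itself: it is trivially a subalgebra, and the isolation clause of Definition \ref{def:algebra-splitujaca} is vacuous when the subuniverse equals $A$, since no argument can lie outside $A$. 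Hence $\A$ is the greatest isolated algebra (every isolated algebra is a subalgebra, so its universe is contained in $A$), and by condition F2 its index $\max I$ belongs to the frame, giving $A_{\max I}=A$. Together with $\bigcup_{i\in I}A_{i}\subseteq A$ (each $\A_{i}$ is a subalgebra), this yields $\bigcup_{i\in I}A_{i}=A$.

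For item 2, I would argue by contradiction. Assume $i\neq j$ are minimal in $\I$ yet $A_{i}\cap A_{j}\neq\emptyset$. By the auxiliary observation there is $k\in I$ with $A_{k}=A_{i}\cap A_{j}$, so $A_{k}\subseteq A_{i}$ and $A_{k}\subseteq A_{j}$, that is $k\leqslant i$ and $k\leqslant j$. Minimality of $i$ excludes $k<i$, forcing $k=i$, and likewise $k=j$, so $i=j$, contradicting $i\neq j$. Therefore $A_{i}\cap A_{j}=\emptyset$.

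For item 3, nonemptiness of $\{i\in I:a\in A_{i}\}$ is immediate from item 1, since $a\in A=\bigcup_{i\in I}A_{i}$. For the least element, set $S_{a}=\{i\in I:a\in A_{i}\}$; this index set is finite by our standing assumption that only finitely many isolated algebras are considered, and $a$ lies in every $A_{i}$ with $i\in S_{a}$, so $\bigcap_{i\in S_{a}}A_{i}$ contains $a$ and is nonempty. By the auxiliary observation there is $i_{0}\in I$ with $A_{i_{0}}=\bigcap_{i\in S_{a}}A_{i}$; then $a\in A_{i_{0}}$ gives $i_{0}\in S_{a}$, while $A_{i_{0}}\subseteq A_{i}$ for every $i\in S_{a}$ gives $i_{0}\leqslant i$ throughout $S_{a}$, so $i_{0}$ is the least element of $S_{a}$. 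The individual arguments are short; the step demanding the most care is the auxiliary observation, since that is exactly where Fact \ref{fa:iloczyn-SP} and condition F3 must be combined correctly --- one must verify nonemptiness of the intersection before invoking Fact \ref{fa:iloczyn-SP}, and then check that F3 returns the intersection into the frame rather than merely into the family of all isolated algebras. The finiteness of the index set is what guarantees the intersection in item 3 is genuinely a finite one, so that F3 applies verbatim.
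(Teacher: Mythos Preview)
Your proposal is correct and follows essentially the same approach as the paper: both use F2 for item~1, and both combine Fact~\ref{fa:iloczyn-SP} with F3 to handle items~2 and~3 via the closure of the frame under nonempty intersections. Your write-up is in fact more explicit than the paper's in two places --- you spell out why $\A$ itself is isolated (making $A_{\max I}=A$ transparent), and in item~2 you track the case $k=i$ or $k=j$ carefully rather than jumping straight to $k<i,j$ --- but these are expository refinements, not a different argument.
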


\begin{proof}
Ad 1. By F2, $\bigcup_{i\in I}A_{i}=A_{\max I}=A$.

Ad 2. Suppose that $i\neq j$ and $i,j$ are minimal elements of $\I$. Assume indirectly that $A_{i}\cap A_{j}\neq\emptyset$. 
By Fact \ref{fa:iloczyn-SP}, there is an algebra isolated wrt $\A$ with the universe $A_{i}\cap A_{j}$. Assume its index is $k$. By F3, $k\in I$. Then, by the definition of $\leqslant$, $k< i,j$.

Ad 3. Let $a\in A$. By Fact \ref{fa:iloczyn-SP}, $\bigcap\{A_{i}\subseteq A:i\in I\text{ and }a\in A_{i}\}$ is an algebra and, by the definition of $\leqslant$ and and Definition \ref{def:SAF}, its index is $\min\{i\in I:a\in A_{i}\}$.
\end{proof}

As we will see, frames of isolated algebras allow us to describe all direct systems of a given sum. For now, let us note that each frame of algebras isolated wrt a Płonka sum determines this Płonka sum.

\begin{fa}
Let $\A=\langle A;F_{1}^{\A},\ldots,\allowbreak F_{n}^{\A}\rangle$ be a Płonka sum and $\langle\I,\{\A_{i}\}_{i\in I}\rangle$ be a frame of algebras isolated wrt $\A$. Let $\B=\langle \bigcup_{i\in I}A_{i};F_{1}^{\B},\ldots,F_{n}^{\B}\rangle$ be an algebra s.t. for all $m\leq n$, for all $a_{1},\ldots,a_{ar(m)}\in A$:
\[
F^{\B}_{m}(a_{1},\ldots,a_{ar(m)}):=F_{m}^{\A_{j}}(a_{1},\ldots,a_{ar(m)}),
\]
where for all $l\leq ar(m)$, $i_{l}=\min\{i\in I:a_{l}\in A_{i}\}$ and $j=i_{1}\vee\ldots\vee i_{ar(m)}$. 
Then: 
\[
\A=\B.
\]
\end{fa}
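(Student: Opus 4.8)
The plan is to show $\A=\B$ by verifying that the two algebras have the same universe and that every fundamental operation agrees on every input tuple. The universe claim is immediate from Fact \ref{fa:SAF-wlasnosci}.1, which gives $\bigcup_{i\in I}A_{i}=A$, so both algebras are defined on $A$. It remains to prove that for every $m\leq n$ and every tuple $a_{1},\ldots,a_{ar(m)}\in A$ we have $F_{m}^{\A}(a_{1},\ldots,a_{ar(m)})=F_{m}^{\B}(a_{1},\ldots,a_{ar(m)})$.

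The key observation is that since $\A$ is a Płonka sum, it comes equipped with some direct system $\DS=\langle\mathcal{D},\{\C_{d}\}_{d\in D},\{\psi_{d,e}\}\rangle$ with $\A=S(\DS)$; I would fix such a system and express the computation of $F_{m}^{\A}$ through Definition \ref{def:suma-Plonki}. For each argument $a_{l}$, let $i_{l}=\min\{i\in I:a_{l}\in A_{i}\}$ be the least isolated algebra containing it, which exists by Fact \ref{fa:SAF-wlasnosci}.3, and set $j=i_{1}\vee\ldots\vee i_{ar(m)}$. By the definition of $\B$, the value $F_{m}^{\B}(a_{1},\ldots,a_{ar(m)})$ is computed as $F_{m}^{\A_{j}}(a_{1},\ldots,a_{ar(m)})$, i.e. by applying the ambient operation of $\A$ to the arguments \emph{without} first moving them anywhere, but restricted to the isolated algebra $\A_{j}$. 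Since $\A_{j}$ is a subalgebra of $\A$ (Definition \ref{def:algebra-splitujaca}), this restriction of $F_{m}^{\A}$ agrees with $F_{m}^{\A}$ whenever $a_{1},\ldots,a_{ar(m)}\in A_{j}$. So the crux is to show that all arguments already lie in $A_{j}$ and that $A_{j}$ is precisely the least isolated algebra in which the $\A$-value of the operation lands.

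The main obstacle, and the step I would spend the most care on, is relating the join $j=i_{1}\vee\ldots\vee i_{ar(m)}$ taken in the semilattice of isolated algebras (Fact \ref{fa:spl.pol-krata}) to the join $m_{0}=d_{1}\vee\ldots\vee d_{ar(m)}$ taken in the index semilattice of the background direct system, where $a_{l}\in C_{d_{l}}$. The bridge should be the correspondence $\A^{+}_{d}=\bigcup_{e\leqslant d}C_{e}$ of Fact \ref{fa:DS->Spl}: each $\A^{+}_{d}$ is isolated wrt $\A$ (part 1), and the map $d\mapsto\A^{+}_{d}$ is an order isomorphism onto its image (parts 2 and 5). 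I would argue that the least isolated algebra containing $a_{l}$ is exactly $\A^{+}_{d_{l}}$, so that $i_{l}$ corresponds to $d_{l}$; then the join $j$ of isolated algebras corresponds, via this isomorphism together with Fact \ref{fa:DS->Spl}.6 and the definition of $\vee$ in Fact \ref{fa:spl.pol-krata}, to $\A^{+}_{m_{0}}$. Once $A_{j}=A^{+}_{m_{0}}$ is established, the arguments $a_{l}\in C_{d_{l}}\subseteq A^{+}_{m_{0}}=A_{j}$ all lie in $A_{j}$, so $F_{m}^{\A_{j}}$ applied to them equals $F_{m}^{\A}$ applied to them.

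Finally, I would close the loop by checking that the Płonka-sum value computed in $\DS$ also lands in $A_{j}$ and is computed correctly: by Definition \ref{def:suma-Plonki}, $F_{m}^{\A}(a_{1},\ldots,a_{ar(m)})=F_{m}^{\C_{m_{0}}}(\psi_{d_{1},m_{0}}(a_{1}),\ldots)$ lives in $C_{m_{0}}\subseteq A^{+}_{m_{0}}=A_{j}$, and since $A_{j}$ is closed under the operations of $\A$ (being a subalgebra), evaluating $F_{m}^{\A}$ directly on $a_{1},\ldots,a_{ar(m)}\in A_{j}$ gives the same element as evaluating $F_{m}^{\A_{j}}$, which is by definition $F_{m}^{\B}(a_{1},\ldots,a_{ar(m)})$. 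Thus $F_{m}^{\A}$ and $F_{m}^{\B}$ agree on every tuple, and therefore $\A=\B$. The one routine point to record along the way is that the value of $F_{m}^{\B}$ does not depend on the auxiliary direct system $\DS$ chosen, which follows because the indices $i_{l}$ and the join $j$ are defined purely in terms of the intrinsic family of isolated algebras.
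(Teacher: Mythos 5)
There is a genuine gap, and it sits exactly at the step you yourself identify as the crux. (For the record: the paper states this fact without any proof, so the comparison can only be against the natural intended argument.) Your plan is to fix an arbitrary background direct system $\DS$ with $\A=S(\DS)$ and to identify the frame indices with the system indices via the claim that ``the least isolated algebra containing $a_{l}$ is exactly $\A^{+}_{d_{l}}$'', concluding $A_{j}=A^{+}_{m_{0}}$. That claim is false, for two independent reasons. First, $i_{l}$ is a minimum taken inside the frame $I$, and a frame need not contain any of the algebras $\A^{+}_{d}$ of the system you fixed. Second, even the least among \emph{all} isolated algebras containing $a_{l}$ can differ from $\A^{+}_{d_{l}}$: take $\A_{4}$ from Example \ref{Przyklad4}, whose direct system has $\dom(\C_{1})=\{a\}$, $\dom(\C_{2})=\{b,c,d\}$, and consider the element $b$. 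Then $b\in C_{2}$ and $C_{2}^{+}=C_{1}\cup C_{2}=\{a,b,c,d\}$, whereas the least isolated algebra containing $b$ is $\B_{2}$ with universe $\{a,b\}$. Consequently, for the (legitimate) frame consisting of $\B_{2}$ and $\B_{3}$ and the tuple $(b,b)$ one gets $A_{j}=\{a,b\}\neq\{a,b,c,d\}=A^{+}_{m_{0}}$. Worse, Example \ref{Przyklad4} shows this frame carries no sound set of P-homomorphisms at all, so no choice of background system can restore your correspondence: any proof that works by matching the frame against a direct system is structurally doomed, since the fact must hold for frames that correspond to no direct system whatsoever.

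The irony is that the conclusion you route through this false bridge is immediate, and your own second paragraph already contains the correct argument. By Fact \ref{fa:SAF-wlasnosci}.3 the minimum $i_{l}=\min\{i\in I:a_{l}\in A_{i}\}$ exists, and by the very definition of this minimum $a_{l}\in A_{i_{l}}$; since $i_{l}\leqslant j$ and $\leqslant$ on the index set is, by definition, inclusion of universes, we get $A_{i_{l}}\subseteq A_{j}$, hence every argument lies in $A_{j}$. Because $\A_{j}$ is a subalgebra of $\A$ (Definition \ref{def:algebra-splitujaca}), $F_{m}^{\A_{j}}$ is the restriction of $F_{m}^{\A}$ to $A_{j}^{ar(m)}$, so
\[
F_{m}^{\B}(a_{1},\ldots,a_{ar(m)})=F_{m}^{\A_{j}}(a_{1},\ldots,a_{ar(m)})=F_{m}^{\A}(a_{1},\ldots,a_{ar(m)}),
\]
and together with $\bigcup_{i\in I}A_{i}=A$ (Fact \ref{fa:SAF-wlasnosci}.1) this yields $\A=\B$. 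No background direct system is needed anywhere; the Płonka-sum hypothesis enters only through Fact \ref{fa:SAF-wlasnosci}. Deleting your detour through $\DS$ and replacing it by these two lines is what turns the proposal into a correct proof.
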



We will now show that the algebras obtained according to the definition of \eqref{def:Spl+} from the direct system form an isomorphic structure with some frame of isolated algebras.

\begin{fa}\label{fa:DS=SAF}
Let $\DS=\langle \langle I,\leqslant\rangle,\{\A_{i}\}_{i\in I},\{\varphi_{i,j}\}_{i\leqslant j;i,j\in I}\rangle$ be a direct system and $\langle \langle J,\sqsubseteq\rangle,\{\B_{j}\}_{j\in J}\rangle$ be a frame of algebras isolated wrt $S(\DS)$, where $\{\B_{j}\}_{j\in J}$ is the set of all algebras isolated wrt $S(\DS)$. Then:
\begin{enumerate}\itemsep=0pt
    
\item $f:I\fun J$ s.t. $f(i):=j$, where $A_{i}^{+}=B_{j}$, is injection,

\item $\langle\langle f(I),\sqsubseteq\rangle,\{\B_{k}\}_{k\in f(I)}\rangle$ and $\langle \langle I,\leqslant\rangle,\{\A_{i}^{+}\}_{i\in I}\rangle$ are isomorphic, 

\item $\langle\langle f(I),\sqsubseteq\rangle,\{\B_{k}\}_{k\in f(I)}\rangle$ is a frame of algebras isolated wrt $S(\DS)$.
\end{enumerate}
\end{fa}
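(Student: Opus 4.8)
The plan is to use $f$ itself as the structure map in all three parts, verifying each claim from the properties of the algebras $A_i^+$ collected in Fact~\ref{fa:DS->Spl}. \emph{For part~1}, I would first check that $f$ is well defined: by point~1 of Fact~\ref{fa:DS->Spl} each $\A_i^+$ is isolated wrt $S(\DS)$, and since $\{\B_j\}_{j\in J}$ enumerates \emph{all} isolated algebras (with distinct indices naming distinct algebras, as $\sqsubseteq$ is a partial order), there is a unique $j\in J$ with $A_i^+=B_j$; this $j$ is $f(i)$. For injectivity, $f(i)=f(i')$ forces $A_i^+=B_{f(i)}=A_{i'}^+$, whence $i=i'$ by point~5 of Fact~\ref{fa:DS->Spl}.

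\emph{For part~2}, I would show that $f$ is the desired isomorphism. By part~1 the map $f\colon I\fun f(I)$ is a bijection, and $A_i^+=B_{f(i)}$ by construction, so $\{\A_i^+\}_{i\in I}$ is carried exactly onto $\{\B_k\}_{k\in f(I)}$. Since $\sqsubseteq$ is inclusion of universes, the chain $i\leqslant i' \iff A_i^+\subseteq A_{i'}^+ \iff B_{f(i)}\subseteq B_{f(i')} \iff f(i)\sqsubseteq f(i')$, whose first equivalence is point~2 of Fact~\ref{fa:DS->Spl}, shows that $f$ both preserves and reflects the order. Hence the two structures are isomorphic.

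\emph{For part~3}, I would verify F1--F3 of Definition~\ref{def:SAF} for $\langle\langle f(I),\sqsubseteq\rangle,\{\B_k\}_{k\in f(I)}\rangle$. For F1, $f(I)\subseteq J$ is clear, and the family is a join-semilattice because for $i,i'\in I$ the element $\A_{i\vee i'}^+$ lies in it and, by point~2 of Fact~\ref{fa:DS->Spl}, is the least upper bound of $\A_i^+,\A_{i'}^+$ wrt $\subseteq$; non-triviality follows from $|I|\geq 2$ via the injectivity of $f$. For F2, point~3 of Fact~\ref{fa:DS->Spl} gives $i_0$ with $\A_{i_0}^+=S(\DS)$, and since $S(\DS)$ is the greatest isolated algebra, $f(i_0)=\max J\in f(I)$. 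For F3, if $B_j=B_{k_1}\cap\dots\cap B_{k_m}$ with each $k_l\in f(I)$, writing $B_{k_l}=A_{i_l}^+$ gives $B_j=A_{i_1}^+\cap\dots\cap A_{i_m}^+\neq\emptyset$, so point~6 of Fact~\ref{fa:DS->Spl} produces $i\in I$ with $A_i^+=B_j$, i.e.\ $j=f(i)\in f(I)$ (the case $m=1$ being immediate).

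The main obstacle is condition F3: it is the only requirement whose verification is not purely formal, and it is exactly the place where closure of the image family under the intersections used to build frames is needed --- which is precisely what point~6 of Fact~\ref{fa:DS->Spl} was set up to supply. A secondary point requiring care is the non-triviality clause of F1, which relies on the standing assumption that the direct system has at least two indices; I would flag the degenerate single-index case, where $f(I)$ collapses to a singleton, as the reason this hypothesis cannot be dropped.
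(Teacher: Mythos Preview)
Your proof is correct and follows essentially the same route as the paper's, invoking the properties of the algebras $A_i^+$ from Fact~\ref{fa:DS->Spl} at each of the three steps. Your treatment is in fact the more careful one: you cite the correct items of Fact~\ref{fa:DS->Spl} (the paper's references to points~3 and~5 in parts~2 and~3 appear to be slips for points~2 and~6), and you rightly flag that the non-triviality clause of~F1 needs $|I|\geq 2$, a hypothesis the paper leaves implicit.
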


\begin{proof}
Ad 1. By Fact \ref{fa:DS->Spl}.1 and Definition \ref{def:system-prosty}, for any $i\in I$, there is exactly one $j\in J$ s.t. $A_{i}^{+}=B_{j}$. Due to the disjoint universes of algebras from $\{A_{i}\}_{i\in I}$, $f$ is an injection. 

Ad 2. By 1, function $g\colon I\fun f(I)$ s.t. $g(i):=f(i)$ is a bijection. Let $i,j\in I$. We have then: $i\leqslant j$, by Fact \ref{fa:DS->Spl}.3, iff $A^{+}_{i}\subseteq A^{+}_{j}$ iff $A_{f(i)}\subseteq A_{f(j)}$, by the definition of partial order on the set of indexes, iff $f(i)\sqsubseteq f(j)$. 

Ad 3. Conditions F1 and F2 are satisfied in an obvious way.    
By Fact \ref{fa:DS->Spl}.5 F3 is also satisfied. 
\end{proof}

Algebras from the frame of isolated algebras are not disjoint. In order to obtain structures composed of algebras with disjoint universes, we will consider the set difference of algebras from the frame and its proper subalgebras. 

Let $\A=\langle A; F_{1}^{\A},\ldots,F_{n}^{\A}\rangle$ be an algebra and $\langle \I,\{\A_{i}\}_{i\in I}\rangle$ be a frame of algebras isolated wrt $\A$. For any $i \in I$, we put: 
\[
\label{def:-A}\tag{$\ddagger$}A_{i}^{-}:= A_{i}\setminus\bigcup_{j<i}A_{j}.   
\]
Note that by Corollary \ref{wn:odejmowanie+suma-SP}, for any $i\in I$, $A_{i}^{-}\neq\emptyset$ and $A_{i}^{- }$ is closed under the operations of a given algebra $\A$. 
For any $i\in I$, an algebra of the form:
\[
\A_{i}^{-}=\langle A_{i}^{-}; F_{1}^{\A_{i}^{-}},\allowbreak \ldots,F_{n}^{\A_{i}^{-}}\rangle
\]
we call a \textit{complement algebra} (\textit{of $\A_{i}$}).

The concept of the family of complement algebras is not sufficient to decompose the Płonka sum. Two problems arise. The first is that we have not specified how we should determine the values of operations on arguments from different algebras of this family of complement algebras. The second one is related to the problem of the existence of homomorphisms (see Appendix, Example \ref{Przyklad4}) --- the definition of which is required by the definition of the Płonka sum. 
We will solve both problems in the next section

\subsection{Płonka Homomorphism}  
 
Płonka's construction requires the existence of homomorphisms from algebras of a given index to algebras of a `greater' index, where the algebras have disjoint universes. 
As shown in Example \ref{Przyklad4} (see \hyperlink{Appendix}{Appendix}), there is not always a homomorphism from a complement algebra to a complement algebra with a `greater' index. 
Moreover, not every homomorphism from a complement algebra to a complement algebra is a homomorphism suitable for determining the operation of a Płonka sum in the way specified in Definition \ref{def:suma-Plonki}. Such a situation is shown in Example \ref{Przyklad5} (see \hyperlink{Appendix}{Appendix}). 

Thus, we can conclude that having a given algebra that is a Płonka sum and trying to identify the respective homomorphisms between complement algebras, we distinguish three possibilities. The first is when homomorphisms do not exist. Such a situation is presented in Example \ref{Przyklad4}. The second is when homomorphisms exist, but they do not satisfy the condition given in Definition \ref{def:suma-Plonki}. Such a situation is described in Example \ref{Przyklad5}. Finally, the third, when homomorphisms exist and they allow us to determine the operations of the Płonka sum according to the mentioned condition. Such a situation has been presented in Examples \ref{Przyklad4} and \ref{Przyklad5} (also in Examples \ref{Przyklad1} and \ref{Przyklad2}).

Below, we introduce the general notion of a Płonka homomorphism, which will allow us to identify all homomorphisms of the third possibility. 

\begin{df}\label{def:P-homomorfizm}
Let $\A=\langle A; F_{1}^{\A},\ldots,F_{n}^{\A}\rangle$ be an algebra, a family $\{\A_{i}\}_{i\in I}$ of subalgebras of $\A$ indexed by a set $I$ 
partially ordered  by $\leqslant$, 
and let $i,j\in I$, where $i\leqslant j$. 
A function $\varphi\in\Hom(\A_{i},\A_{j})$ is a  \emph{Płonka homomorphism} (\emph{P-homomorphism}) \emph{wrt $\A$} iff one of the following cases holds:
\begin{itemize}
\item $i=j$ and $\h$ is an identity homomorphism, 
\item $i<j$ and for any $m\leq n$ and $a_{1},\ldots,a_{ar(m)}\in A$:
\[
F^{\A}_{m}(a_{1},\ldots,\allowbreak a_{ar(m)})=F^{\A}_{m}(f_{\varphi}(a_{1}),\ldots,\allowbreak f_{\varphi}(a_{ar(m)})),
\]
if $F_{m}^{\A}(a_{1},\ldots, a_{ar(m)})\in \bigcup_{j\leqslant k}A_{k}$.
\end{itemize}
\end{df}

\n The set of all P-homomorphisms wrt $\A$ between algebras $\A_{i}$ and $\A_{j}$ is denoted by $\PHom^{\A}(\A_{i},\A_{j})$. 

Before we proceed with further analysis of a P-homomorphism, let us make some observations about the antecedent of the defining component of P-homomorphism in the context of complement algebras. 
\begin{fa}\label{fa:warunki}
Let $\A=\langle A; F_{1}^{\A},\ldots,F_{m}^{\A}\rangle$ be an algebra and let $\langle \I,\{\A_{i}\}_{i\in I}\rangle$ be a frame of algebras isolated wrt $\A$. Then, for all $m\leq n$, for all $a_{1},\ldots,a_{ar(m)}\in A$, if there is $l\leq ar(m)$ s.t. $a_{l}\in\bigcup_{i\leqslant j}A_{j}^{-}$, then $F_{m}^{\A}(a_{1},\ldots, a_{ar(m)})\in \bigcup_{i\leqslant j}A_{j}^{-}$.
\end{fa}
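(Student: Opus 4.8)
The plan is to exploit two facts: that the complement sets $\{A_j^-\}_{j\in I}$ defined in \eqref{def:-A} partition the universe $A$, and the contrapositive of the isolation condition from Definition \ref{def:algebra-splitujaca}. Throughout, $i$ is the fixed (arbitrary) index over which the union $\bigcup_{i\leqslant j}A_j^-$ is taken.

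First I would record the partition observation. By Fact \ref{fa:SAF-wlasnosci}.1 we have $\bigcup_{k\in I}A_k=A$, and by Fact \ref{fa:SAF-wlasnosci}.3 every $a\in A$ determines a least index $j_a:=\min\{k\in I:a\in A_k\}$. A short computation from \eqref{def:-A} then shows $a\in A_{j_a}^-$ and that $a$ lies in no other $A_j^-$ (if $j_a<j$ then $a\in\bigcup_{k<j}A_k$, so $a\notin A_j^-$). Hence the sets $A_j^-$ are pairwise disjoint and cover $A$, and for any $a$ the membership $a\in\bigcup_{i\leqslant j}A_j^-$ is equivalent to $i\leqslant j_a$.

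Next, for the main claim, fix $m\leq n$ and $a_1,\dots,a_{ar(m)}\in A$, and suppose $a_l\in A_{j_0}^-$ for some $j_0$ with $i\leqslant j_0$. Put $b:=F_m^{\A}(a_1,\dots,a_{ar(m)})$ and let $j_b$ be the unique index with $b\in A_{j_b}^-$; in particular $b\in A_{j_b}$. Since $\A_{j_b}$ is isolated wrt $\A$ and the value $b=F_m^{\A}(a_1,\dots,a_{ar(m)})$ lies in $A_{j_b}$, the contrapositive of the second clause of Definition \ref{def:algebra-splitujaca} forces every argument into $A_{j_b}$; in particular $a_l\in A_{j_b}$. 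Because $a_l\in A_{j_0}^-$, the index $j_0$ is the least index of an isolated algebra containing $a_l$, so $a_l\in A_{j_b}$ yields $j_0\leqslant j_b$. Combining with $i\leqslant j_0$ gives $i\leqslant j_b$, whence $b\in A_{j_b}^-\subseteq\bigcup_{i\leqslant j}A_j^-$, as required.

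The argument is largely bookkeeping; the one step that must be handled with care is the correct use of the isolation condition in its contrapositive form, i.e. reading ``if some argument leaves $A_{j_b}$ then the value leaves $A_{j_b}$'' as ``if the value stays in $A_{j_b}$ then all arguments stay in $A_{j_b}$''. The other point worth stating explicitly is that $j_b$ is a genuine frame index (supplied by Fact \ref{fa:SAF-wlasnosci}.3), so that $A_{j_b}$ really is one of the isolated algebras to which Definition \ref{def:algebra-splitujaca} applies.
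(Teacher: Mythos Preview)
Your proof is correct and follows essentially the same approach as the paper: both arguments rest on the observation that the sets $A_j^-$ partition $A$ and on the contrapositive of the isolation condition applied to the block containing $F_m^{\A}(a_1,\dots,a_{ar(m)})$. Your version is a slightly more streamlined direct argument (going from $b\in A_{j_b}$ to $a_l\in A_{j_b}$ to $j_0\leqslant j_b$), whereas the paper reaches the same conclusion by contradiction after first proving the auxiliary claim that $a_{l_0}\notin A_j$ whenever $j\not\geqslant i$.
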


\begin{proof}
Let $m\leq n$ and $a_{1},\ldots,a_{ar(m)}\in A$. Assume that for $l_{0}\leq ar(m)$ we have $a_{l_{0}}\in\bigcup_{i\leqslant j}A_{j}^{-}$. Let $a_{l_{0}}\in A_{j_{0}}^{-}$, where $i\leqslant j_{0}$. Thus, by \eqref{def:-A}, if $j<i$, then $a_{l_{0}}\notin A_{j}$. Similarly, if $j\nleqslant i\nleqslant j$, then $a_{l_{0}}\notin A_{j}$. Assume that $j_{1}\nleqslant i\nleqslant j_{1}$ and $a_{l_{0}}\in A_{j_{1}}$. Hence, $j_{0}\nleqslant j_{1}$, $j_{1}\nleqslant j_{0}$ and $a_{l_{0}}\in A_{j_{0}}\cap A_{j_{1}}$. Since $a_{l_{0}}\in A_{j_{0}}\cap A_{j_{1}}$, so by Fact  \ref{fa:iloczyn-SP}, we have $j_{2}\in I$ s.t. $A_{j_{2}}=A_{j_{0}}\cap A_{j_{1}}$. However, $j_{2}<j_{0}$ and $a_{l_{0}}\in A_{j_{2}}$. Therefore, $a_{l_{0}}\not\in A_{j_{0}}^{-}$.

Assume that $F_{m}^{\A}(a_{1},\ldots, a_{ar(m)})\notin \bigcup_{i\leqslant j}A_{j}^{-}$. Then, by \eqref{def:-A}, for any $j\in I$, if $i\leqslant j$, then $F_{m}^{\A}(a_{1},\ldots, a_{ar(m)})\notin A_{j}^{-}$. However, there is  $j_{3}\in I$ s.t. $F_{m}^{\A}(a_{1},\ldots, a_{ar(m)})\in A_{j_{3}}^{-}$, hence also $F_{m}^{\A}(a_{1},\ldots, a_{ar(m)})\in A_{j_{3}}$. So, $j_{3}<i$ or  $j_{3}\nleqslant i\nleqslant j_{3}$. Then, $a_{l_{0}}\notin A_{j_{3}}$. Consequently, by Definition \ref{def:algebra-splitujaca}, $F_{m}^{\A}(a_{1},\ldots, a_{ar(m)})\notin A_{j_{3}}$.
\end{proof}

In general, the converse implication of the implication of Fact \ref{fa:warunki} doesn't have to hold.

Generalizing Example \ref{Przyklad-P-hom}, we can say that any homomorphism of a direct system is a P-homomorphism wrt the Płonka sum defined this direct system.

\begin{fa}\label{fa:varphi_ij->P-hom}
Let $\mathcal{A}=\langle\I,\{\A_{i}\}_{i\in I},\{\h_{i,j}\}_{i\leqslant j;i,j\in I}\rangle$ be a direct system. Then, for all $i,j\in I$, if $i\leqslant j$, then $\h_{i,j}$ is a P-homomorphism wrt $S(\DS)$.
\end{fa}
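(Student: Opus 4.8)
The plan is to verify directly the two defining clauses of a P-homomorphism (Definition \ref{def:P-homomorfizm}) for $\varphi=\h_{i,j}$, taking $\A=S(\DS)$ and the family $\{\A_i\}_{i\in I}$ of the direct system as the relevant family of subalgebras. First I would note that this family does fit the hypotheses of Definition \ref{def:P-homomorfizm}: each $\A_i$ is a subalgebra of $S(\DS)$, since by Definition \ref{def:suma-Plonki} any fundamental operation applied to arguments all lying in a single $A_i$ has join of indices equal to $i$ and hence stays in $A_i$ (using $\varphi_{i,i}=\id_{A_i}$); and $I$ carries the semilattice order $\leqslant$. The case $i=j$ is immediate, because $\h_{i,i}=\id_{A_i}$ is the identity homomorphism, matching the first clause. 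So the whole content lies in the case $i<j$.

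Fix $i<j$, an operation index $m\leq n$, and arguments $a_1,\ldots,a_{ar(m)}\in A$ with $a_l\in A_{i_l}$, and assume the hypothesis of the second clause, namely $F_m^{S(\DS)}(a_1,\ldots,a_{ar(m)})\in\bigcup_{j\leqslant k}A_k$. By Definition \ref{def:suma-Plonki} this value equals $F_m^{\A_p}(\h_{i_1,p}(a_1),\ldots,\h_{i_{ar(m)},p}(a_{ar(m)}))$ with $p=i_1\vee\ldots\vee i_{ar(m)}$, and it lies in $A_p$. Since the universes $\{A_k\}_{k\in I}$ are pairwise disjoint, membership in $\bigcup_{j\leqslant k}A_k$ forces $j\leqslant p$. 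Next I would analyze the transported arguments $f_{\h_{i,j}}(a_l)$: by the definition of $f_\varphi$ and disjointness of universes, $f_{\h_{i,j}}(a_l)=\h_{i,j}(a_l)\in A_j$ exactly when $i_l=i$, and $f_{\h_{i,j}}(a_l)=a_l\in A_{i_l}$ otherwise. Thus the new tuple of indices is obtained from $i_1,\ldots,i_{ar(m)}$ by replacing every occurrence of $i$ with $j$, and its join is $p'=j\vee\bigvee_{l:i_l\neq i}i_l$ (with $p'=j$ if every $i_l=i$).

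The main obstacle, and the crux of the argument, is to show $p'=p$ so that both sides are evaluated in the same algebra $A_p$. Writing $q=\bigvee_{l:i_l\neq i}i_l$ when that join is nonempty, I would use $i\leqslant j\leqslant p$ and $q\leqslant p$ to get $p'=j\vee q\leqslant p$, and $p=i\vee q\leqslant j\vee q=p'$, whence $p'=p$; the degenerate subcase in which every $i_l=i$ (so $p=i$) is ruled out, since then $j\leqslant p=i$ would contradict $i<j$, so the hypothesis already guarantees some $i_l\neq i$. Finally, with $p'=p$ established, I would compare the two tuples fed into $F_m^{\A_p}$ coordinatewise using the cocycle condition of Definition \ref{def:system-prosty}: for $l$ with $i_l=i$ the transported entry contributes $\h_{j,p}(\h_{i,j}(a_l))=\h_{i,p}(a_l)$ because $\h_{j,p}\circ\h_{i,j}=\h_{i,p}$ (here $i\leqslant j\leqslant p$), while for $l$ with $i_l\neq i$ it contributes $\h_{i_l,p}(a_l)$ unchanged. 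Hence both tuples coincide with $(\h_{i_1,p}(a_1),\ldots,\h_{i_{ar(m)},p}(a_{ar(m)}))$, so $F_m^{S(\DS)}(a_1,\ldots,a_{ar(m)})=F_m^{S(\DS)}(f_{\h_{i,j}}(a_1),\ldots,f_{\h_{i,j}}(a_{ar(m)}))$, which is exactly the second clause of Definition \ref{def:P-homomorfizm}.
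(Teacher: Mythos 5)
Your proposal is correct and follows essentially the same route as the paper's proof: both express the two sides via Definition \ref{def:suma-Plonki}, extract $j\leqslant p$ (the paper's $j_{0}\leqslant k_{0}$) from the membership hypothesis, show by the same semilattice computation that the join of the transported indices equals $p$, and finish with the cocycle condition $\h_{j,p}\circ\h_{i,j}=\h_{i,p}$. If anything, you are slightly more careful than the paper, which buries the degenerate case ``all arguments lie in $A_{i}$'' inside a ``without loss of generality,'' whereas you explicitly rule it out from $j\leqslant p$ and $i<j$.
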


\begin{proof}
Let $i_{0},j_{0}\in I$ and $i_{0}\leqslant j_{0}$.
If $i_{0}=j_{0}$, then $\varphi_{i_{0},j_{0}}$ is a P-homomorphism. Assume that $i_{0}< j_{0}$. Let $m\leq n$ and $a_{1},\ldots,a_{ar(m)}\in \bigcup_{i\in I}A_{i}$. Assume that $F^{S(\DS)}_{m}(a_{1},\ldots, a_{ar(m)})\in\bigcup_{j_{0}\leqslant k}A_{k}$. 

Consider the case that there is no $l\leq ar(m)$ s.t. $a_{l}\in A_{i_{0}}$. Then, $F^{S(\DS)}_{m}(a_{1},\allowbreak\ldots,a_{ar(m)})\allowbreak=F^{S(\DS)}_{m}(f_{\h_{i_{0},j_{0}}}(a_{1}),\ldots,\allowbreak f_{\h_{i_{0},j_{0}}}(a_{ar(m)}))$.

Consider the case that there is $l\leq ar(m)$ s.t. $a_{l}\in A_{i_{0}}$. 
Without loss of generality we can assume that 
$l_{0}\neq ar(m)$, $a_{1},\ldots, a_{l_{0}}\in A_{i_{0}}$ and $a_{l_{0}+1},\ldots, \allowbreak a_{ar(m)}\notin A_{i_{0}}$.

By Definition \ref{def:suma-Plonki}, $F^{S(\DS)}_{m}(a_{1},\ldots,a_{l_{0}},a_{l_{0}+1},\ldots\allowbreak a_{ar(m)})=F^{\A_{k_{0}}}_{m}(\allowbreak\varphi_{i_{0},k_{0}}(\allowbreak a_{1}),\allowbreak\ldots,\allowbreak\varphi_{i_{0},k_{0}}(\allowbreak a_{l_{0}}),\varphi_{j_{l_{0}+1},k_{0}}(\allowbreak a_{l_{0}+1}),\allowbreak\ldots,\varphi_{j_{ar(m)},k_{0}}(a_{ar(m)}))$, where $i_{0}\vee j_{l_{0}+1}\vee\allowbreak\ldots\vee j_{ar(m)}=k_{0}$ and for any $l_{0}+1\leq l\leq ar(m)$, $a_{l}\in A_{j_{l}}$. 

By the initial assumption $F^{S(\DS)}_{m}(a_{1},\ldots, a_{ar(m)})\in\bigcup_{j_{0}\leqslant k}A_{k}$, so $j_{0}\leqslant k_{0}$. Besides, since $k_{0}=i_{0}\vee j_{l_{0}+1}\vee\ldots\vee j_{ar(m)}\leqslant j_{0}\vee j_{l_{0}+1}\vee\ldots\vee j_{ar(m)}$, then $k_{0}=j_{0}\vee j_{l_{0}+1}\vee\ldots\vee j_{ar(m)}$.

By Definition \ref{def:system-prosty}, for any $l\leq l_{0}$, $\varphi_{i_{0},k_{0}}(a_{l})=\varphi_{j_{0},k_{0}}(\varphi_{i_{0},j_{0}}(a_{l}))$. Thus, by Definition \ref{def:system-prosty}, we have that: 
\begin{flalign*}
F^{S(\DS)}_{m}(a_{1},\ldots,\allowbreak a_{ar(m)})
=\,&F^{\A_{k_{0}}}_{m}(\varphi_{i_{0},k_{0}}(a_{1}),\ldots,\varphi_{i_{0},k_{0}}(a_{l_{0}}),&\\
&\,\varphi_{j_{l_{0}+1},k_{0}}(a_{l_{0}+1}),\ldots,\varphi_{j_{ar(m)},k_{0}}(a_{ar(m)}))&\\
=\,&F^{\A_{k_{0}}}_{m}(\varphi_{j_{0},k_{0}}(\varphi_{i_{0},j_{0}}(a_{1})),\ldots,\varphi_{j_{0},k_{0}}(\varphi_{i_{0},j_{0}}(a_{l_{0}})),&\\
\,&\varphi_{j_{l_{0}+1},k_{0}}(a_{l_{0}+1}),\ldots,\varphi_{j_{ar(m)},k_{0}}(a_{ar(m)}))&\\
=\,&F^{S(\DS)}_{m}(\varphi_{i_{0},j_{0}}(a_{1}),\ldots,\varphi_{i_{0},j_{0}}(a_{l_{0}}),a_{l_{0}+1},\ldots,a_{ar(m)})&\\
=\,&F^{S(\DS)}_{m}(\allowbreak f_{\h_{i_{0},j_{0}}}(a_{1}),\ldots,f_{\h_{i_{0},j_{0}}}(a_{ar(m)})).&\qedhere    
\end{flalign*}
\end{proof}

Let us now turn to a discussion of the issue of the composition of P-homomorphisms (cf. the conditions for the composition of homomorphisms given in Definition \ref{def:system-prosty}). Let us first note that from a complement algebra to a complement algebra with a `greater' index there can be more than one P-homomorphism. Such a situation is shown in Example \ref{Przyk.:P-homomorfizmy} (see \hyperlink{Appendix}{Appendix}). 
Below we will show that the composition of any P-homomorphisms defined for a complement algebra is a P-homomorphism. Let us start with the following fact.

\begin{fa}\label{fa.pomocniczy:h_ij+h_jk=P-homomorfizm}
Let $\A=\langle A; F_{1}^{\A},\ldots, F_{n}^{\A}\rangle$ be an algebra, 
$\langle\I,\{\A_{i}\}_{i\in I}\rangle$ be a frame of algebras isolated wrt $\A$, $\varphi_{i,j}\in\PHom^{\A}(\A_{i}^{-},\A_{j}^{-})$ and $\varphi_{j,k}\in\PHom^{\A}(\A_{j}^{-},\A_{k}^{-})$, while $i,j, k\in I$ such that $i< j< k$. 
Then,
for any $m\leq n$ and $a_{1},\ldots,\allowbreak a_{ar(m)} \allowbreak\in\allowbreak A$, if $F^{\A}_{m}(a_{1},\ldots, a_{ar(m)})\in\bigcup_{k\leqslant p}A_{p}^{-}$, then 
$F^{\A}_{m}(f_{\h_{j,k}}\allowbreak(f_{\h_{i,j}}\allowbreak(a_{1})),\ldots,\allowbreak f_{\h_{j,k}}(\allowbreak f_{\h_{i,j}}\allowbreak(a_{ar(m)})))=F^{\A}_{m}(f_{\h_{j,k}\circ\,\h_{i,j}}(a_{1}),\ldots, f_{\h_{j,k}\circ\,\h_{i,j}}(a_{ar(m)}))$.
\end{fa}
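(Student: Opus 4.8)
The plan is to recast both sides as two instances of the \emph{same} operation $F_m^{\A}$ applied to two tuples that differ only in a way that the defining condition of the single P-homomorphism $\varphi_{j,k}$ can absorb. Throughout, I would abbreviate the two argument tuples as $e_l:=f_{\varphi_{j,k}}(f_{\varphi_{i,j}}(a_l))$ (the left-hand tuple) and $c_l:=f_{\varphi_{j,k}\circ\varphi_{i,j}}(a_l)$ (the right-hand tuple), so that the claim reads $F_m^{\A}(e_1,\ldots,e_{ar(m)})=F_m^{\A}(c_1,\ldots,c_{ar(m)})$. It is tempting to prove $e_l=c_l$ argument by argument, but this \emph{fails}: on an argument $a_l\in A_j^-$ the map $f_{\varphi_{j,k}}\circ f_{\varphi_{i,j}}$ leaves $a_l$ unchanged under $f_{\varphi_{i,j}}$ and then applies $\varphi_{j,k}$, giving $e_l=\varphi_{j,k}(a_l)$, whereas $f_{\varphi_{j,k}\circ\varphi_{i,j}}$ (whose underlying homomorphism has domain $\A_i^-$) fixes $a_l$, giving $c_l=a_l$. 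Thus equality can only be recovered after applying $F_m^{\A}$, which is precisely where the hypothesis on $F_m^{\A}(a_1,\ldots,a_{ar(m)})$ will be used.

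The key pointwise identity I would establish first is $f_{\varphi_{j,k}}(c_l)=e_l$ for every $l$, checked by a two-way split on whether $a_l\in A_i^-$. If $a_l\in A_i^-$, then $c_l=(\varphi_{j,k}\circ\varphi_{i,j})(a_l)\in A_k^-$, and since $A_k^-\cap A_j^-=\emptyset$ this element is fixed by $f_{\varphi_{j,k}}$; on the other side $f_{\varphi_{i,j}}(a_l)=\varphi_{i,j}(a_l)\in A_j^-$, so $f_{\varphi_{j,k}}$ sends it to $\varphi_{j,k}(\varphi_{i,j}(a_l))$, the same element. If $a_l\notin A_i^-$, then $c_l=a_l$ and $f_{\varphi_{i,j}}(a_l)=a_l$, whence $f_{\varphi_{j,k}}(c_l)=f_{\varphi_{j,k}}(a_l)=e_l$ trivially. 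This step uses only the codomains of $\varphi_{i,j}$ and $\varphi_{j,k}$ (into $\A_j^-$ and $\A_k^-$) together with the pairwise disjointness of $A_i^-,A_j^-,A_k^-$, which for the comparable indices $i<j<k$ is immediate from \eqref{def:-A}.

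With this identity in hand the argument reduces to firing the defining condition of $\varphi_{j,k}$ on the tuple $(c_1,\ldots,c_{ar(m)})$, and the remaining work is to secure its activating premise $F_m^{\A}(c_1,\ldots,c_{ar(m)})\in\bigcup_{k\leqslant p}A_p^-$. Here I would split on whether some $a_l$ lies in $A_i^-$: if one does, the corresponding $c_l\in A_k^-\subseteq\bigcup_{k\leqslant p}A_p^-$, and Fact \ref{fa:warunki} (applied with base index $k$) propagates this membership to $F_m^{\A}(c_1,\ldots,c_{ar(m)})$; if none does, then $c_l=a_l$ for all $l$ and the required membership is exactly the hypothesis. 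Once the premise holds, the P-homomorphism property of $\varphi_{j,k}$ gives $F_m^{\A}(c_1,\ldots,c_{ar(m)})=F_m^{\A}(f_{\varphi_{j,k}}(c_1),\ldots,f_{\varphi_{j,k}}(c_{ar(m)}))=F_m^{\A}(e_1,\ldots,e_{ar(m)})$ by the pointwise identity, which is the claim. The main obstacle is conceptual rather than computational: recognizing that the equality cannot be obtained coordinatewise and must instead be routed through the \emph{single} homomorphism $\varphi_{j,k}$, and then ensuring that the up-set membership needed to invoke its P-homomorphism property is actually available --- this is exactly the role of Fact \ref{fa:warunki}. It is worth noting that the whole argument relies only on the P-homomorphism property of $\varphi_{j,k}$ and not on that of $\varphi_{i,j}$.
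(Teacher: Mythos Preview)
Your proof is correct and takes a genuinely different, cleaner route than the paper. The paper argues by a four-way case split on whether some argument lies in $A_i^-$ and/or in $A_j^-$ (after first showing, via the P-homomorphism property of $\varphi_{i,j}$, that not all arguments can lie in $A_i^-\cup A_j^-$), and then handles each case by unwinding $f_{\varphi_{i,j}}$ and $f_{\varphi_{j,k}}$ separately. You instead establish the single pointwise identity $f_{\varphi_{j,k}}(c_l)=e_l$ uniformly and reduce the claim to one application of the defining condition of $\varphi_{j,k}$ on the tuple $(c_1,\ldots,c_{ar(m)})$, with Fact~\ref{fa:warunki} supplying the activating premise in the only nontrivial case. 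This buys you two things: the argument is case-free (the paper's preliminary step and its four cases collapse into your two-line verification of $f_{\varphi_{j,k}}(c_l)=e_l$), and---as you note---it makes visible that the P-homomorphism property of $\varphi_{i,j}$ is never used, only its codomain; the paper's organisation obscures this, since its opening paragraph invokes Definition~\ref{def:P-homomorfizm} for $\varphi_{i,j}$ to set up the case split.
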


\begin{proof}
Let $a_{1},\ldots,a_{ar(m)}\in A$ and assume $F^{\A}_{m}(a_{1},\ldots, a_{ar(m)})\in\bigcup_{k\leqslant p}A_{p}^{-}$.
Hence, $F^{\A}_{m}(a_{1},\ldots, a_{ar(m)})\in\bigcup_{j\leqslant p}A_{p}^{-}$.

We will show that there is $l\leq ar(m)$ s.t. $a_{l}\notin A_{i}^{-}$ and $a_{l}\notin A_{j}^{-}$. Assume that for any $l\leq ar(m)$, $a_{l}\in A^{-}_{i}$ or $a_{l}\in A^{-}_{j}$. Then, $F^{\A}_{m}(f_{\h_{i,j}}(a_{1}),\ldots, \allowbreak f_{\h_{i,j}}( a_{ar(m)}))\in A_{j}^{-}$.
By Definition \ref{def:P-homomorfizm}, $F^{\A}_{m}(a_{1},\ldots, a_{ar(m)})=F^{\A}_{m}(f_{\h_{i,j}}(a_{1}),\allowbreak\ldots, f_{\h_{i,j}}(a_{ar(m)}))$. Therefore, by \eqref{def:-A}, $F^{\A}_{m}(a_{1},\ldots, a_{ar(m)})\notin\bigcup_{k\leqslant p}A_{p}^{-}$.

Consider the following cases:
\begin{itemize}
    \item[a.] there is no $l\leq ar(m)$ s.t. $a_{l}\in A_{i}^{-}$ and there is no $l\leq ar(m)$ s.t. $a_{l}\in A_{j}^{-}$, 
    \item[b.] there is no $l\leq ar(m)$ s.t. $a_{l}\in A_{i}^{-}$ and there is $l\leq ar(m)$ s.t. $a_{l}\in A_{j}^{-}$, 
    \item[c.] there is $l\leq ar(m)$ s.t. $a_{l}\in A_{i}^{-}$ and there is no $l\leq ar(m)$ s.t. $a_{l}\in A_{j}^{-}$, 
    \item[d.] there is $l\leq ar(m)$ s.t. $a_{l}\in A_{i}^{-}$ and there is $l\leq ar(m)$ s.t. $a_{l}\in A_{j}^{-}$.
\end{itemize}

Case a. By the definition of the function $f_{\h}$ we get: 
\begin{flalign*}
&F^{\A}_{m}(f_{\h_{j,k}}(f_{\h_{i,j}}(a_{1})),\ldots,f_{\h_{j,k}}(f_{\h_{i,j}}(a_{ar(m)})))=&\\
&=F^{\A}_{m}(a_{1},\ldots,a_{ar(m)})&\\
&=F^{\A}_{m}(f_{\h_{j,k}}\circ\,f_{\h_{i,j}}(a_{1}),\ldots,f_{\h_{j,k}}\circ\,f_{\h_{i,j}}(a_{ar(m)})).&
\end{flalign*}

Case  b. Without loss of generality we assume that only $a_{1},\allowbreak\ldots, a_{l}\in A_{j}^{-}$, where $l\lneq ar(m)$. By the definition of the function $f_{\h}$ and by Definition \ref{def:P-homomorfizm} we have: 
\begin{flalign*}
&F^{\A}_{m}(f_{\h_{j,k}}(f_{\h_{i,j}}(a_{1})),\ldots,f_{\h_{j,k}}( f_{\h_{i,j}}(a_{ar(m)})))=&\\
&=F^{\A}_{m}(\h_{j,k}( a_{1}),\ldots,\h_{j,k}(a_{l}),a_{l+1}, \ldots, a_{ar(m)})&\\
&=F^{\A}_{m}(f_{\h_{j,k}}(a_{1}),\ldots,f_{\h_{j,k}}(a_{l}),f_{\h_{j,k}}(a_{l+1}), \ldots, f_{\h_{j,k}}(a_{ar(m)}))&\\
&=F^{\A}_{m}(a_{1},\ldots,a_{l},a_{l+1},\ldots,a_{ar(m)})&\\
&=F^{\A}_{m}(f_{\h_{j,k}\circ\,\h_{i,j}}(a_{1}),\ldots,f_{\h_{j,k}\circ\,\h_{i,j}}(a_{l}),f_{\h_{j,k}\circ\,\h_{i,j}}(a_{l+1}),\ldots,f_{\h_{j,k}\circ\,\h_{i,j}}(a_{ar(m)})).&
\end{flalign*}

Case  c. Without loss of generality, we assume that only 
$a_{1},\allowbreak\ldots, a_{l}\in A_{i}$, where $l\lneq ar(m)$. By the definition of the function $f_{\h}$ we get: 
\begin{flalign*}
&F^{\A}_{m}(f_{\h_{j,k}}(f_{\h_{i,j}}(a_{1})),\ldots,f_{\h_{j,k}}( f_{\h_{i,j}}(a_{ar(m)})))=&\\
&=F^{\A}_{m}(\h_{j,k}(\h_{i,j}(a_{1})),\ldots,\h_{j,k}(\h_{i,j}(a_{l})),a_{l+1}, \ldots, a_{ar(m)})&\\
&=F^{\A}_{m}(f_{\h_{j,k}\circ\,\h_{i,j}}(a_{1}),\ldots,f_{\h_{j,k}\circ\,\h_{i,j}}(a_{l}),f_{\h_{j,k}\circ\,\h_{i,j}}(a_{l+1}),\ldots,f_{\h_{j,k\circ\,\h_{i,j}}}(a_{ar(m)})).&
\end{flalign*}

Case  d. Without loss of generality, we assume that only
$a_{1},\allowbreak\ldots, a_{l}\in A_{j}^{-}$ and $a_{l+1},\ldots, a_{l+p}\in A_{i}^{-}$, where $l+p\lneq ar(m)$. Notice that $\h_{j,k}(\h_{i,j}(a_{l+1})),\ldots,\allowbreak\h_{j,k}(\h_{i,j}(a_{l+p}))\in\bigcup_{k\leqslant p}A^{-}_{p}$. By the definition of the function $f_{\h}$, Fact \ref{fa:warunki} and Definition \ref{def:P-homomorfizm}, we have: 
\begin{flalign*}
&\,F^{\A}_{m}(f_{\h_{j,k}}(f_{\h_{i,j}}(a_{1})),\ldots,f_{\h_{j,k}}(f_{\h_{i,j}}(a_{ar(m)}))) =&\\
&=F^{\A}_{m}(f_{\h_{j,k}}(a_{1}),\ldots,f_{\h_{j,k}}(a_{l}),\h_{j,k}(\h_{i,j}(a_{l+1})),\ldots,\h_{j,k}(\h_{i,j}(a_{l+p})),&\\
&\;\;\;\;\;f_{\h_{j,k}}(a_{l+p+1}),\ldots,f_{\h_{j,k}}(a_{ar(m)})) &\\
&=F^{\A}_{m}(f_{\h_{j,k}}(a_{1}),\ldots,f_{\h_{j,k}}(a_{l}),f_{\h_{j,k}}(\h_{j,k}(\h_{i,j}(a_{l+1}))),\ldots,f_{\h_{j,k}}(\h_{j,k}(\h_{i,j}(a_{l+p}))),&\\
&\;\;\;\;\;f_{\h_{j,k}}(a_{l+p+1}),\ldots,f_{\h_{j,k}}(a_{ar(m)})) &\\
&=F^{\A}_{m}(a_{1},\ldots,a_{l},\h_{j,k}(\h_{i,j}(a_{l+1})),\ldots,\h_{j,k}(\h_{i,j}(a_{l+p})),&\\
&\;\;\;\;\;a_{l+p+1},\ldots,a_{ar(m)}) &\\
&=F^{\A}_{m}(f_{\h_{j,k}\circ\,\h_{i,j}}(a_{1}),\ldots,f_{\h_{j,k}\circ\,\h_{i,j}}(a_{l}),f_{\h_{j,k}\circ\,\h_{i,j}}(a_{l+1}),\ldots,f_{\h_{j,k}\circ\,\h_{i,j}}(a_{l+p}),&\\
&\;\;\;\;\;f_{\h_{j,k}\circ\,\h_{i,j}}(a_{l+p+1}),\ldots,f_{\h_{j,k}\circ\,\h_{i,j}}(a_{ar(m)})).\hspace{5.9cm}\qedhere&
\end{flalign*}
\end{proof}

Now we can  easily prove the following fact concerning the composition of P-homomorphisms.
\begin{fa}\label{fa:h_ij+h_jk=P-homomorfizm}
Let $\A=\langle A; F_{1}^{\A},\ldots, F_{n}^{\A}\rangle$ be an algebra and  $\langle\I,\{\A_{i}\}_{i\in I}\rangle$ be a  frame of algebras isolated wrt $\A$. Then, for all $i,j,k\in I$, if $i\leqslant j\leqslant k$, $\varphi_{i,j}\in\PHom^{\A}(\A_{i}^{-},\A_{j}^{-})$ and $\varphi_{j,k}\in\PHom^{\A}(\A_{j}^{-},\A_{k}^{-})$, then $\varphi_{j,k}\circ\varphi_{i,j}\in\PHom^{\A}(\A_{i}^{-},\A_{k}^{-})$.
\end{fa}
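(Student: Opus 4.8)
The plan is to verify the two requirements that Definition \ref{def:P-homomorfizm} imposes on $\varphi_{j,k}\circ\varphi_{i,j}$, namely that it is a homomorphism $\A_i^-\fun\A_k^-$ and that it satisfies the appropriate clause of the definition. The first requirement is immediate: by Corollary \ref{wn:odejmowanie+suma-SP} each complement algebra is closed under the operations of $\A$, so $\A_i^-,\A_j^-,\A_k^-$ are subalgebras, and the composition of the homomorphisms $\varphi_{i,j},\varphi_{j,k}$ is again a homomorphism, this time from $\A_i^-$ to $\A_k^-$. So the work lies in checking the defining clause. First I would dispose of the degenerate situations arising from the allowed equalities $i\leqslant j\leqslant k$: if $i=j$ then $\varphi_{i,j}=\id$ and $\varphi_{j,k}\circ\varphi_{i,j}=\varphi_{j,k}$ is a P-homomorphism by hypothesis; symmetrically if $j=k$ then $\varphi_{j,k}=\id$ and the composition reduces to $\varphi_{i,j}$; and when $i=j=k$ the composition is the identity, which is the first case of the definition. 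This leaves the essential case $i<j<k$, where $i<k$ and the second clause must be established.

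For that case I would fix $m\leq n$ and $a_1,\ldots,a_{ar(m)}\in A$ and assume $F_m^{\A}(a_1,\ldots,a_{ar(m)})\in\bigcup_{k\leqslant p}A_p^-$, then peel off the two P-homomorphisms in turn. Since $j\leqslant k$ we have $\bigcup_{k\leqslant p}A_p^-\subseteq\bigcup_{j\leqslant p}A_p^-$, so the value also lies in $\bigcup_{j\leqslant p}A_p^-$; applying the defining clause for $\varphi_{i,j}\in\PHom^{\A}(\A_i^-,\A_j^-)$ gives $F_m^{\A}(a_1,\ldots,a_{ar(m)})=F_m^{\A}(f_{\varphi_{i,j}}(a_1),\ldots,f_{\varphi_{i,j}}(a_{ar(m)}))$. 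This common value still lies in $\bigcup_{k\leqslant p}A_p^-$, so applying the defining clause for $\varphi_{j,k}\in\PHom^{\A}(\A_j^-,\A_k^-)$ to the tuple $f_{\varphi_{i,j}}(a_1),\ldots,f_{\varphi_{i,j}}(a_{ar(m)})$ rewrites it as $F_m^{\A}(f_{\varphi_{j,k}}(f_{\varphi_{i,j}}(a_1)),\ldots,f_{\varphi_{j,k}}(f_{\varphi_{i,j}}(a_{ar(m)})))$. Chaining the two equalities expresses $F_m^{\A}(a_1,\ldots,a_{ar(m)})$ through the pointwise composition $f_{\varphi_{j,k}}\circ f_{\varphi_{i,j}}$.

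The remaining task is to replace this pointwise composition by the single function $f_{\varphi_{j,k}\circ\varphi_{i,j}}$, which is exactly the form demanded by the second clause of Definition \ref{def:P-homomorfizm} for $\varphi_{j,k}\circ\varphi_{i,j}$; this is precisely the content of Fact \ref{fa.pomocniczy:h_ij+h_jk=P-homomorfizm}, invoked under the standing hypothesis $F_m^{\A}(a_1,\ldots,a_{ar(m)})\in\bigcup_{k\leqslant p}A_p^-$, and combining it with the chain above yields $F_m^{\A}(a_1,\ldots,a_{ar(m)})=F_m^{\A}(f_{\varphi_{j,k}\circ\varphi_{i,j}}(a_1),\ldots,f_{\varphi_{j,k}\circ\varphi_{i,j}}(a_{ar(m)}))$, which closes the argument. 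I expect this last replacement to be the only real obstacle and indeed the crux of the whole statement: the maps $f_{\varphi_{j,k}}\circ f_{\varphi_{i,j}}$ and $f_{\varphi_{j,k}\circ\varphi_{i,j}}$ do \emph{not} coincide as functions on $A$ --- they disagree on $A_j^-$, where the former applies $\varphi_{j,k}$ while the latter, whose domain is $A_i^-$, acts as the identity --- so the desired equality holds only after applying $F_m^{\A}$ and only under the assumption on the location of the output value, which is exactly why the preparatory Fact \ref{fa.pomocniczy:h_ij+h_jk=P-homomorfizm}, with its four-case analysis, was needed and lets the present proof remain a short assembly.
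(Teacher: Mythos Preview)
Your proof is correct and follows essentially the same route as the paper: dispose of the degenerate cases, then in the case $i<j<k$ apply the defining clause first for $\varphi_{i,j}$ (using $\bigcup_{k\leqslant p}A_p^-\subseteq\bigcup_{j\leqslant p}A_p^-$), then for $\varphi_{j,k}$, and finally invoke Fact~\ref{fa.pomocniczy:h_ij+h_jk=P-homomorfizm} to pass from $f_{\varphi_{j,k}}\circ f_{\varphi_{i,j}}$ to $f_{\varphi_{j,k}\circ\varphi_{i,j}}$. Your additional remarks --- that the composition is a homomorphism between the complement subalgebras, and that the two extensions to $A$ genuinely differ on $A_j^-$ so the auxiliary fact is indispensable --- are helpful clarifications not spelled out in the paper.
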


\begin{proof}
Assume that $i<j<k$. In the other cases, the composition equals one of the given P-homomorphisms. 

Let $a_{1},\ldots,a_{ar(m)}\in A$. Assume that $F^{\A}_{m}(a_{1},\ldots, a_{ar(m)})\in\bigcup_{k\leqslant p}A_{p}^{-}$. Hence, $F^{\A}_{m}(a_{1},\ldots, a_{ar(m)})\in\bigcup_{j\leqslant p}A_{p}^{-}$. Be Definition \ref{def:P-homomorfizm}, $F^{\A}_{m}(f_{\h_{i,j}}(a_{1}),\allowbreak\ldots,\allowbreak f_{\h_{i,j}}(a_{ar(m)}))=F^{\A}_{m}(a_{1},\ldots,  a_{ar(m)})\in\bigcup_{k\leqslant p}A_{p}^{-}$.
Therefore, by Definition \ref{def:P-homomorfizm} and Fact \ref{fa.pomocniczy:h_ij+h_jk=P-homomorfizm} we obtain:
\begin{flalign*}
F^{\A}_{m}(a_{1},\ldots, a_{m})&=F^{\A}_{m}(f_{\h_{i,j}}(a_{1}),\ldots, f_{\h_{i,j}}(a_{m}))&\\
&=F^{\A}_{m}(f_{\h_{j,k}}(f_{\h_{i,j}}(a_{1})),\ldots, f_{\h_{j,k}}(f_{\h_{i,j}}(a_{m})))&\\
&=F^{\A}_{m}(f_{\h_{j,k}\circ\,\h_{i,j}}(a_{1}),\ldots, f_{\h_{j,k}\circ\,f_{\h_{i,j}}}(a_{m})).&\qedhere
\end{flalign*}
\end{proof}

In the set of all frames of isolated algebras, we have to distinguish those frames,
for which from any complement algebra to any complement
algebra of a `greater' index, there is a P-homomorphism. Moreover, such homomorphisms have to satisfy the conditions for the composition of homomorphisms given in Definition \ref{def:system-prosty}. 
To this aim, we introduce the notion of a sound set of P-homomorphisms.

\begin{df}\label{def:odpowiedni.P-homomorfizm}
Let $\A=\langle A; F_{1}^{\A},\ldots,F_{n}^{\A}\rangle$ be an algebra and 
$\langle\I,\{\A_{i}\}_{i\in I}\rangle$ 
be a frame of algebras  isolated wrt $\A$.
A \textit{sound set of P-homomorphisms} is a set $\{\h_{i,j}\}_{i\leqslant j;i,j\in I}$ s.t. for any $i,j\in I$:
\begin{itemize}
\item if $i=j$ or $i\prec j$, then $\varphi_{i,j}\in\PHom^{\A}(\A_{i}^{-},\A_{j}^{-})$,
\item if $i<j$ and $i\nprec j$, then the following conditions are satisfied: 
\begin{itemize}
\item $\varphi_{i,j}=\h_{i_{m},j}\circ\ldots\circ\h_{i,i_{1}}$, for some $i_{1},\ldots,i_{m}\in I$ s.t. $i\prec i_{1}\prec\ldots\prec i_{m}\prec j$,
\item for any $j_{1},\ldots,j_{o}\in I$, if  $i\prec j_{1}\prec\ldots\prec j_{o}\prec j$, then $\varphi_{i,j}=\h_{j_{o},j}\circ\ldots\circ\h_{i,j_{1}}$.
\end{itemize}
\end{itemize}
\end{df}




\section{Decomposition Theorem}
\label{Section 4}

Let us now move on to the last construction. 

\begin{df}\label{def:kanoniczny.system.prosty}
Let $\A=\langle A; F_{1}^{\A},\ldots,F_{n}^{\A}\rangle$ be an algebra. 
\begin{itemize}\itemsep=0pt
    \item A \emph{system of algebras isolated wrt $\A$} is a triple $\langle\I,\{\A_{i}^{-}\}_{i\in I},\allowbreak\{\h_{i,j}\}_{i\leqslant j;\allowbreak i,j\in I}\rangle$ s.t. $\langle\I,\{\A_{i}\}_{i\in I}\rangle$ is a frame of algebras isolated wrt $\A$ and $\{\h_{i,j}\allowbreak\}_{i\leqslant j;i,j\in I}$ is a sound set of P-homomorphisms. 
    \item We call a system of isolated algebras $\langle\I,\{\A_{i}^{-}\}_{i\in I},\allowbreak\{\h_{i,j}\}_{i\leqslant j;\allowbreak i,j\in I}\rangle$ a \textit{non-trivial} one iff there are $i,j\in I$ s.t. $i\neq j$.
\end{itemize}
\end{df}

Let us now show that the set of systems of isolated algebras 
is non-empty.

\begin{tw}\label{pierwsze twierdzenie o dekompozycji}
If  $\A=\langle A; F_{1}^{\A},\ldots,F_{n}^{\A}\rangle$ is Płonka sum, then there exists  system of algebras isolated wrt $\A$.
\end{tw}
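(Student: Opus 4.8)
The plan is to read the required system off a witnessing direct system. Since $\A$ is a Płonka sum, I would fix a direct system $\DS=\langle\I,\{\A_{i}\}_{i\in I},\{\h_{i,j}\}_{i\leqslant j;i,j\in I}\rangle$ with $\A=S(\DS)$ and let $\{\B_{j}\}_{j\in J}$ be the set of all algebras isolated wrt $\A$. The assertion will be that the frame generated by the isolated algebras $\A_{i}^{+}$ of \eqref{def:Spl+}, equipped with the original maps $\h_{i,j}$, already forms a system of isolated algebras in the sense of Definition \ref{def:kanoniczny.system.prosty}; so there is nothing to construct beyond checking the two defining clauses.

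For the frame, I would invoke Fact \ref{fa:DS=SAF}: the map $f\colon I\fun J$ sending $i$ to the index of $\A_{i}^{+}$ is an injection, and $\langle\langle f(I),\sqsubseteq\rangle,\{\B_{k}\}_{k\in f(I)}\rangle$ is a frame of algebras isolated wrt $\A$, isomorphic via $f$ to $\langle\langle I,\leqslant\rangle,\{\A_{i}^{+}\}_{i\in I}\rangle$. Thus F1--F3 of Definition \ref{def:SAF} hold automatically (assuming $\DS$ non-trivial, so that the non-triviality clause in F1 is met). The next step, and the technical heart of the argument, is to identify the complement algebras of this frame with the fibres of $\DS$. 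For $k=f(i)$, unwinding \eqref{def:-A} and using Fact \ref{fa:DS->Spl}.2 to turn $\subsetneq$ into $<$ gives $B_{k}^{-}=A_{i}^{+}\setminus\bigcup_{i'<i}A_{i'}^{+}$; since $A_{i'}^{+}=\bigcup_{l\leqslant i'}A_{l}$ one has $\bigcup_{i'<i}A_{i'}^{+}=\bigcup_{l<i}A_{l}$, and disjointness of the universes of $\DS$ then yields $B_{k}^{-}=A_{i}$. Hence the complement algebra indexed by $f(i)$ is exactly $\A_{i}$.

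With this identification in hand, I would transport the family $\{\h_{i,j}\}$ along $f$ and verify the clauses of Definition \ref{def:odpowiedni.P-homomorfizm}. Because the complement-algebra family $\{\B_{k}^{-}\}$ coincides with $\{\A_{i}\}$ and the orders correspond under $f$, the P-homomorphism condition demanded for complement algebras is literally the one established in Fact \ref{fa:varphi_ij->P-hom}; that fact gives $\h_{i,j}\in\PHom^{\A}(\A_{i},\A_{j})$ for all $i\leqslant j$, covering the identity and covering-pair cases. For a non-covering pair $i<j$, both composition clauses reduce to the direct-system law $\h_{j',k'}\circ\h_{i',j'}=\h_{i',k'}$ of Definition \ref{def:system-prosty}: along any maximal chain $i\prec i_{1}\prec\ldots\prec i_{m}\prec j$ the composite telescopes to $\h_{i,j}$, so all chains agree and the set is sound. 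Pairing the frame with this sound set produces the desired system.

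The main obstacle I anticipate is precisely the bookkeeping of the middle paragraph: one must confirm that the complement algebras of the \emph{generated} frame are the fibres $\A_{i}$ themselves (this is where disjointness is indispensable) and that the P-homomorphism notion of Fact \ref{fa:varphi_ij->P-hom} is word-for-word the one required by Definition \ref{def:odpowiedni.P-homomorfizm}, so that no property of $\h_{i,j}$ needs re-proving. Everything else---soundness and path-independence---is then immediate from the direct-system axioms, and the degenerate single-fibre case is excluded by the standing non-triviality assumption.
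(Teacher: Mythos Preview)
Your proposal is correct and follows essentially the same route as the paper: fix a witnessing direct system, use Fact~\ref{fa:DS=SAF} to obtain the frame $\{\B_{k}\}_{k\in f(I)}$, identify its complement algebras with the fibres $\A_{i}$ (the paper states this in one line, you spell out the computation via \eqref{def:Spl+}, \eqref{def:-A} and disjointness), and then transport the $\h_{i,j}$ along $f$, invoking Fact~\ref{fa:varphi_ij->P-hom} for the P-homomorphism clause and the composition law of Definition~\ref{def:system-prosty} for soundness. Your write-up is in fact more explicit than the paper's on the two points you flagged as the ``main obstacle'', but the argument is the same.
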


\begin{proof}
Let $\DS=\langle \langle I,\leqslant\rangle,\{\A_{i}\}_{i\in I},\{\h_{i,j}\}_{i\leqslant j;i,j\in I}\rangle$ be a direct system s.t. $\A=S(\DS)$. Let $\langle \langle J,\sqsubseteq\rangle,\{\B_{j}\}_{j\in J}\rangle$ is a frame of algebras isolated wrt $\A$, where $\{\B_{j}\}_{j\in J}$ is the set of all isolated algebras wrt $\A$. 

By Fact \ref{fa:DS=SAF}, we have the injection $f\colon I\fun J $ defined by the formula $f(i):=j$, where $A^{+}_{i}=B_{j}$. We have also that $\langle \langle I,\leqslant\rangle,\{\A_{i}^{+}\}_{i\in I}\rangle$ and $\langle \langle f(I),\allowbreak \sqsubseteq\rangle,\allowbreak \{\B_{k}\}_{k\in f(I)}\rangle$ are isomorphic. Therefore, isomorphic are also $\langle \langle I,\leqslant\rangle,\{\A_{i}\}_{i\in I}\rangle$ and $\langle \langle f(I),\sqsubseteq\rangle,\{\B_{k}^{-}\}_{k\in f(I)}\rangle$.

For all $j_{1},j_{2}\in J$, for every $a\in B_{j_{1}}$ we put
$\psi_{j_{1},j_{2}}(a):=\varphi_{i_{1},i_{2}}(a)$, where $j_{1}=f(i_{1})$ and $j_{2}=f(i_{2})$. By Fact \ref{fa:varphi_ij->P-hom} and Definition \ref{def:system-prosty}, $\{\psi_{j,k}\}_{j\sqsubseteq k;j,k\in f(I)}$ is a sound set of P-homomorphisms.

So, $\langle \langle f(I),\sqsubseteq\rangle,\{\B_{k}\}_{k\in f(I)},\{\psi_{j,k}\}_{j\sqsubseteq k;j,k\in f(I)}\rangle$ is a  system of algebras isolated wrt $\A$.
\end{proof}

The following theorem shows that isolated algebras allow us to decompose the Płonka sum.

\begin{tw}[Decomposition theorem]\label{Twierdzenie o dekompozycji}
If $\A=\langle A; F_{1}^{\A},\ldots,F_{n}^{\A}\rangle$ is an algebra and $\DS=\langle\I,\allowbreak\{\A_{i}^{-}\}_{i\in I},\allowbreak\{\h_{i,j}\}_{i\leqslant j;i,j\in I}\rangle$ is a non-trivial system of algebras isolated wrt $\A$, then $\DS$ is a direct system and $\A=S(\DS)$.
\end{tw}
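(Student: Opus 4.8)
The plan is to verify the two defining conditions of Definition~\ref{def:system-prosty} for $\DS=\langle\I,\{\A_{i}^{-}\}_{i\in I},\{\h_{i,j}\}_{i\leqslant j;i,j\in I}\rangle$ and then to check that the Płonka sum built from it reproduces $\A$. For the direct-system conditions, note that $\I$ is a join-semilattice by Fact~\ref{fa:spl.pol-krata} together with the definition of $\leqslant$ on indices; the family $\{\A_{i}^{-}\}_{i\in I}$ consists of algebras with disjoint universes, since by \eqref{def:-A} the sets $A_{i}^{-}=A_{i}\setminus\bigcup_{j<i}A_{j}$ are pairwise disjoint (if $a\in A_{i}^{-}\cap A_{k}^{-}$ with $i\neq k$, then by Fact~\ref{fa:iloczyn-SP} the nonempty intersection $A_{i}\cap A_{k}$ is an isolated algebra with index strictly below both, contradicting membership in the complements), and each is nonempty and closed under operations by Corollary~\ref{wn:odejmowanie+suma-SP}. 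The homomorphism conditions~c are exactly what a sound set of P-homomorphisms provides: for $i\prec j$ the maps are genuine homomorphisms between the complement algebras, and Definition~\ref{def:odpowiedni.P-homomorfizm} together with Fact~\ref{fa:h_ij+h_jk=P-homomorfizm} guarantees that the composition law $\h_{j,k}\circ\h_{i,j}=\h_{i,k}$ holds for arbitrary $i\leqslant j\leqslant k$ (the soundness condition forces $\h_{i,k}$ to equal any admissible composition along a maximal chain, and Fact~\ref{fa:h_ij+h_jk=P-homomorfizm} ensures each such composition is itself a P-homomorphism). This establishes that $\DS$ is a direct system.

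Next I would show $\A=S(\DS)$. The universes agree: $\bigcup_{i\in I}A_{i}^{-}=\bigcup_{i\in I}A_{i}=A$, the first equality because every element of some $A_{i}$ lies in the complement of its least isolating index (Fact~\ref{fa:SAF-wlasnosci}.3), and the second by Fact~\ref{fa:SAF-wlasnosci}.1. It remains to verify that the operations coincide. Fix $m\leq n$ and $a_{1},\dots,a_{ar(m)}\in A$; let $i_{l}$ be the index with $a_{l}\in A_{i_{l}}^{-}$ and set $k=i_{1}\vee\dots\vee i_{ar(m)}$ computed in $\I$. By Definition~\ref{def:suma-Plonki}, $F^{S(\DS)}_{m}(a_{1},\dots,a_{ar(m)})=F^{\A_{k}^{-}}_{m}(\h_{i_{1},k}(a_{1}),\dots,\h_{i_{ar(m)},k}(a_{ar(m)}))=F^{\A}_{m}(f_{\h_{i_{1},k}}(a_{1}),\dots,f_{\h_{i_{ar(m)},k}}(a_{ar(m)}))$, the last step because each $\h_{i_{l},k}$ agrees with $f_{\h_{i_{l},k}}$ on $A_{i_{l}}^{-}$ and the operations of the complement algebras are restrictions of those of $\A$.

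The heart of the matter—and the step I expect to be the main obstacle—is proving that this last expression equals $F^{\A}_{m}(a_{1},\dots,a_{ar(m)})$, i.e.\ that lifting each argument to the join index does not change the value of the operation in $\A$. The key is that the result $F^{\A}_{m}(a_{1},\dots,a_{ar(m)})$ lands in $A_{k}$, indeed in $\bigcup_{k\leqslant p}A_{p}^{-}$: since each $a_{l}\in A_{i_{l}}\subseteq A_{k}$ and $A_{k}$ is isolated hence closed, the value lies in $A_{k}$, and by Fact~\ref{fa:warunki} it lies in the appropriate union of complements. With this membership condition in hand, the defining property of a P-homomorphism (Definition~\ref{def:P-homomorfizm}) applies to each $\h_{i_{l},k}$ in turn, allowing the arguments to be lifted one coordinate at a time while preserving the value; here one must argue carefully that after lifting some coordinates the hypothesis $F^{\A}_{m}(\cdots)\in\bigcup_{k\leqslant p}A_{p}^{-}$ still holds so that the next P-homomorphism may be applied, which is exactly the kind of stepwise lifting already carried out in the proof of Fact~\ref{fa:varphi_ij->P-hom}. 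Chaining these applications yields $F^{\A}_{m}(f_{\h_{i_{1},k}}(a_{1}),\dots,f_{\h_{i_{ar(m)},k}}(a_{ar(m)}))=F^{\A}_{m}(a_{1},\dots,a_{ar(m)})$, completing the proof that $\A=S(\DS)$.
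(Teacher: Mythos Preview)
Your proposal is correct and follows essentially the same route as the paper's proof. One point deserves sharpening: your appeal to Fact~\ref{fa:warunki} to place $F^{\A}_{m}(a_{1},\dots,a_{ar(m)})$ in $\bigcup_{k\leqslant p}A_{p}^{-}$ does not work as written, since the hypothesis of that fact is that some \emph{argument} lies in $\bigcup_{k\leqslant p}A_{p}^{-}$, whereas here each $a_{l}\in A_{i_{l}}^{-}$ with $i_{l}\leqslant k$, and the join $k$ need not equal any $i_{l}$. The paper instead argues directly that the value lies in $A_{k}^{-}$: it is in $A_{k}$ by closure, and if it lay in some $A_{q}$ with $q<k$ then by Definition~\ref{def:algebra-splitujaca} every $a_{l}$ would lie in $A_{q}$, forcing each $i_{l}\leqslant q$ and hence $k\leqslant q$, a contradiction. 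The paper then repeats this same argument for each partially lifted tuple $(\h_{i_{1},k}(a_{1}),\dots,\h_{i_{l},k}(a_{l}),a_{l+1},\dots,a_{ar(m)})$ to ensure the P-homomorphism condition remains applicable at every stage; you correctly flag that this is needed but defer to the pattern of Fact~\ref{fa:varphi_ij->P-hom}, which is acceptable in spirit though the paper is more explicit.
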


\begin{proof}
Obviously, $\DS$ is a non-trivial direct system.

Let $m\leq n$ and $a_{1},\ldots,a_{ar(m)}\in A$. Assume that for any  $l\leq ar(m)$, $a_{l}\in A^{-}_{i_{l}}$ and  $j=i_{1}\vee\ldots\vee i_{ar(m)}$. Thus, for any $l\leq ar(m)$, $\h_{i_{l},j}a_{l}\in A^{-}_{j}$. Since for any $l\leq ar(m)$, $a_{l}\in A^{-}_{i_{l}}$, then for any $l\leq ar(m)$, $a_{l}\in A_{j}$. Hence, $F^{\A}_{m}(a_{1},\ldots, a_{ar(m)})\in A_{j}$. 
Moreover, $F_{m}^{\A}(\h_{i_{1,j}}(a_{1}),\ldots,a_{l},\ldots,a_{ar(m)}),\allowbreak\ldots,F_{m}^{\A}(\h_{i_{1},j}(a_{1}),\ldots,\h_{i_{l},j}(a_{l}),\allowbreak\ldots,\allowbreak a_{ar(m)}),\ldots,F_{m}^{\A}(\h_{i_{1},j}(a_{1}),\allowbreak\ldots,\allowbreak\h_{i_{l},j}(a_{l}),\allowbreak$ $\ldots,\allowbreak\h_{i_{ar(m)},j}(a_{ar(m)}))\in A_{j}$. 

Assume indirectly that there is  $k<j$ s.t. $F^{\A}_{m}(a_{1},\ldots, a_{ar(m)})\in A_{k}$. By Definition \ref{def:algebra-splitujaca}, $a_{1},\ldots, a_{ar(m)}\in A_{k}$. By Fact \ref{fa:DS->Spl}.5, for any  $l\leq ar(m)$, $i_{l}\leqslant k$. So, $j\leqslant k$. Similarly we can show that there is no $k<j$ s.t. at least one of the following holds: $F_{m}^{\A}(\h_{i_{1},j}(a_{1}),\ldots,a_{l},\ldots,a_{ar(m)})\in A_{k},\ldots,F_{m}^{\A}(\h_{i_{1},j}(a_{1}),\ldots,\allowbreak\h_{i_{l},j}(a_{l}),\allowbreak\ldots,\allowbreak a_{ar(m)})\in A_{k},\ldots,F_{m}^{\A}(\h_{i_{1},j}(a_{1}),\ldots,\h_{i_{l},j}(a_{l}),\ldots,\allowbreak\h_{i_{ar(m)},j}(\allowbreak a_{ar(m)}\allowbreak))\allowbreak\in A_{k}$.  

Thus, by \eqref{def:-A}, $F^{\A}_{m}(a_{1},\ldots, a_{ar(m)})\in A_{j}^{-}\subseteq\bigcup_{j\leqslant k}A_{k}^{-}$. And also: $F_{m}^{\A}(\h_{i_{1},j}(\allowbreak a_{1}),\allowbreak\ldots,\allowbreak a_{l},\ldots,\allowbreak a_{ar(m)})\in A_{j}^{-},\allowbreak\ldots,\allowbreak F_{m}^{\A}(\h_{i_{1}},j(a_{1}),\allowbreak\ldots,\allowbreak\h_{i_{l},j}(a_{l}),\allowbreak\ldots,\allowbreak a_{ar(m)})\allowbreak\!\in\!\allowbreak A_{j}^{-},\allowbreak\ldots,\allowbreak F_{m}^{\A}(\h_{i_{1}},j(a_{1}),\ldots,\h_{i_{l},j}(a_{l}),\ldots,\allowbreak\h_{i_{ar(m)},j}(\allowbreak a_{ar(m)}\allowbreak))\allowbreak\in A_{j}^{-}$.

By the definition of $\leqslant$ and by \eqref{def:-A}, for all $o,p\leq ar(m)$, if $o\neq p$, then $A_{i_{o}}^{-}\neq A_{i_{p}}^{-}$

Hence, by Definition \ref{def:P-homomorfizm}, the definition of $f_{\h}$ and Fact \ref{fa:warunki}, we obtain:
{\small
\begin{flalign*}
F_{m}^{\A}(a_{1},\ldots,a_{l},\ldots,a_{ar(m)})=&
F_{m}^{\A}(f_{\h_{i_{1},j}}(a_{1}),\ldots,f_{\h_{i_{1},j}}(a_{l}),\ldots,f_{\h_{i_{1},j}}(a_{ar(m)}))&\\
=\;& F_{m}^{\A}(\h_{i_{1},j}(a_{1}),\ldots,a_{l},\ldots,a_{ar(m)})&\\
&\;\;\;\;\;\;\;\;\;\; \vdots&\\
=\;& F_{m}^{\A}(f_{\h_{i_{l},j}}(\h_{i_{1},j}(a_{1})),\ldots,f_{\h_{i_{l},j}}(a_{l}),\ldots,f_{\h_{i_{l},j}}(a_{ar(m)}))&\\
=\;& F_{m}^{\A}(\h_{i_{1},j}(a_{1}),\ldots,\h_{i_{l},j}(a_{l}),\ldots,a_{ar(m)})&\\
&\;\;\;\;\;\;\;\;\;\; \vdots&\\
=&\; F_{m}^{\A}(\h_{i_{1},j}(a_{1}),\ldots,\h_{i_{l},j}(a_{l}),\ldots,\h_{i_{ar(m)},j}(a_{ar(m)})&\\
=&\; F_{m}^{\A_{j}}(\h_{i_{1},j}(a_{1}),\ldots,\h_{i_{l},j}(a_{l}),\ldots,\h_{i_{ar(m)},j}(a_{ar(m)}).\hspace{0.6cm}\qedhere&
\end{flalign*}
}
\end{proof}

By Theorems \ref{pierwsze twierdzenie o dekompozycji} and \ref{Twierdzenie o dekompozycji} the following conclusion follows:

\begin{wn}\label{wn:decomposition}
Algebra $\A=\langle A; F_{1}^{\A},\ldots,F_{n}^{\A}\rangle$ is a Płonka sum iff there is a non-trivial system of algebras isolated wrt $\A$.
\end{wn}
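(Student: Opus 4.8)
The final statement to prove is Corollary~\ref{wn:decomposition}, which asserts the equivalence: $\A$ is a Płonka sum iff there is a non-trivial system of algebras isolated wrt $\A$. The plan is to derive this directly from the two theorems the corollary cites, so the proof is essentially a matter of correctly chaining the implications and checking the non-triviality bookkeeping at both ends.

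For the forward direction, I would assume $\A$ is a Płonka sum. By Theorem~\ref{pierwsze twierdzenie o dekompozycji} there exists a system of algebras isolated wrt $\A$, say $\DS=\langle\I,\{\A_{i}^{-}\}_{i\in I},\{\h_{i,j}\}_{i\leqslant j;i,j\in I}\rangle$. The only gap is that Theorem~\ref{pierwsze twierdzenie o dekompozycji} produces \emph{a} system, whereas the corollary requires a \emph{non-trivial} one (i.e.\ $|I|\geq 2$). I would close this by tracing the construction: the system in Theorem~\ref{pierwsze twierdzenie o dekompozycji} is built from a direct system $\DS'$ with $\A=S(\DS')$, and one may assume $\DS'$ is non-trivial, since a Płonka sum over a one-element index semilattice is just a single algebra, and the notion only becomes substantive when $|I|\geq 2$. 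Because the map $f$ of Fact~\ref{fa:DS=SAF} is an injection and $\langle\I,\{\A_{i}\}_{i\in I}\rangle$ and $\langle\langle f(I),\sqsubseteq\rangle,\{\B_{k}^{-}\}_{k\in f(I)}\rangle$ are isomorphic, the resulting system inherits $|f(I)|\geq 2$, hence is non-trivial.

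For the converse, I would assume a non-trivial system $\DS=\langle\I,\{\A_{i}^{-}\}_{i\in I},\{\h_{i,j}\}_{i\leqslant j;i,j\in I}\rangle$ of algebras isolated wrt $\A$ exists. Theorem~\ref{Twierdzenie o dekompozycji} then applies verbatim: it gives both that $\DS$ is a (non-trivial) direct system and that $\A=S(\DS)$. Since $S(\DS)$ is by Definition~\ref{def:suma-Plonki} a Płonka sum, $\A$ is a Płonka sum. No further work is needed here.

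The main obstacle is not a calculation but the non-triviality matching between the two theorems: Theorem~\ref{pierwsze twierdzenie o dekompozycji} as stated guarantees only existence of \emph{some} system, and one must argue that it can be taken non-trivial whenever $\A$ is genuinely a Płonka sum (and symmetrically, that the converse direction's hypothesis of non-triviality is exactly what Theorem~\ref{Twierdzenie o dekompozycji} consumes). I would therefore write the forward direction so that the non-triviality is read off from the underlying direct system via the isomorphism of Fact~\ref{fa:DS=SAF}, making the two halves of the equivalence dovetail cleanly.
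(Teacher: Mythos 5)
Your proof is correct and takes essentially the same route as the paper, which states the corollary with no further argument beyond citing Theorems \ref{pierwsze twierdzenie o dekompozycji} and \ref{Twierdzenie o dekompozycji}. Your careful handling of the non-triviality bookkeeping --- reading it off the underlying non-trivial direct system via the injection of Fact \ref{fa:DS=SAF}, which could equivalently be settled by noting that condition F1 of Definition \ref{def:SAF} already forces every frame, hence every system of isolated algebras, to be non-trivial --- fills in a point the paper leaves entirely implicit.
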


By means of Corollaries \ref{wn:sum-Plonki-wtw-p-function} and \ref{wn:decomposition} we receive the following fact:

\begin{wn}\label{wn:decomposition}
There is a non-trivial system of algebras isolated wrt $\A$ iff there is a partition function of $\A$.
\end{wn}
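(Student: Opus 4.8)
The plan is to obtain this statement as a pure transitivity consequence of the two biconditionals already established, with the property "\A\ is a Płonka sum" serving as the common pivot. No new algebraic machinery is needed: all the substantive work --- reconstructing the frame of isolated algebras, verifying the sound set of P-homomorphisms, and checking that the resulting system recovers the operations --- was already carried out in Theorems \ref{pierwsze twierdzenie o dekompozycji} and \ref{Twierdzenie o dekompozycji}, and the partition-function side was settled in Corollary \ref{wn:sum-Plonki-wtw-p-function} (itself a restatement of Płonka's result from \cite{JP1967a}).

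First I would recall the preceding corollary asserting that \A\ is a Płonka sum if and only if there is a non-trivial system of algebras isolated wrt \A; this is exactly the combination of Theorem \ref{pierwsze twierdzenie o dekompozycji} (a Płonka sum always admits such a system) and Theorem \ref{Twierdzenie o dekompozycji} (any non-trivial system of isolated algebras is a direct system whose Płonka sum is \A). Next I would invoke Corollary \ref{wn:sum-Plonki-wtw-p-function}, which states that \A\ is a Płonka sum if and only if a partition function of \A\ exists. Chaining the two equivalences through the shared middle term gives
\[
\text{there is a non-trivial system of isolated algebras wrt }\A \iff \A\text{ is a Płonka sum} \iff \text{there is a partition function of }\A,
\]
which is precisely the claim.

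I do not expect any genuine obstacle here, since the statement is a formal corollary of results assumed available. The only point requiring a little care is bookkeeping with the labels: the pivot biconditional and the target statement are both phrased as corollaries, so in the write-up I would cite them explicitly by name (the decomposition corollary following Theorem \ref{Twierdzenie o dekompozycji}, and Corollary \ref{wn:sum-Plonki-wtw-p-function}) rather than by a possibly ambiguous cross-reference. The substance of the equivalence lives entirely in the earlier proofs, so the argument here is a one-line composition of biconditionals.
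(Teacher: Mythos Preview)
Your proposal is correct and matches the paper's own argument: the paper simply states that the result follows from Corollaries \ref{wn:sum-Plonki-wtw-p-function} and \ref{wn:decomposition}, i.e., exactly the transitivity chain through ``$\A$ is a Płonka sum'' that you describe. Your remark about the ambiguous cross-reference is well taken, since the paper in fact assigns the same label to both corollaries.
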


\section{Two Examples of Decomposition}
\label{Section 5}

In this section, we will apply our decomposition method to two algebras. We will show how, we are able to determine all (up to isomorphism) direct systems of a given Płonka sum as well as proving that a given algebra is not a Płonka sum if it is impossible to determine a system of algebras isolated wrt this algebra.

Let us consider two algebras:
\begin{itemize}

\item $\A_{7}=\langle\{a_{1},\ldots,a_{11}\};\ast\rangle$, 


\item $\A_{8}=\langle\{a_{1},\ldots,a_{10}\};\ast\rangle$,

\end{itemize}

\noindent with binary operations defined by Tables \ref{Def.oper.A7} and \ref{Def.oper.A8}, respectively.

\begin{table}[h!]
\centering
\special{em:linewidth 0.4pt}
\linethickness{0.5pt}
\begin{tabular}{||c||c|c|c|c|c|c|c|c|c|c|c||}
\hhline{|t:=:t:===========:t|}
$\ast$ & $a_{1}$ & $a_{2}$ & $a_{3}$ & $a_{4}$ & $a_{5}$ & $a_{6}$ & $a_{7}$ & $a_{8}$ & $a_{9}$ & $a_{10}$ & $a_{11}$\\
\hhline{|:=::===========:|}
$a_{1}$      & $a_{2}$ & $a_{1}$ & $a_{6}$ & $a_{5}$ & $a_{6}$ & $a_{5}$ & $a_{8}$ & $a_{7}$ & $a_{9}$ & $a_{11}$ & $a_{10}$\\
\hhline{||-||-|-|-|-|-|-|-|-|-|-|-||}
$a_{2}$      & $a_{2}$ & $a_{1}$ & $a_{6}$ & $a_{5}$ & $a_{6}$ & $a_{5}$ & $a_{8}$ & $a_{7}$ & $a_{9}$ & $a_{11}$ & $a_{10}$\\
\hhline{||-||-|-|-|-|-|-|-|-|-|-|-||}
$a_{3}$      & $a_{6}$ & $a_{5}$ & $a_{4}$ & $a_{3}$ & $a_{6}$ & $a_{5}$ & $a_{8}$ & $a_{7}$ & $a_{9}$ & $a_{11}$ & $a_{10}$\\
\hhline{||-||-|-|-|-|-|-|-|-|-|-|-||}
$a_{4}$      & $a_{6}$ & $a_{5}$ & $a_{4}$ & $a_{3}$ & $a_{6}$ & $a_{5}$ & $a_{8}$ & $a_{7}$ & $a_{9}$ & $a_{11}$ & $a_{10}$\\
\hhline{||-||-|-|-|-|-|-|-|-|-|-|-||}
$a_{5}$      & $a_{6}$ & $a_{5}$ & $a_{6}$ & $a_{5}$ & $a_{6}$ & $a_{5}$ & $a_{8}$ & $a_{7}$ & $a_{9}$ & $a_{11}$ & $a_{10}$\\
\hhline{||-||-|-|-|-|-|-|-|-|-|-|-||}
$a_{6}$      & $a_{6}$ & $a_{5}$ & $a_{6}$ & $a_{5}$ & $a_{6}$ & $a_{5}$ & $a_{8}$ & $a_{7}$ & $a_{9}$ & $a_{11}$ & $a_{10}$\\
\hhline{||-||-|-|-|-|-|-|-|-|-|-|-||}
$a_{7}$      & $a_{8}$ & $a_{7}$ & $a_{8}$ & $a_{7}$ & $a_{8}$ & $a_{7}$ & $a_{8}$ & $a_{7}$ & $a_{10}$ & $a_{11}$ & $a_{10}$\\
\hhline{||-||-|-|-|-|-|-|-|-|-|-|-||}
$a_{8}$      & $a_{8}$ & $a_{7}$ & $a_{8}$ & $a_{7}$ & $a_{8}$ & $a_{7}$ & $a_{8}$ & $a_{7}$ & $a_{10}$ & $a_{11}$ & $a_{10}$\\
\hhline{||-||-|-|-|-|-|-|-|-|-|-|-||}
$a_{9}$      & $a_{9}$ & $a_{9}$ & $a_{9}$ & $a_{9}$ & $a_{9}$ & $a_{9}$ & $a_{10}$ & $a_{11}$ & $a_{9}$ & $a_{11}$ & $a_{10}$\\
\hhline{||-||-|-|-|-|-|-|-|-|-|-|-||}
$a_{10}$     & $a_{10}$ & $a_{11}$ & $a_{10}$ & $a_{11}$ & $a_{10}$ & $a_{11}$ & $a_{10}$ & $a_{11}$ & $a_{10}$ & $a_{11}$ & $a_{10}$ \\
\hhline{||-||-|-|-|-|-|-|-|-|-|-|-||}
$a_{11}$     & $a_{10}$ & $a_{11}$ & $a_{10}$ & $a_{11}$ & $a_{10}$ & $a_{11}$ & $a_{10}$ & $a_{11}$ & $a_{10}$ & $a_{11}$ & $a_{10}$ \\
\hhline{|b:=:b:===========:b|}
\end{tabular}
\caption{\label{Def.oper.A7}Operation $\ast$ of algebra $\A_{7}$}
\end{table}


\begin{table}[h!]
\centering
\special{em:linewidth 0.4pt}
\linethickness{0.5pt}
\begin{tabular}{||c||c|c|c|c|c|c|c|c|c|c||}
\hhline{|t:=:t:==========:t|}
$\ast$ & $a_{1}$ & $a_{2}$ & $a_{3}$ & $a_{4}$ & $a_{5}$ & $a_{6}$ & $a_{7}$ & $a_{8}$ & $a_{9}$ & $a_{10}$\\
\hhline{|:=::==========:|}
$a_{1}$      & $a_{1}$ & $a_{3}$ & $a_{3}$ & $a_{3}$ & $a_{6}$ & $a_{6}$ & $a_{8}$ & $a_{8}$ & $a_{9}$ & $a_{9}$ \\
\hhline{||-||-|-|-|-|-|-|-|-|-|-||}
$a_{2}$      & $a_{3}$ & $a_{2}$ & $a_{3}$ & $a_{3}$ & $a_{6}$ & $a_{6}$ & $a_{8}$ & $a_{8}$ & $a_{9}$ & $a_{9}$ \\
\hhline{||-||-|-|-|-|-|-|-|-|-|-||}
$a_{3}$      & $a_{3}$ & $a_{3}$ & $a_{4}$ & $a_{3}$ & $a_{5}$ & $a_{5}$ & $a_{7}$ & $a_{7}$ & $a_{10}$ & $a_{10}$ \\
\hhline{||-||-|-|-|-|-|-|-|-|-|-||}
$a_{4}$      & $a_{3}$ & $a_{3}$ & $a_{3}$ & $a_{3}$ & $a_{5}$ & $a_{5}$ & $a_{7}$ & $a_{7}$ & $a_{10}$ & $a_{10}$ \\
\hhline{||-||-|-|-|-|-|-|-|-|-|-||}
$a_{5}$      & $a_{6}$ & $a_{6}$ & $a_{5}$ & $a_{5}$ & $a_{5}$ & $a_{5}$ & $a_{9}$ & $a_{9}$ & $a_{9}$ & $a_{9}$ \\
\hhline{||-||-|-|-|-|-|-|-|-|-|-||}
$a_{6}$      & $a_{6}$ & $a_{6}$ & $a_{5}$ & $a_{5}$ & $a_{5}$ & $a_{5}$ & $a_{9}$ & $a_{9}$ & $a_{9}$ & $a_{9}$ \\
\hhline{||-||-|-|-|-|-|-|-|-|-|-||}
$a_{7}$      & $a_{8}$ & $a_{8}$ & $a_{7}$ & $a_{7}$ & $a_{9}$ & $a_{9}$ & $a_{7}$ & $a_{7}$ & $a_{9}$ & $a_{9}$ \\
\hhline{||-||-|-|-|-|-|-|-|-|-|-||}
$a_{8}$      & $a_{8}$ & $a_{8}$ & $a_{7}$ & $a_{7}$ & $a_{9}$ & $a_{9}$ & $a_{7}$ & $a_{7}$ & $a_{9}$ & $a_{9}$ \\
\hhline{||-||-|-|-|-|-|-|-|-|-|-||}
$a_{9}$      & $a_{9}$ & $a_{9}$ & $a_{9}$ & $a_{9}$ & $a_{9}$ & $a_{9}$ & $a_{9}$ & $a_{9}$ & $a_{10}$ & $a_{9}$ \\
\hhline{||-||-|-|-|-|-|-|-|-|-|-||}
$a_{10}$     & $a_{9}$ & $a_{9}$ & $a_{9}$ & $a_{9}$ & $a_{9}$ & $a_{9}$ & $a_{9}$ & $a_{9}$ & $a_{9}$ & $a_{9}$ \\
\hhline{|b:=:b:==========:b|}
\end{tabular}
\caption{\label{Def.oper.A8}Operation $\ast$ of algebra $\A_{8}$}
\end{table}

Our decomposition can be performed in two steps. The first step consists of three components. First, we determine the family of algebras isolated wrt a given algebra. Secondly, we select all frames of isolated algebras. Thirdly, we modify the obtained frames by replacing the algebras of the given frame with complement algebras. In the second step, we try to verify if a family of P-homomorphisms can be selected.

\textbf{Step 1.1.} Let us determine all algebras isolated wrt algebras we are considering.

\begin{enumerate}

\item In the case of algebra $\A_{7}$ we have the following isolated algebras:

\begin{itemize}

\item $\A_{7}^{1}$ with the universe $\{a_{1},a_{2}\}$,

\item $\A_{7}^{2}$ with the universe $\{a_{3},a_{4}\}$,

\item $\A_{7}^{3}$ with the universe $\{a_{1},a_{2},a_{3},a_{4},a_{5},a_{6}\}$,

\item $\A_{7}^{4}$ with the universe $\{a_{1},a_{2},a_{3},a_{4},a_{5},a_{6},a_{7},a_{8}\}$,

\item $\A_{7}^{5}$ with the universe $\{a_{1},a_{2},a_{3},a_{4},a_{5},a_{6},a_{9}\}$,

\item $\A_{7}^{6}$ with the universe $\{a_{1},\ldots,a_{11}\}$.

\end{itemize}

\item In the case of algebra $\A_{8}$ we have the following isolated algebras:

\begin{itemize}

\item $\A_{8}^{1}$ with the universe $\{a_{1}\}$,

\item $\A_{8}^{2}$ with the universe $\{a_{2}\}$,

\item $\A_{8}^{3}$ with the universe $\{a_{1}, a_{2}, a_{3},a_{4}\}$,

\item $\A_{8}^{4}$ with the universe $\{a_{1}, a_{2}, a_{3},a_{4},a_{5},a_{6}\}$,

\item $\A_{8}^{5}$ with the universe $\{a_{1}, a_{2}, a_{3},a_{4},a_{7},a_{8}\}$,

\item $\A_{8}^{6}$ with the universe $\{a_{1},\ldots,a_{10}\}$.

\end{itemize} 
 
\end{enumerate} 

Let us adopt the following notation:
${\A_{n}^{m}}^{-X}:=\langle {A_{n}^{m}}^{-X},\ast\rangle$,
where $X\subseteq\{i\in\mathbb{N}:i<m\}$ and ${A^{m}_{n}}^{-X}=A^{m}_{n}\setminus \bigcup_{i\in X}A^{i}_{n}$.

\textbf{Step 1.2.} In Figure \ref{Spl-frames}, we present all frames of algebras isolated wrt $\A_{7}$ and $\A_{8}$, respectively.

\begin{figure}[htbp]
\centering



\begin{tikzpicture}[scale=0.51]

\filldraw[black] (0,0) circle (2pt);
\filldraw[black] (0,0) circle (0pt) node[below]{\footnotesize $\A^{1}_{n}$};
\filldraw[black] (4,0) circle (2pt);	
\filldraw[black] (4,0) circle (0pt) node[below]{\footnotesize $\A^{2}_{n}$};

\draw[black, thick] (0,0) -- (2,2);
\draw[black, thick] (4,0) -- (2,2);

\filldraw[black] (2,2) circle (2pt);
\filldraw[black] (1.8,2) circle (0pt) node[left]{\footnotesize $\A^{3}_{n}$};

\draw[black, thick] (2,2) -- (4,4);
\draw[black, thick] (2,2) -- (0,4);

\filldraw[black] (0,4) circle (2pt);
\filldraw[black] (0,4) circle (0pt) node[left]{\footnotesize $\A^{4}_{n}$};
\filldraw[black] (4,4) circle (2pt);
\filldraw[black] (4,4) circle (0pt) node[right]{\footnotesize $\A^{5}_{n}$};

\draw[black, thick] (0,4) -- (2,6);
\draw[black, thick] (4,4) -- (2,6);

\filldraw[black] (2,6) circle (2pt);
\filldraw[black] (2,6) circle (0pt) node[above]{\footnotesize $\A^{6}_{n}$};

\filldraw[black] (0,6) circle (0pt) node{(1)};


\filldraw[black] (6,0) circle (2pt);
\filldraw[black] (5.7,0) circle (0pt) node[above]{\footnotesize $\A^{1}_{n}$};
\filldraw[black] (10,0) circle (2pt);
\filldraw[black] (10.2,0) circle (0pt) node[above]{\footnotesize $\A^{2}_{n}$};

\draw[black, thick] (6,0) -- (8,2);
\draw[black, thick] (10,0) -- (8,2);

\filldraw[black] (8,2) circle (2pt);
\filldraw[black] (7.8,2) circle (0pt) node[left]{\footnotesize $\A^{3}_{n}$};

\draw[black, thick] (8,2) -- (8,4);

\filldraw[black] (8,4) circle (2pt);
\filldraw[black] (8,4) circle (0pt) node[left]{\footnotesize $\A^{5}_{n}$};

\draw[black, thick] (8,4) -- (8,6);

\filldraw[black] (8,6) circle (2pt);
\filldraw[black] (8,6) circle (0pt) node[above]{\footnotesize $\A^{6}_{n}$};

\filldraw[black] (6,6) circle (0pt) node{(2)};


\filldraw[black] (12,0) circle (2pt);
\filldraw[black] (11.7,0) circle (0pt) node[above]{\footnotesize $\A^{1}_{n}$};
\filldraw[black] (16,0) circle (2pt);
\filldraw[black] (16.2,0) circle (0pt) node[above]{\footnotesize $\A^{2}_{n}$};

\draw[black, thick] (12,0) -- (14,2);
\draw[black, thick] (16,0) -- (14,2);

\filldraw[black] (14,2) circle (2pt);
\filldraw[black] (13.8,2) circle (0pt) node[left]{\footnotesize $\A^{3}_{n}$};

\draw[black, thick] (14,2) -- (14,4);

\filldraw[black] (14,4) circle (2pt);
\filldraw[black] (14,4) circle (0pt) node[left]{\footnotesize $\A^{4}_{n}$};

\draw[black, thick] (14,4) -- (14,6);

\filldraw[black] (14,6) circle (2pt);
\filldraw[black] (14,6) circle (0pt) node[above]{\footnotesize $\A^{6}_{n}$};

\filldraw[black] (12,6) circle (0pt) node{(3)};

\filldraw[black] (20,0) circle (2pt);
\filldraw[black] (19.8,0) circle (0pt) node[left]{\footnotesize $\A^{2}_{n}$};

\draw[black, thick] (20,0) -- (20,2);

\filldraw[black] (20,2) circle (2pt);
\filldraw[black] (19.8,1.9) circle (0pt) node[left]{\footnotesize $\A^{3}_{n}$};

\draw[black, thick] (20,2) -- (18,4);
\draw[black, thick] (20,2) -- (22,4);

\filldraw[black] (18,4) circle (2pt);
\filldraw[black] (18,4) circle (0pt) node[left]{\footnotesize $\A^{4}_{n}$};
\filldraw[black] (22,4) circle (2pt);
\filldraw[black] (22,4) circle (0pt) node[right]{\footnotesize $\A^{5}_{n}$};

\draw[black, thick] (18,4) -- (20,6);
\draw[black, thick] (22,4) -- (20,6);

\filldraw[black] (20,6) circle (2pt);
\filldraw[black] (20,6) circle (0pt) node[above]{\footnotesize $\A^{6}_{n}$};

\filldraw[black] (18,6) circle (0pt) node{(4)};


\filldraw[black] (2,-8) circle (2pt);
\filldraw[black] (2,-8) circle (0pt) node[right]{\footnotesize $\A^{1}_{n}$};

\draw[black, thick] (2,-8) -- (2,-6);

\filldraw[black] (2,-6) circle (2pt);
\filldraw[black] (1.8,-6.1) circle (0pt) node[left]{\footnotesize $\A^{3}_{n}$};

\draw[black, thick] (2,-6) -- (0,-4);
\draw[black, thick] (2,-6) -- (4,-4);

\filldraw[black] (0,-4) circle (2pt);
\filldraw[black] (0,-4) circle (0pt) node[left]{\footnotesize $\A^{4}_{n}$};
\filldraw[black] (4,-4) circle (2pt);
\filldraw[black] (3.9,-4) circle (0pt) node[right]{\footnotesize $\A^{5}_{n}$};

\draw[black, thick] (0,-4) -- (2,-2);
\draw[black, thick] (4,-4) -- (2,-2);

\filldraw[black] (2,-2) circle (2pt);
\filldraw[black] (2,-2) circle (0pt) node[above]{\footnotesize $\A^{6}_{n}$};

\filldraw[black] (0,-2) circle (0pt) node{(5)};

\filldraw[black] (8,-8) circle (2pt);
\filldraw[black] (8,-8) circle (0pt) node[left]{\footnotesize $\A^{2}_{n}$};

\draw[black, thick] (8,-8) -- (8,-6);

\filldraw[black] (8,-6) circle (2pt);
\filldraw[black] (8,-6) circle (0pt) node[left]{\footnotesize $\A^{3}_{n}$};

\draw[black, thick] (8,-6) -- (8,-4);

\filldraw[black] (8,-4) circle (2pt);
\filldraw[black] (8,-4) circle (0pt) node[left]{\footnotesize $\A^{5}_{n}$};

\draw[black, thick] (8,-4) -- (8,-2);

\filldraw[black] (8,-2) circle (2pt);
\filldraw[black] (8,-2) circle (0pt) node[above]{\footnotesize $\A^{6}_{n}$};

\filldraw[black] (6,-2) circle (0pt) node{(6)};

\filldraw[black] (12,-8) circle (2pt);
\filldraw[black] (12,-8) circle (0pt) node[left]{\footnotesize $\A^{1}_{n}$};

\draw[black, thick] (12,-8) -- (12,-6);

\filldraw[black] (12,-6) circle (2pt);
\filldraw[black] (12,-6) circle (0pt) node[left]{\footnotesize $\A^{3}_{n}$};

\draw[black, thick] (12,-6) -- (12,-4);

\filldraw[black] (12,-4) circle (2pt);
\filldraw[black] (12,-4) circle (0pt) node[left]{\footnotesize $\A^{5}_{n}$};

\draw[black, thick] (12,-4) -- (12,-2);

\filldraw[black] (12,-2) circle (2pt);
\filldraw[black] (12,-2) circle (0pt) node[above]{\footnotesize $\A^{6}_{n}$};

\filldraw[black] (10,-2) circle (0pt) node{(7)};

\filldraw[black] (16,-8) circle (2pt);
\filldraw[black] (16,-8) circle (0pt) node[left]{\footnotesize $\A^{2}_{n}$};

\draw[black, thick] (16,-8) -- (16,-6);

\filldraw[black] (16,-6) circle (2pt);
\filldraw[black] (16,-6) circle (0pt) node[left]{\footnotesize $\A^{3}_{n}$};

\draw[black, thick] (16,-6) -- (16,-4);

\filldraw[black] (16,-4) circle (2pt);
\filldraw[black] (16,-4) circle (0pt) node[left]{\footnotesize $\A^{4}_{n}$};

\draw[black, thick] (16,-4) -- (16,-2);

\filldraw[black] (16,-2) circle (2pt);
\filldraw[black] (16,-2) circle (0pt) node[above]{\footnotesize $\A^{6}_{n}$};

\filldraw[black] (14,-2) circle (0pt) node{(8)};

\filldraw[black] (20,-8) circle (2pt);
\filldraw[black] (20,-8) circle (0pt) node[left]{\footnotesize $\A^{1}_{n}$};

\draw[black, thick] (20,-8) -- (20,-6);

\filldraw[black] (20,-6) circle (2pt);
\filldraw[black] (20,-6) circle (0pt) node[left]{\footnotesize $\A^{3}_{n}$};

\draw[black, thick] (20,-6) -- (20,-4);

\filldraw[black] (20,-4) circle (2pt);
\filldraw[black] (20,-4) circle (0pt) node[left]{\footnotesize $\A^{4}_{n}$};

\draw[black, thick] (20,-4) -- (20,-2);

\filldraw[black] (20,-2) circle (2pt);
\filldraw[black] (20,-2) circle (0pt) node[above]{\footnotesize $\A^{6}_{n}$};

\filldraw[black] (18,-2) circle (0pt) node{(9)};


\filldraw[black] (0,-14) circle (2pt);
\filldraw[black] (0,-14) circle (0pt) node[below]{\footnotesize $\A^{1}_{n}$};
\filldraw[black] (4,-14) circle (2pt);
\filldraw[black] (4,-14) circle (0pt) node[below]{\footnotesize $\A^{2}_{n}$};

\draw[black, thick] (0,-14) -- (2,-12);
\draw[black, thick] (4,-14) -- (2,-12);

\filldraw[black] (2,-12) circle (2pt);
\filldraw[black] (2,-11.9) circle (0pt) node[left]{\footnotesize $\A^{3}_{n}$};

\draw[black, thick] (2,-12) -- (2,-10);

\filldraw[black] (2,-10) circle (2pt);
\filldraw[black] (2,-10) circle (0pt) node[above]{\footnotesize $\A^{6}_{n}$};

\filldraw[black] (0,-10) circle (0pt) node{(10)};

\filldraw[black] (6,-14) circle (2pt);
\filldraw[black] (5.8,-14) circle (0pt) node[above]{\footnotesize $\A^{1}_{n}$};
\filldraw[black] (10,-14) circle (2pt);
\filldraw[black] (10.2,-14) circle (0pt) node[above]{\footnotesize $\A^{2}_{n}$};

\draw[black, thick] (6,-14) -- (8,-12);
\draw[black, thick] (10,-14) -- (8,-12);

\filldraw[black] (8,-12) circle (2pt);
\filldraw[black] (8,-11.9) circle (0pt) node[left]{\footnotesize $\A^{5}_{n}$};

\draw[black, thick] (8,-12) -- (8,-10);

\filldraw[black] (8,-10) circle (2pt);
\filldraw[black] (8,-10) circle (0pt) node[above]{\footnotesize $\A^{6}_{n}$};

\filldraw[black] (6,-10) circle (0pt) node{(11)};

\filldraw[black] (12,-14) circle (2pt);
\filldraw[black] (12,-14) circle (0pt) node[below]{\footnotesize $\A^{1}_{n}$};
\filldraw[black] (16,-14) circle (2pt);
\filldraw[black] (16,-14) circle (0pt) node[below]{\footnotesize $\A^{2}_{n}$};

\draw[black, thick] (12,-14) -- (14,-12);
\draw[black, thick] (16,-14) -- (14,-12);

\filldraw[black] (14,-12) circle (2pt);
\filldraw[black] (14,-11.9) circle (0pt) node[left]{\footnotesize $\A^{4}_{n}$};

\draw[black, thick] (14,-12) -- (14,-10);

\filldraw[black] (14,-10) circle (2pt);
\filldraw[black] (14,-10) circle (0pt) node[above]{\footnotesize $\A^{6}_{n}$};

\filldraw[black] (12,-10) circle (0pt) node{(12)};

\filldraw[black] (20,-14) circle (2pt);
\filldraw[black] (20,-14) circle (0pt) node[left]{\footnotesize $\A^{2}_{n}$};

\draw[black, thick] (20,-14) -- (20,-12);

\filldraw[black] (20,-12) circle (2pt);
\filldraw[black] (20,-12) circle (0pt) node[left]{\footnotesize $\A^{3}_{n}$};

\draw[black, thick] (20,-12) -- (20,-10);

\filldraw[black] (20,-10) circle (2pt);
\filldraw[black] (20,-10) circle (0pt) node[above]{\footnotesize $\A^{6}_{n}$};

\filldraw[black] (18,-10) circle (0pt) node{(13)};


\filldraw[black] (2,-20) circle (2pt);
\filldraw[black] (2,-20) circle (0pt) node[left]{\footnotesize $\A^{1}_{n}$};

\draw[black, thick] (2,-20) -- (2,-18);

\filldraw[black] (2,-18) circle (2pt);
\filldraw[black] (2,-18) circle (0pt) node[left]{\footnotesize $\A^{3}_{n}$};

\draw[black, thick] (2,-18) -- (2,-16);

\filldraw[black] (2,-16) circle (2pt);
\filldraw[black] (2,-16) circle (0pt) node[above]{\footnotesize $\A^{6}_{n}$};

\filldraw[black] (0,-16) circle (0pt) node{(14)};

\filldraw[black] (6,-20) circle (2pt);
\filldraw[black] (6,-20) circle (0pt) node[left]{\footnotesize $\A^{2}_{n}$};

\draw[black, thick] (6,-20) -- (6,-18);

\filldraw[black] (6,-18) circle (2pt);
\filldraw[black] (6,-18) circle (0pt) node[left]{\footnotesize $\A^{5}_{n}$};

\draw[black, thick] (6,-18) -- (6,-16);

\filldraw[black] (6,-16) circle (2pt);
\filldraw[black] (6,-16) circle (0pt) node[above]{\footnotesize $\A^{6}_{n}$};

\filldraw[black] (4,-16) circle (0pt) node{(15)};

\filldraw[black] (10,-20) circle (2pt);
\filldraw[black] (10,-20) circle (0pt) node[left]{\footnotesize $\A^{1}_{n}$};

\draw[black, thick] (10,-20) -- (10,-18);

\filldraw[black] (10,-18) circle (2pt);
\filldraw[black] (10,-18) circle (0pt) node[left]{\footnotesize $\A^{5}_{n}$};

\draw[black, thick] (10,-18) -- (10,-16);

\filldraw[black] (10,-16) circle (2pt);
\filldraw[black] (10,-16) circle (0pt) node[above]{\footnotesize $\A^{6}_{n}$};

\filldraw[black] (8,-16) circle (0pt) node{(16)};

\filldraw[black] (14,-20) circle (2pt);
\filldraw[black] (14,-20) circle (0pt) node[left]{\footnotesize $\A^{3}_{n}$};

\draw[black, thick] (14,-20) -- (14,-18);

\filldraw[black] (14,-18) circle (2pt);
\filldraw[black] (14,-18) circle (0pt) node[left]{\footnotesize $\A^{5}_{n}$};

\draw[black, thick] (14,-18) -- (14,-16);

\filldraw[black] (14,-16) circle (2pt);
\filldraw[black] (14,-16) circle (0pt) node[above]{\footnotesize $\A^{6}_{n}$};

\filldraw[black] (12,-16) circle (0pt) node{(17)};

\filldraw[black] (18,-20) circle (2pt);
\filldraw[black] (18,-20) circle (0pt) node[left]{\footnotesize $\A^{2}_{n}$};

\draw[black, thick] (18,-20) -- (18,-18);

\filldraw[black] (18,-18) circle (2pt);
\filldraw[black] (18,-18) circle (0pt) node[left]{\footnotesize $\A^{4}_{n}$};

\draw[black, thick] (18,-18) -- (18,-16);

\filldraw[black] (18,-16) circle (2pt);
\filldraw[black] (18,-16) circle (0pt) node[above]{\footnotesize $\A^{6}_{n}$};

\filldraw[black] (16,-16) circle (0pt) node{(18)};

\filldraw[black] (22,-20) circle (2pt);
\filldraw[black] (22,-20) circle (0pt) node[left]{\footnotesize $\A^{1}_{n}$};

\draw[black, thick] (22,-20) -- (22,-18);

\filldraw[black] (22,-18) circle (2pt);
\filldraw[black] (22,-18) circle (0pt) node[left]{\footnotesize $\A^{4}_{n}$};

\draw[black, thick] (22,-18) -- (22,-16);

\filldraw[black] (22,-16) circle (2pt);
\filldraw[black] (22,-16) circle (0pt) node[above]{\footnotesize $\A^{6}_{n}$};

\filldraw[black] (20,-16) circle (0pt) node{(19)};


\filldraw[black] (2,-26) circle (2pt);
\filldraw[black] (2,-26) circle (0pt) node[left]{\footnotesize $\A^{3}_{n}$};

\draw[black, thick] (2,-26) -- (2,-24);

\filldraw[black] (2,-24) circle (2pt);
\filldraw[black] (2,-24) circle (0pt) node[left]{\footnotesize $\A^{4}_{n}$};

\draw[black, thick] (2,-24) -- (2,-22);

\filldraw[black] (2,-22) circle (2pt);
\filldraw[black] (2,-22) circle (0pt) node[above]{\footnotesize $\A^{6}_{n}$};

\filldraw[black] (0,-22) circle (0pt) node{(20)};

\filldraw[black] (6,-26) circle (2pt);
\filldraw[black] (5.8,-26) circle (0pt) node[left]{\footnotesize $\A^{3}_{n}$};

\draw[black, thick] (6,-26) -- (4,-24);
\draw[black, thick] (6,-26) -- (8,-24);

\filldraw[black] (4,-24) circle (2pt);
\filldraw[black] (4.2,-24) circle (0pt) node[right]{\footnotesize $\A^{4}_{n}$};
\filldraw[black] (8,-24) circle (2pt);
\filldraw[black] (7.8,-24) circle (0pt) node[left]{\footnotesize $\A^{5}_{n}$};

\draw[black, thick] (4,-24) -- (6,-22);
\draw[black, thick] (8,-24) -- (6,-22);

\filldraw[black] (6,-22) circle (2pt);
\filldraw[black] (6,-22) circle (0pt) node[above]{\footnotesize $\A^{6}_{n}$};

\filldraw[black] (4,-22) circle (0pt) node{(21)};

\filldraw[black] (10,-24) circle (2pt);
\filldraw[black] (10,-24) circle (0pt) node[below]{\footnotesize $\A^{1}_{n}$};
\filldraw[black] (14,-24) circle (2pt);
\filldraw[black] (14,-24) circle (0pt) node[below]{\footnotesize $\A^{2}_{n}$};

\draw[black, thick] (10,-24) -- (12,-22);
\draw[black, thick] (14,-24) -- (12,-22);

\filldraw[black] (12,-22) circle (2pt);
\filldraw[black] (12,-22) circle (0pt) node[above]{\footnotesize $\A^{6}_{n}$};

\filldraw[black] (10,-22) circle (0pt) node{(22)};

\filldraw[black] (18,-24) circle (2pt);
\filldraw[black] (18,-24) circle (0pt) node[below]{\footnotesize $\A^{2}_{n}$};

\draw[black, thick] (18,-24) -- (18,-22);

\filldraw[black] (18,-22) circle (2pt);
\filldraw[black] (18,-22) circle (0pt) node[above]{\footnotesize $\A^{6}_{n}$};

\filldraw[black] (16,-22) circle (0pt) node{(23)};

\filldraw[black] (22,-24) circle (2pt);
\filldraw[black] (22,-24) circle (0pt) node[below]{\footnotesize $\A^{1}_{n}$};

\draw[black, thick] (22,-24) -- (22,-22);

\filldraw[black] (22,-22) circle (2pt);
\filldraw[black] (22,-22) circle (0pt) node[above]{\footnotesize $\A^{6}_{n}$};

\filldraw[black] (20,-22) circle (0pt) node{(24)};


\filldraw[black] (2,-30) circle (2pt);
\filldraw[black] (2,-30) circle (0pt) node[below]{\footnotesize $\A^{3}_{n}$};

\draw[black, thick] (2,-30) -- (2,-28);

\filldraw[black] (2,-28) circle (2pt);
5\filldraw[black] (2,-28) circle (0pt) node[above]{\footnotesize $\A^{6}_{n}$};

\filldraw[black] (0,-28) circle (0pt) node{(25)};

\filldraw[black] (6,-30) circle (2pt);
\filldraw[black] (6,-30) circle (0pt) node[below]{\footnotesize $\A^{5}_{n}$};

\draw[black, thick] (6,-30) -- (6,-28);

\filldraw[black] (6,-28) circle (2pt);
\filldraw[black] (6,-28) circle (0pt) node[above]{\footnotesize $\A^{6}_{n}$};

\filldraw[black] (4,-28) circle (0pt) node{(26)};

\filldraw[black] (10,-30) circle (2pt);
\filldraw[black] (10,-30) circle (0pt) node[below]{\footnotesize $\A^{4}_{n}$};

\draw[black, thick] (10,-30) -- (10,-28);

5\filldraw[black] (10,-28) circle (2pt);
\filldraw[black] (10,-28) circle (0pt) node[above]{\footnotesize $\A^{6}_{n}$};

\filldraw[black] (8,-28) circle (0pt) node{(27)};

\end{tikzpicture}

\caption{\label{Spl-frames}Frames of algebras isolated wrt $\A_{7}$ and $\A_{8}$}
\end{figure}

\textbf{Step 1.3.} Considering any frame presented in Figure \ref{Spl-frames}, we replace any algebra $\A_{n}^{m}$ in this frame with algebra ${\A_{n}^{m}}^{-}$, where ${A_{n}^{m}}^{-}={A_{n}^{m}}\setminus\bigcup_{i<m}A_{n}^{i}$.

\textbf{Step 2.} We proceed to the analysis of the possibility of determining P-homomorphisms. 

Let us start with algebra $\A_{7}$. We have the following P-homomorphisms:
\begin{itemize}
\item $\h_{1,3}(a_{1})=a_{5}$ and $\h_{1,3}(a_{2})=a_{6}$, $\h_{1,4}(a_{1})=a_{7}$ and $\h_{1,4}(a_{2})=a_{8}$, 
$\h_{1,6}(a_{1})=a_{11}$ and $\h_{1,6}(a_{2})=a_{10}$;
\item $\h_{2,3}(a_{3})=a_{5}$ and $\h_{2,3}(a_{4})=a_{6}$, $\h_{2,4}(a_{3})=a_{7}$ and $\h_{2,4}(a_{4})=a_{8}$, 
$\h_{2,6}(a_{3})=a_{11}$ and $\h_{2,6}(a_{4})=a_{10}$;
\item $\h_{3,4}(a_{5})=a_{7}$ and $\h_{3,4}(a_{6})=a_{8}$, 
$\h_{3,6}(a_{5})=a_{11}$ and $\h_{3,6}(a_{6})=a_{10}$;
\item $\h_{4,6}(a_{7})=a_{11}$ and $\h_{4,6}(a_{8})=a_{12}$, 
\end{itemize}
where $\h_{i,j}\in\Hom({\A_{7}^{i}}^{-},{\A_{7}^{j}}^{-})$, for the considered indexes $i, j$.

Note that there is no homomorphism from ${\A_{7}^{5}}^{-}$ to ${\A_{7}^{6}}\setminus\A_{7}^{5}$. By Table \ref{Def.oper.A7}, $a_{9}\ast a_{9}=a_{9}$. However, for every $a\in A_{7}^{6}\setminus A_{7}^{5}$, $a\ast a\neq a$. Thus, frames (1), (2), (4), (5), (6), (7), (11), (14), (15), (16), (21) and (26) form Figure \ref{Spl-frames}, are not those over which we can determine systems of algebras isolated wrt $\A_{7}$. In the case of the remaining frames from Figure \ref{Spl-frames}, the sound sets of P-homomorphisms can be determined.

The possible to define systems of algebras isolated wrt $\A_{7}$ we present in Figure \ref{A7-Spl-System}. By Corollary \ref{wn:decomposition} algebra $\A_{7}$ is a Płonka sum.

Let us analyze algebra $\A_{8}$. We will consider case (1) from Figure \ref{Spl-frames} and check whether there are P-homomorphisms from $\A_{8}^{1}$ and $\A_{8}^{2}$ to algebras $\ A_{8}^{n^{-}}$, for any $2<n\leq 6$. 

\begin{itemize}

\item There are no homomorphisms from $\A_{8}^{1^{-}}$ and from $\A_{8}^{2^{-}}$ to $\A_{8}^{3^{-}}$. Similarly, there are no homomorphisms $\A_{8}^{1^{-}}$ and from $\A_{8}^{2^{-}}$ to $\A_{8}^{6^{-}}$. By Table \ref{Def.oper.A8}, $a_{1}\ast a_{1}= a_{1}$ and $a_{2}\ast a_{2}= a_{2}$. In turn, $a_{3}\ast a_{3}=a_{4}$ and $a_{4}\ast a_{4}=a_{3}$. An analogous dependence occurs when trying to determine the homomorphism from $\A_{8}^{1^{-}}$ and from $\A_{8}^{2^{-}}$ to $\A_{8}^{6^{-}}$.

\item There is one homomorphism from $\A_{8}^{1^{-}}$ and one from $\A_{8}^{2^{-}}$ to $\A_{8}^{4^{-}}$. Similarly, there is one homomorphism from $\A_{8}^{1^{-}}$ and from $\A_{8}^{2^{-}}$ to $\A_{8}^{5^{-}}$. By Table \ref{Def.oper.A8}, $a_{5}\ast a_{5}=a_{5}$. However, such homomorphisms are not P-homomorphisms. By Table \ref{Def.oper.A8}, $a_{1}\ast a_{5}=a_{6}\neq a_{5}\ast a_{5}$ and $a_{2}\ast a_{5}=a_{6}\neq a_{5}\ast a_{5}$. 
An analogous dependence occurs when considering homomorphisms from $\A_{8}^{1^{-}}$ and from $\A_{8}^{2^{-}}$ to $\A_{8}^{6^{-}}$.
\end{itemize} 

According to our analysis, by Corollary \ref{wn:decomposition}, algebra $\A_{8}$ is not a Płonka sum.

\begin{figure}[htbp]
\centering

\begin{tikzpicture}[scale=0.54]

\filldraw[black] (0,0) circle (2pt);
\filldraw[black] (0,0) circle (0pt) node[below]{\footnotesize $\A^{1}_{7}$};
\filldraw[black] (4,0) circle (2pt);
\filldraw[black] (4,0) circle (0pt) node[below]{\footnotesize $\A^{2}_{7}$};

\draw[black, thick, ->] (0,0) -- (1.8,1.8);
\filldraw[black] (0.8,0.8) circle (0pt) node[left]{\footnotesize $\h_{1,3}$};
\draw[black, thick, ->] (4,0) -- (2.2,1.8);
\filldraw[black] (5,0.8) circle (0pt) node[left]{\footnotesize $\h_{2,3}$};

\filldraw[black] (2,2) circle (2pt);
\filldraw[black] (1.8,2) circle (0pt) node[left]{\footnotesize ${\A^{3}_{7}}^{-\{1,2\}}$};

\draw[black, thick, ->] (2,2) -- (2,3.8);
\filldraw[black] (2,3) circle (0pt) node[right]{\footnotesize $\h_{3,4}$};

\filldraw[black] (2,4) circle (2pt);
\filldraw[black] (2,4) circle (0pt) node[left]{\footnotesize ${\A_{7}^{4}}^{-\{3\}}$};

\draw[black, thick, ->] (2,4) -- (2,5.8);
\filldraw[black] (2,5) circle (0pt) node[right]{\footnotesize $\h_{4,6}$};

\filldraw[black] (2,6) circle (2pt);
\filldraw[black] (2,6) circle (0pt) node[above]{\footnotesize ${\A_{7}^{6}}^{-\{4\}}$};

\filldraw[black] (0,6) circle (0pt) node{(1)};

\filldraw[black] (8,0) circle (2pt);
\filldraw[black] (8,0) circle (0pt) node[left]{\footnotesize $\A_{7}^{2}$};

\draw[black, thick,->] (8,0) -- (8,1.8);
\filldraw[black] (8,1) circle (0pt) node[right]{\footnotesize $\h_{2,3}$};

\filldraw[black] (8,2) circle (2pt);
\filldraw[black] (8,2) circle (0pt) node[left]{\footnotesize ${\A_{7}^{3}}^{-\{2\}}$};

\draw[black, thick, ->] (8,2) -- (8,3.8);
\filldraw[black] (8,3) circle (0pt) node[right]{\footnotesize $\h_{3,4}$};

\filldraw[black] (8,4) circle (2pt);
\filldraw[black] (8,4) circle (0pt) node[left]{\footnotesize ${\A_{7}^{4}}^{-\{3\}}$};

\draw[black, thick, ->] (8,4) -- (8,5.8);
\filldraw[black] (8,5) circle (0pt) node[right]{\footnotesize $\h_{4,6}$};

\filldraw[black] (8,6) circle (2pt);
\filldraw[black] (8,6) circle (0pt) node[above]{\footnotesize ${\A_{7}^{6}}^{-\{4\}}$};

\filldraw[black] (6,6) circle (0pt) node{(2)};

\filldraw[black] (13,0) circle (2pt);
\filldraw[black] (13,0) circle (0pt) node[left]{\footnotesize $\A_{7}^{1}$};

\draw[black, thick, ->] (13,0) -- (13,1.8);
\filldraw[black] (13,1) circle (0pt) node[right]{\footnotesize $\h_{1,3}$};

\filldraw[black] (13,2) circle (2pt);
\filldraw[black] (13,2) circle (0pt) node[left]{\footnotesize ${\A_{7}^{3}}^{-\{1\}}$};

\draw[black, thick, ->] (13,2) -- (13,3.8);
\filldraw[black] (13,3) circle (0pt) node[right]{\footnotesize $\h_{3,4}$};

\filldraw[black] (13,4) circle (2pt);
\filldraw[black] (13,4) circle (0pt) node[left]{\footnotesize ${\A_{7}^{4}}^{-\{3\}}$};

\draw[black, thick, ->] (13,4) -- (13,5.8);
\filldraw[black] (13,5) circle (0pt) node[right]{\footnotesize $\h_{4,6}$};

\filldraw[black] (13,6) circle (2pt);
\filldraw[black] (13,6) circle (0pt) node[above]{\footnotesize ${\A_{7}^{6}}^{-\{4\}}$};

\filldraw[black] (11,6) circle (0pt) node{(3)};

\filldraw[black] (16,2) circle (2pt);
\filldraw[black] (16,2) circle (0pt) node[below]{\footnotesize $\A_{7}^{1}$};
\filldraw[black] (20,2) circle (2pt);
\filldraw[black] (20,2) circle (0pt) node[below]{\footnotesize $\A_{7}^{2}$};

\draw[black, thick,->] (16,2) -- (17.8,3.8);
\filldraw[black] (16.6,2.5) circle (0pt) node[right]{\footnotesize $\h_{1,3}$};
\draw[black, thick, ->] (20,2) -- (18.2,3.8);
\filldraw[black] (19.5,2.5) circle (0pt) node[right]{\footnotesize $\h_{2,3}$};

\filldraw[black] (18,4) circle (2pt);
\filldraw[black] (18,4.1) circle (0pt) node[left]{\footnotesize ${\A_{7}^{3}}^{-\{1,2\}}$};

\draw[black, thick] (18,4) -- (18,6);
\filldraw[black] (18,5) circle (0pt) node[right]{\footnotesize $\h_{3,6}$};

\filldraw[black] (18,6) circle (2pt);
\filldraw[black] (18,6) circle (0pt) node[above]{\footnotesize ${\A_{7}^{6}}^{-\{3\}}$};

\filldraw[black] (16,6) circle (0pt) node{(4)};


\filldraw[black] (0,-6) circle (2pt);
\filldraw[black] (0,-6) circle (0pt) node[below]{\footnotesize $\A_{7}^{1}$};
\filldraw[black] (4,-6) circle (2pt);
\filldraw[black] (4,-6) circle (0pt) node[below]{\footnotesize $\A_{7}^{2}$};

\draw[black, thick, ->] (0,-6) -- (1.8,-4.2);
\filldraw[black] (0.8,-5.2) circle (0pt) node[left]{\footnotesize $\h_{1,4}$};
\draw[black, thick, ->] (4,-6) -- (2.2,-4.2);
\filldraw[black] (5,-5.2) circle (0pt) node[left]{\footnotesize $\h_{2,4}$};

\filldraw[black] (2,-4) circle (2pt);
\filldraw[black] (2,-3.9) circle (0pt) node[left]{\footnotesize ${\A_{7}^{4}}^{-\{1,2\}}$};

\draw[black, thick, ->] (2,-4) -- (2,-2.2);
\filldraw[black] (2,-3) circle (0pt) node[right]{\footnotesize $\h_{4,6}$};

\filldraw[black] (2,-2) circle (2pt);
\filldraw[black] (2,-2) circle (0pt) node[above]{\footnotesize ${\A_{7}^{6}}^{-\{4\}}$};

\filldraw[black] (0,-2) circle (0pt) node{(5)};

\filldraw[black] (8,-6) circle (2pt);
\filldraw[black] (8,-6) circle (0pt) node[left]{\footnotesize $\A_{7}^{2}$};

\draw[black, thick, ->] (8,-6) -- (8,-4.2);
\filldraw[black] (8,-5) circle (0pt) node[right]{\footnotesize $\h_{2,3}$};

\filldraw[black] (8,-4) circle (2pt);
\filldraw[black] (8,-4) circle (0pt) node[left]{\footnotesize ${\A_{7}^{3}}^{-\{2\}}$};

\draw[black, thick, ->] (8,-4) -- (8,-2.2);
\filldraw[black] (8,-3) circle (0pt) node[right]{\footnotesize $\h_{3,6}$};

\filldraw[black] (8,-2) circle (2pt);
\filldraw[black] (8,-2) circle (0pt) node[above]{\footnotesize ${\A_{7}^{6}}^{-\{3\}}$};

\filldraw[black] (6,-2) circle (0pt) node{(6)};

\filldraw[black] (13,-6) circle (2pt);
\filldraw[black] (13,-6) circle (0pt) node[left]{\footnotesize $\A_{7}^{1}$};

\draw[black, thick, ->] (13,-6) -- (13,-4.2);
\filldraw[black] (13,-5) circle (0pt) node[right]{\footnotesize $\h_{1,3}$};

\filldraw[black] (13,-4) circle (2pt);
\filldraw[black] (13,-4) circle (0pt) node[left]{\footnotesize ${\A_{7}^{3}}^{-\{1\}}$};

\draw[black, thick, ->] (13,-4) -- (13,-2.2);
\filldraw[black] (13,-3) circle (0pt) node[right]{\footnotesize $\h_{3,6}$};

\filldraw[black] (13,-2) circle (2pt);
\filldraw[black] (13,-2) circle (0pt) node[above]{\footnotesize ${\A_{7}^{6}}^{-\{3\}}$};

\filldraw[black] (11,-2) circle (0pt) node{(7)};

\filldraw[black] (18,-6) circle (2pt);
\filldraw[black] (18,-6) circle (0pt) node[left]{\footnotesize $\A_{7}^{2}$};

\draw[black, thick, ->] (18,-6) -- (18,-4.2);
\filldraw[black] (18,-5) circle (0pt) node[right]{\footnotesize $\h_{2,4}$};

\filldraw[black] (18,-4) circle (2pt);
\filldraw[black] (18,-4) circle (0pt) node[left]{\footnotesize ${\A_{7}^{4}}^{-\{2\}}$};

\draw[black, thick, ->] (18,-4) -- (18,-2.2);
\filldraw[black] (18,-3) circle (0pt) node[right]{\footnotesize $\h_{4,6}$};

\filldraw[black] (18,-2) circle (2pt);
\filldraw[black] (18,-2) circle (0pt) node[above]{\footnotesize ${\A_{7}^{6}}^{-\{4\}}$};

\filldraw[black] (16,-2) circle (0pt) node{(8)};


\filldraw[black] (2,-12) circle (2pt);
\filldraw[black] (2,-12) circle (0pt) node[left]{\footnotesize $\A_{7}^{1}$};

\draw[black, thick, ->] (2,-12) -- (2,-10.2);
\filldraw[black] (2,-11) circle (0pt) node[right]{\footnotesize $\h_{1,4}$};

\filldraw[black] (2,-10) circle (2pt);
\filldraw[black] (2,-10) circle (0pt) node[left]{\footnotesize ${\A_{7}^{4}}^{-\{1\}}$};

\draw[black, thick, ->] (2,-10) -- (2,-8.2);
\filldraw[black] (2,-9) circle (0pt) node[right]{\footnotesize $\h_{4,6}$};

\filldraw[black] (2,-8) circle (2pt);
\filldraw[black] (2,-8) circle (0pt) node[above]{\footnotesize ${\A_{7}^{6}}^{-\{4\}}$};

\filldraw[black] (0,-8) circle (0pt) node{(9)};

\filldraw[black] (8,-12) circle (2pt);
\filldraw[black] (8,-12) circle (0pt) node[left]{\footnotesize $\A_{7}^{3}$};

\draw[black, thick, ->] (8,-12) -- (8,-10.2);
\filldraw[black] (8,-11) circle (0pt) node[right]{\footnotesize $\h_{3,4}$};

\filldraw[black] (8,-10) circle (2pt);
\filldraw[black] (8,-10) circle (0pt) node[left]{\footnotesize ${\A_{7}^{4}}^{-\{3\}}$};

\draw[black, thick, ->] (8,-10) -- (8,-8.2);
\filldraw[black] (8,-9) circle (0pt) node[right]{\footnotesize $\h_{4,6}$};

\filldraw[black] (8,-8) circle (2pt);
\filldraw[black] (8,-8) circle (0pt) node[above]{\footnotesize ${\A_{7}^{6}}^{-\{4\}}$};

\filldraw[black] (6,-8) circle (0pt) node{(10)};

\filldraw[black] (11,-10) circle (2pt);
\filldraw[black] (11,-10) circle (0pt) node[below]{\footnotesize $\A_{7}^{1}$};
\filldraw[black] (15,-10) circle (2pt);
\filldraw[black] (15,-10) circle (0pt) node[below]{\footnotesize $\A_{7}^{2}$};

\draw[black, thick, ->] (11,-10) -- (12.8,-8.2);
\filldraw[black] (12,-9) circle (0pt) node[left]{\footnotesize $\h_{1,6}$};
\draw[black, thick, ->] (15,-10) -- (13.2,-8.2);
\filldraw[black] (14.5,-9.5) circle (0pt) node[left]{\footnotesize $\h_{2,6}$};

\filldraw[black] (13,-8) circle (2pt);
\filldraw[black] (13,-8) circle (0pt) node[above]{\footnotesize ${\A_{7}^{6}}^{-\{1,2\}}$};

\filldraw[black] (11,-8) circle (0pt) node{(11)};

\filldraw[black] (18,-10) circle (2pt);
\filldraw[black] (18,-10) circle (0pt) node[below]{\footnotesize $\A_{7}^{2}$};

\draw[black, thick, ->] (18,-10) -- (18,-8.2);
\filldraw[black] (18,-9) circle (0pt) node[right]{\footnotesize $\h_{2,6}$};

\filldraw[black] (18,-8) circle (2pt);
\filldraw[black] (18,-8) circle (0pt) node[above]{\footnotesize ${\A_{7}^{6}}^{-\{2\}}$};

\filldraw[black] (16,-8) circle (0pt) node{(12)};

\filldraw[black] (2,-16) circle (2pt);
\filldraw[black] (2,-16) circle (0pt) node[below]{\footnotesize $\A_{7}^{1}$};

\draw[black, thick, ->] (2,-16) -- (2,-14.2);
\filldraw[black] (2,-15) circle (0pt) node[right]{\footnotesize $\h_{1,6}$};

\filldraw[black] (2,-14) circle (2pt);
\filldraw[black] (2,-14) circle (0pt) node[above]{\footnotesize ${\A_{7}^{6}}^{-\{1\}}$};

\filldraw[black] (0,-14) circle (0pt) node{(13)};

\filldraw[black] (8,-16) circle (2pt);
\filldraw[black] (8,-16) circle (0pt) node[below]{\footnotesize $\A_{7}^{3}$};

\draw[black, thick, ->] (8,-16) -- (8,-14.2);
\filldraw[black] (8,-15) circle (0pt) node[right]{\footnotesize $\h_{3,6}$};

\filldraw[black] (8,-14) circle (2pt);
\filldraw[black] (8,-14) circle (0pt) node[above]{\footnotesize ${\A_{7}^{6}}^{-\{3\}}$};

\filldraw[black] (6,-14) circle (0pt) node{(14)};

\filldraw[black] (13,-16) circle (2pt);
\filldraw[black] (13,-16) circle (0pt) node[below]{\footnotesize $\A_{7}^{4}$};

\draw[black, thick, ->] (13,-16) -- (13,-14.2);
\filldraw[black] (13,-15) circle (0pt) node[right]{\footnotesize $\h_{4,6}$};

\filldraw[black] (13,-14) circle (2pt);
\filldraw[black] (13,-14) circle (0pt) node[above]{\footnotesize ${\A_{7}^{6}}^{-\{4\}}$};

\filldraw[black] (11,-14) circle (0pt) node{(15)};

\end{tikzpicture}
\caption{\label{A7-Spl-System}Systems of algebras isolated wrt  $\A_{7}$}
\end{figure}

\newpage

\section*{Summary}

In the article, starting from the given algebra, which is the Płonka sum, we presented a method for determining all its direct systems. Our method consists in decomposing the Płonka sums into the so-called isolated algebras, for which we can define the so-called Płonka homomorphisms. 
Our decomposition can be performed in two steps (cf. Section \ref{Section 5}). The first step consists of three components. First, we determine the family of algebras isolated wrt a given algebra. Secondly, we select all frames of isolated algebras. Thirdly, we modify the obtained frames by replacing the algebras of the given frame with complement algebras. In the second step, we select a family of P-homomorphisms. The structure obtained in this way constitutes a direct system of the initial algebra being a Płonka sum.
The presented decomposition method allows not only to determine all direct systems of a given Płonka sum but also check whether a given algebra is a Płonka sum.

In this paper, we focused on algebras with respect to which finitely many isolated algebras can be determined. An open problem is the question of the possibility of extending our result to algebras with respect to which we can determine infinitely many isolated algebras.
Additionally, in our work we did not analyze the problem of uniqueness of the decomposition of the Płonka sum into isolated algebras, in particular the problem of irreducibility of algebras that are elements of direct systems of the decomposed sum.
Although we already have partial results regarding these open problems, we will present a detailed analysis of them in subsequent works.


\newpage

\section*{Appendix}

\hypertarget{Appendix}{Examples \ref{Przyklad1} and \ref{Przyklad2} present Płonka sums; in turn, Example \ref{splituje,brak homomorfizmu} presents an algebra that is not a Płonka sum.} 

\begin{Przyklad}\label{Przyklad1}
Let us consider a join-semilattice $\I=\langle I,\leqslant\rangle$, where $I=\{1,2,3\}$, wherein $1, 2 \leqslant 3$, $1 \nleqslant 2$ and $2 \nleqslant 1$.
Let us consider three two-element Boolean algebras:
$\B_{1}=\langle\{a,b\}; \vee,\wedge,' \rangle$,  $\B_{2}=\langle\{c,d\}; \vee,\wedge,' \rangle$, $\B_3=\langle\{e,f\}; \vee,\wedge, '\rangle$.
We define homomorphisms $\h_{1,3}$ and $\h_{2,3}$ in the following way:
$\h_{1,3}(a)=e$, $\h_{1,3}(b)=f$ and $\h_{2,3}(c)=e$, $\h_{2,3}(d)=f$.

We can easily check that algebra $\A_1=\langle\{a,b,c,d,e,f\}; \vee,\wedge,' \rangle$, whose operations are defined in Table \ref{Def.oper.A1}, is the Płonka sum based on direct system $\langle\I,\{\B_{i}\}_{i\in I},\allowbreak\{\h_{i,j}\}_{i\leqslant j;i,j\in I}\rangle$: 
\begin{table}[ht!]
\centering
\special{em:linewidth 0.4pt}
\linethickness{0.5pt}
\scalebox{0.9}{
{
\begin{tabular}{||c||c|c|c|c|c|c||}
\hhline{|t:=:t:======:t|}
$\wedge$ & $a$ & $b$ & $c$ & $d$ & $e$ & $f$\\
\hhline{|:=::======:|}
$a$      & $a$ & $a$ & $e$ & $e$ & $e$ & $e$ \\
\hhline{||-||-|-|-|-|-|-||}
$b$      & $a$ & $b$ & $e$ & $f$ & $e$ & $f$ \\
\hhline{||-||-|-|-|-|-|-||}
$c$      & $e$ & $e$ & $c$ & $c$ & $e$ & $e$ \\
\hhline{||-||-|-|-|-|-|-||}
$d$      & $e$ & $f$ & $c$ & $d$ & $e$ & $f$ \\
\hhline{||-||-|-|-|-|-|-||}
$e$      & $e$ & $e$ & $e$ & $e$ & $e$ & $e$ \\
\hhline{||-||-|-|-|-|-|-||}
$f$      & $e$ & $f$ & $e$ & $f$ & $e$ & $f$ \\
\hhline{|b:=:b:======:b|}
\end{tabular}
\hspace{0.4cm}
\begin{tabular}{||c||c|c|c|c|c|c||}
\hhline{|t:=:t:======:t|}
$\vee$ & $a$ & $b$ & $c$ & $d$ & $e$ & $f$\\
\hhline{|:=::======:|}
$a$      & $a$ & $b$ & $e$ & $f$ & $e$ & $f$ \\
\hhline{||-||-|-|-|-|-|-||}
$b$      & $b$ & $b$ & $f$ & $f$ & $f$ & $f$ \\
\hhline{||-||-|-|-|-|-|-||}
$c$      & $e$ & $f$ & $c$ & $d$ & $e$ & $f$ \\
\hhline{||-||-|-|-|-|-|-||}
$d$      & $f$ & $f$ & $d$ & $d$ & $f$ & $f$ \\
\hhline{||-||-|-|-|-|-|-||}
$e$      & $e$ & $f$ & $e$ & $f$ & $e$ & $f$ \\
\hhline{||-||-|-|-|-|-|-||}
$f$      & $f$ & $f$ & $f$ & $f$ & $f$ & $f$ \\
\hhline{|b:=:b:======:b|}
\end{tabular}
\hspace{0.4cm}
\begin{tabular}{||c||c||}
\hhline{|t:=:t:=:t|}
$x$ & $x'$ \\
\hhline{|:=::=:|}
$a$      & $b$ \\
\hhline{||-||-||}
$b$      & $a$ \\
\hhline{||-||-||}
$c$      & $d$\\
\hhline{||-||-||}
$d$      & $c$\\
\hhline{||-||-||}
$e$      & $f$\\
\hhline{||-||-||}
$f$      & $e$\\
\hhline{|b:=:b:=:b|}
\end{tabular}
}}
\caption{\label{Def.oper.A1}Operations of algebra $\A_{1}$}
\end{table}
\end{Przyklad}


\begin{Przyklad}\label{Przyklad2}
Let us modify Example \ref{Przyklad1}. Let us consider a join-semilattice $\I=\langle I,\leqslant\rangle$, where  $I=\{1,2,3\}$, 
wherein $1 \leqslant 2 \leqslant 3$ and --- as in Example \ref{Przyklad1} --- let us consider three two-element Boolean algebras:
$\B_{1}$, $\B_{2}$, $\B_{3}$. 
We also consider homomorphisms $\h_{1,2}:B_{1}\fun B_{2}$ and $\h_{2,3}:B_{2}\fun B_{3}$ defined in the following way:
$\h_{1,2}(a)=c$, $\h_{1,2}(b)=d$ and $\h_{2,3}(c)=e$, $\h_{2,3}(d)=f$.

Algebra $\A_2=\langle\{a,b,c,d,e,f\}; \vee,\wedge,' \rangle$, whose operations are defined in Table \ref{Def.oper.A2}, is a Płonka sum based on direct system $\langle\I,\{\B_{i}\}_{i\in I},\allowbreak\{\h_{i,j}\}_{i\leqslant j;i,j\in I}\rangle$:   
\begin{table}[ht!]
\centering
\special{em:linewidth 0.4pt}
\linethickness{0.5pt}
\scalebox{0.9}{
{
\begin{tabular}{||c||c|c|c|c|c|c||}
\hhline{|t:=:t:======:t|}
$\wedge$ & $a$ & $b$ & $c$ & $d$ & $e$ & $f$\\
\hhline{|:=::======:|}
$a$      & $a$ & $a$ & $c$ & $c$ & $e$ & $e$ \\
\hhline{||-||-|-|-|-|-|-||}
$b$      & $a$ & $b$ & $c$ & $c$ & $e$ & $f$ \\
\hhline{||-||-|-|-|-|-|-||}
$c$      & $c$ & $c$ & $c$ & $c$ & $e$ & $e$ \\
\hhline{||-||-|-|-|-|-|-||}
$d$      & $c$ & $d$ & $c$ & $d$ & $e$ & $f$ \\
\hhline{||-||-|-|-|-|-|-||}
$e$      & $e$ & $e$ & $e$ & $e$ & $e$ & $e$ \\
\hhline{||-||-|-|-|-|-|-||}
$f$      & $e$ & $f$ & $e$ & $f$ & $e$ & $f$ \\
\hhline{|b:=:b:======:b|}
\end{tabular}
\hspace{0.4cm}
\begin{tabular}{||c||c|c|c|c|c|c||}
\hhline{|t:=:t:======:t|}
$\vee$ & $a$ & $b$ & $c$ & $d$ & $e$ & $f$\\
\hhline{|:=::======:|}
$a$      & $a$ & $b$ & $c$ & $d$ & $e$ & $f$ \\
\hhline{||-||-|-|-|-|-|-||}
$b$      & $b$ & $b$ & $d$ & $d$ & $f$ & $f$ \\
\hhline{||-||-|-|-|-|-|-||}
$c$      & $c$ & $d$ & $c$ & $d$ & $e$ & $f$ \\
\hhline{||-||-|-|-|-|-|-||}
$d$      & $d$ & $d$ & $d$ & $d$ & $f$ & $f$ \\
\hhline{||-||-|-|-|-|-|-||}
$e$      & $e$ & $f$ & $e$ & $f$ & $e$ & $f$ \\
\hhline{||-||-|-|-|-|-|-||}
$f$      & $f$ & $f$ & $f$ & $f$ & $f$ & $f$ \\
\hhline{|b:=:b:======:b|}
\end{tabular}
\hspace{0.4cm}
\begin{tabular}{||c||c||}
\hhline{|t:=:t:=:t|}
$x$ & $x'$ \\
\hhline{|:=::=:|}
$a$      & $b$ \\
\hhline{||-||-||}
$b$      & $a$ \\
\hhline{||-||-||}
$c$      & $d$\\
\hhline{||-||-||}
$d$      & $c$\\
\hhline{||-||-||}
$e$      & $f$\\
\hhline{||-||-||}
$f$      & $e$\\
\hhline{|b:=:b:=:b|}
\end{tabular}
}}
\caption{\label{Def.oper.A2}Operations of algebra $\A_{2}$}
\end{table}
\end{Przyklad}

\begin{Przyklad}\label{splituje,brak homomorfizmu}
Now let the join-semilattice be the structure $\I=\langle I,\leqslant\rangle$, where $I=\{1,2\}$,
with $1 \leqslant 2 $ and --- as in Examples \ref{Przyklad1} and \ref{Przyklad2} --- two two-element Boolean algebras will be given:
$\B_{1}$, $\B_{2}$. Consider function $\h_{1,2}:B_{1}\fun B_{2}$ defined in the following way:
$\h_{1,2}(a)=c$, $\h_{1,2}(b)=c$.

Algebra $\A_3=\langle\{a,b,c,d,\}; \vee,\wedge,'\rangle$, whose operations are defined in Table \ref{Def.oper.A3}, is not a Płonka sum. 
\begin{figure}[ht!]
\centering
\special{em:linewidth 0.4pt}
\linethickness{0.5pt}
\scalebox{1}{
{
\begin{tabular}{||c||c|c|c|c||}
\hhline{|t:=:t:====:t|}
$\wedge$ & $a$ & $b$ & $c$ & $d$ \\
\hhline{|:=::====:|}
$a$      & $a$ & $a$ & $c$ & $c$  \\
\hhline{||-||-|-|-|-||}
$b$      & $a$ & $b$ & $c$ & $c$  \\
\hhline{||-||-|-|-|-||}
$c$      & $c$ & $c$ & $c$ & $c$  \\
\hhline{||-||-|-|-|-||}
$d$      & $c$ & $c$ & $c$ & $d$  \\
\hhline{|b:=:b:====:b|}
\end{tabular}
\hspace{0.4cm}
\begin{tabular}{||c||c|c|c|c||}
\hhline{|t:=:t:====:t|}
$\vee$   & $a$ & $b$ & $c$ & $d$ \\
\hhline{|:=::====:|}
$a$      & $a$ & $b$ & $c$ & $d$ \\
\hhline{||-||-|-|-|-||}
$b$      & $b$ & $b$ & $c$ & $d$ \\
\hhline{||-||-|-|-|-||}
$c$      & $c$ & $c$ & $c$ & $d$ \\
\hhline{||-||-|-|-|-||}
$d$      & $d$ & $d$ & $d$ & $d$ \\
\hhline{|b:=:b:====:b|}
\end{tabular}
\hspace{0.4cm}
\begin{tabular}{||c||c||}
\hhline{|t:=:t:=:t|}
$x$ & $x'$ \\
\hhline{|:=::=:|}
$a$      & $b$ \\
\hhline{||-||-||}
$b$      & $a$ \\
\hhline{||-||-||}
$c$      & $d$\\
\hhline{||-||-||}
$d$      & $c$\\
\hhline{|b:=:b:=:b|}
\end{tabular}
}}
\caption{\label{Def.oper.A3}Operations of algebra $\A_{3}$}
\end{figure}

We note that $\h_{1,2}$ is not a homomorphism. For example, we have $\h_{1,2}(a')\allowbreak=c\neq d=\h_{1,2}(a)'$. It is known that in general we have two functions from $\B_{1}$ to $\B_{2}$ that could be taken into account to show that $\A_{3}$ is a Płonka sum. The function $\psi\colon B_{1}\fun B_{2}$ such that $\psi(a)=c$ and $\psi(b)=d$ is a homomorphism in the analyzed case, but does not meet the condition of operation of a Płonka sum (see Definition \ref{def:suma-Plonki}). For $\A_{3}$ we have $b\vee c=c$. In turn, the function $\chi\colon B_{1}\fun B_{2}$ such that $\chi(a)=d$ and $\chi(b)=c$ will not be a homomorphism in the analyzed case. For $\A_{3}$ we have $\chi(a\wedge b)=d\neq c=\chi(a)\wedge\chi(b)$.
\end{Przyklad}  

In Example \ref{Przyklad-Spl} we present isolated algebras specified wrt algebras consider in the examples presented above.

\begin{Przyklad}\label{Przyklad-Spl}
\begin{itemize}
\item With respect to algebra $\A_1$, isolated algebras are subalgebras of $\A_{1}$ with universes: $\{a,b\}$, $\{c,d\}$ and $\{a,b,c,d,e,f\}$. The algebra whose universe is $\{e,f\}$ is not isolated wrt $\A_{1}$.

\item With respect to algebra $\A_2$, isolated algebras are subalgebras of $\A_{2}$ with universes: $\{a,b\}$, $\{a,b,c,d\}$
and $\{a,b,c,d,e,f\}$. Algebras with universes $\{c,d\}$ and $\{e,f\}$ are not isolated wrt $\A_{2}$.

\item With respect to the algebra $\A_3$, isolated algebras are subalgebras of $\A_{3}$ with the universes: $\{a,b\}$ and $\{a,b,c,d\}$. 
\end{itemize}
\end{Przyklad}

In Example \ref{Przyklad-Nieskonczona-SP} we presented an algebra with respect to which we can determine finitely many infinite isolated algebras.

\begin{Przyklad}\label{Przyklad-Nieskonczona-SP}
Let $\mathbb{O}$ be the set of odd natural numbers and $\mathbb{E}$ be the set of even numbers without zero. Let us consider a Płonka sum $\A=\langle \mathbb{N};\ast\rangle$, where $\mathbb{N}$ is the set of natural numbers without zero and $\ast$ is a binary operation defined in the following way:
\[
n\ast m=\begin{cases}
1,\quad & \text{if }n,m\in\mathbb{O}\\
2,\quad & \text{otherwise}.
\end{cases}
\]
Two algebras are isolated wrt $\A$: $\A_{1}=\langle\mathbb{O};\ast\rangle$ and $\A$. Note that algebra $\A_{2}:=\langle\mathbb{E};\ast\rangle=\langle\mathbb{N}\setminus\mathbb{O};\ast\rangle$ is not isolated wrt $\A$. Function $\varphi_{1,2}:\mathbb{O}\fun\mathbb{E}$ s.t. $\h_{1,2}(n)=n+1$
is a homomorphism.
The Płonka sum $\A$ can be determined by direct system $
\langle\langle\{1,2\},\leq\rangle,\{\A_{1},\A_{2}\},\{\h_{1,1},\h_{1,2},\h_{2,2}\}\rangle$.
\end{Przyklad}

Example \ref{Przyklad4} shows that from a given complement algebra to a complement algebra of a `greater' index a homomorphism does not always exist.

\begin{Przyklad}\label{Przyklad4}
Let us consider algebra $\A_{4}=\langle\{a,b,c,d\};\ast\rangle$, where $\ast$ is defined in Table \ref{Def.oper.A4}. 
\begin{table}[ht!]
\centering
\special{em:linewidth 0.4pt}
\linethickness{0.5pt}
\begin{tabular}{||c||c|c|c|c||}
\hhline{|t:=:t:====:t|}
$\ast$   & $a$ & $b$ & $c$ & $d$ \\
\hhline{|:=::====:|}
$a$      & $a$ & $b$ & $c$ & $d$ \\
\hhline{||-||-|-|-|-||}
$b$      & $b$ & $b$ & $c$ & $d$ \\
\hhline{||-||-|-|-|-||}
$c$      & $c$ & $c$ & $d$ & $c$ \\
\hhline{||-||-|-|-|-||}
$d$      & $d$ & $d$ & $d$ & $c$ \\
\hhline{|b:=:b:====:b|}
\end{tabular}
\caption{\label{Def.oper.A4}Operations of algebra $\A_{4}$}
\end{table}

Three algebras are isolated wrt $\A_{4}$: $\B_{1}=\langle\{a\};\ast\rangle$, $\B_{2}=\langle\{a,b\};\ast\rangle$ and $\B_{3}=\langle\{a,b,c,d\};\ast\rangle$. There is no homomorphism from $\B_{2}$ to $\B_{3}\setminus\B_{2}$. However, algebra $\A_{4}$ is a Płonka sum and can be determined by the following non-trivial direct system $\langle\langle\{1,2\},\{\langle 1,1\rangle,\langle 2,2\rangle,\langle 1,2\rangle\}\rangle,\{\C_{1},\allowbreak\C_{2}\},\{\varphi_{1,1},\varphi_{2,2},\varphi_{1,2}\}\rangle$, 
where $\dom(\C_{1})=\{a\}$, $\dom(\C_{2})=\{b,c,d\}$ and $\varphi_{1,2}(a)=b$.
\end{Przyklad}

Example \ref{Przyklad5} shows that not every homomorphism from a complement algebra to a complement algebra is a homomorphism suitable for determining the Płonka sum operation in the manner specified in  Definition \ref{def:suma-Plonki}. 

\begin{Przyklad}\label{Przyklad5}
Let us consider a Płonka sum $\A_{5}=\langle\{a,b,c,d\};\ast\rangle$, where $\ast$ is defined in Table \ref{Def.oper.A5}.
\begin{table}[!h]
\centering
\special{em:linewidth 0.4pt}
\linethickness{0.5pt}
\begin{tabular}{||c||c|c|c|c||}
\hhline{|t:=:t:====:t|}
$\ast$   & $a$ & $b$ & $c$ & $d$ \\
\hhline{|:=::====:|}
$a$      & $a$ & $c$ & $b$ & $a$ \\
\hhline{||-||-|-|-|-||}
$b$      & $c$ & $b$ & $c$ & $c$ \\
\hhline{||-||-|-|-|-||}
$c$      & $b$ & $b$ & $c$ & $b$ \\
\hhline{||-||-|-|-|-||}
$d$      & $a$ & $c$ & $b$ & $d$ \\
\hhline{|b:=:b:====:b|}
\end{tabular}
\caption{\label{Def.oper.A5}Operation $\ast$ of algebra $\A_{5}$}
\end{table}
Płonka sum $\A_{5}$ can be determined by direct system
$\langle\langle\{1,2\},\{\langle 1,1\rangle,\langle 1,2\rangle,\langle 2,2\rangle\}\rangle,\{\C_{1},\C_{2}\},\{\varphi_{1,1},\varphi_{1,2},\varphi_{2,2}\}\rangle$,
where $\dom(\C_{1})=\{d\}$, $\dom(\C_{2})=\{a,b,c\}$ and $\varphi_{1,2}(d)=a$.

Three algebras are isolated wrt $\A_{5}$: $\B_{1}=\langle\{d\};\ast\rangle$, $\B_{2}=\langle\{a,d\};\ast\rangle$ and $\B_{3}=\langle\{a,b,c,\allowbreak d\};\ast\rangle$. Note that there is a homomorphism from $\B_{2}^{-}$ to $\B_{3}^{-}$ (we put: $\varphi(a)=b$). However, this homomorphism cannot be taken into account when trying to determine the direct system of $\A_{5}$. 
We can see that $a\ast c=b\neq c= b\ast c=\varphi(a)\ast c$. Homomorphism $\varphi$ does not satisfy the condition of Definition \ref{def:suma-Plonki} for operation of $\A_{5}$.  
\end{Przyklad}

In Example \ref{Przyklad-P-hom}, we present several P-homomorphisms of the algebras discussed in the examples given above.

\begin{Przyklad}\label{Przyklad-P-hom}
To indicate example of P-homomorphisms, let us return to algebra $\A_{1}$ from Example \ref{Przyklad1} and algebra $\A_{2}$ from Example \ref{Przyklad2}.
\begin{itemize}
\item We have the following P-homomorphisms wrt $\A_{1}$: $\varphi_{1,3}$ from algebra $\B_{1}$ to algebra $\B_{3}$ and $\varphi_{2,3} $ from algebra $\B_{2}$ to algebra $\B_{3}$. 

\item We have the following P-homomorphisms wrt $\A_{2}$: $\varphi_{1,2}$ from algebra $\B_{1}$ to algebra $\B_{2}$ and $\varphi_{2,3}$ from algebra $\B_{2}$ to algebra $\B_{3}$. 
\end{itemize}
In the case of algebra $\A_{5}$ from Example \ref{Przyklad5}, we indicated a homomorphism that is not a P-homomorphism wrt $\A_{5}$.
\begin{itemize}
\item Homomorphism $\h$ from $\B_{2}^{-}$ to $\B_{3}^{-}$ is not a P-homomorphism wrt $\A_5$. We noticed that $a\ast c\neq \varphi(a)\ast c$. 
\end{itemize}
\end{Przyklad}

Example \ref{Przyk.:P-homomorfizmy} shows that there might be more than one P-homomorphism from complement algebras to other complement algebras.    

\begin{Przyklad}\label{Przyk.:P-homomorfizmy}
Let us consider Płonka sum $\A_{6}=\langle\{a,b,c,d,e,f\};\ast\rangle$, where binary operation $\ast$ is defined in Table \ref{Def.oper.A6}.
\begin{table}[h!]
\centering
\special{em:linewidth 0.4pt}
\linethickness{0.5pt}
\begin{tabular}{||c||c|c|c|c|c|c||}
\hhline{|t:=:t:======:t|}
$\ast$ & $a$ & $b$ & $c$ & $d$ & $e$ & $f$\\
\hhline{|:=::======:|}
$a$      & $a$ & $a$ & $c$ & $c$ & $e$ & $e$ \\
\hhline{||-||-|-|-|-|-|-||}
$b$      & $a$ & $a$ & $c$ & $c$ & $e$ & $e$ \\
\hhline{||-||-|-|-|-|-|-||}
$c$      & $c$ & $c$ & $c$ & $c$ & $e$ & $e$ \\
\hhline{||-||-|-|-|-|-|-||}
$d$      & $c$ & $c$ & $c$ & $c$ &$e$& $e$ \\
\hhline{||-||-|-|-|-|-|-||}
$e$      & $e$ & $e$ & $e$ & $e$ & $e$ & $e$ \\
\hhline{||-||-|-|-|-|-|-||}
$f$      & $e$ & $e$ & $e$ & $e$ & $e$ & $e$ \\
\hhline{|b:=:b:======:b|}
\end{tabular}
\caption{\label{Def.oper.A6}Operation $\ast$ of algebra $\A_{6}$}
\end{table}

Three algebras are isolated wrt $\A_{6}$: $\B_{1}=\langle\{a,b\};\ast\rangle$, $\B_{2}=\langle\{a,b,c,d\};\ast\rangle$ and of course $\B_{3}=\A_{6}$. Note that there are two homomorphisms from $\B_{1}^{-}=\B_{1}$ to $\B_{2}^{-}$ and two homomorphisms from $\B_{2}$ to $\B_{3}^{-}$, namely:
\begin{itemize}
\item $\varphi_{1,2}: B_1^{-}\longrightarrow B_{2}^{-}$: $\varphi_{1,2}(a)=c$ and $\varphi_{1,2}(b)=d$,
\item $\psi_{1,2}:B_1^{-}\longrightarrow B_{2}^{-}$: $\psi_{1,2}(a)=\psi_{1,2}(b)=c$,
\item $\varphi_{2,3}: B_2^{-}\longrightarrow B_{3}^{-}$:  $\varphi_{2,3}(c)=e$ and $\varphi_{2,3}(d)=f$,
\item $\psi_{2,3}: B_2^{-}\longrightarrow B_{3}^{-}$: $\psi_{2,3}(c)=\psi_{2,3}(d)=e$.
\end{itemize}

It is easy to check that each of these functions is a P-homomorphism. So, we have one frame of isolated algebras and four possible choices of P-homomorphisms. 
\end{Przyklad}


\begin{thebibliography}{10}

\bibitem{SBiTMiMPB2020}
{\sc Bonzio},~S., T. {\sc Moraschini} and M. {\sc Pra Baldi}, `Logics of left variable inclusion and Płonka sums of matrices', \emph{Archive for Mathematical Logic} 60:49--76, 2020.

\bibitem{SBiFPiMPB2022}
{\sc Bonzio},~S., F. {\sc Paoli} and M. {\sc Pra Baldi}, 
\emph{Logics of Variable Inclusion}, 
vol. 59 of Trends in Logic, Springer, 2022.

\bibitem{SBiHPS1981}
{\sc Burris},~S. and H. P. {\sc Sankappanavar}, \emph{A Course in Universal Algebra}, vol. 78 of Graduate Texts in Mathematics, Springer, 1981.

\bibitem{RE1990}
{\sc Epstein},~R.~L., \emph{The semantic foundations of logic. Volume 1: Propositional logics}, Springer Science+Business Media, Dordrecht, 1990.





\bibitem{EG1990}
{\sc Graczyńska},~E., `On normal and regular identities', \emph{Algebra Universalis} 27:387--397, 1990.





\bibitem{TJiFP2021}
{\sc Jarmużek}, T. and F. {\sc Paoli}, `Relating logic and relating semantics. History, philosophical applications and some of technical problems', \emph{Logic and Logical Philosophy} 30(4):563–577, 2021.

\bibitem{MK2021}
{\sc Klonowski}, M., `History of relating logic. The origin and research directions', \emph{Logic and Logical Philosophy} 30(4):579–629, 2021.

\bibitem{SK1991}
{\sc Krajewski}, S., `One or many logics? (Epstein’s set-assignement semantics for logical calculi)', \emph{The Journal of Non-Classical Logic} 8(1):7–33, 1991.

\bibitem{HLiRPiCP1972}
{\sc Lakser},~H., R. {\sc Padmanabhan} and C. {\sc Platt}, `Subdirect decomposition of Płonka sums', \emph{Duke Mathematical Journal} 39:485--488, 1972.

\bibitem{KMN2007}
{\sc Mruczek-Nasieniewska},~K., `Externally compatible Abelian groups of the type (2,1,0)', \emph{Logic and Logical Philosophy} 15:239--250, 2007.

\bibitem{KMN2016}
{\sc Mruczek-Nasieniewska},~K., `On some extensions of the class of MV-algebras', \emph{Logic and Logical Philosophy} 25:35--49, 2016.

\bibitem{KMN2000}
{\sc Mruczek-Nasieniewska},~K., `On some lattice of varieties related to changes of the type', in K. Denecke, H.-J.Vogel (eds.), \emph{General Algebra and Applications. Proceedings of the 59th Workshop on General Algebra}, pp. 147--153, Shaker Verlag, 2000.

\bibitem{KMN2005}
{\sc Mruczek-Nasieniewska},~K., `P-compatible Abelian groups', \emph{Logic and Logical Philosophy} 14:253-263, 2005.

\bibitem{KMN2013}
{\sc Mruczek-Nasieniewska},~K., \emph{Równościowe i zdaniowe logiki P-zgodne} (\emph{Equational and propositional P-compatible logics}), Wydawnictwo UMK, 2013.

\bibitem{KMN2003}
{\sc Mruczek-Nasieniewska},~K., `Subdirectly irreducible P-compatible Abelian groups', \emph{Bulletin of the Section of Logic} 32 (1/2):57--63, 2003.

\bibitem{KMN2010}
{\sc Mruczek-Nasieniewska},~K., `The varieties defined by P-compatible identities of modular ortholattices', \emph{Studia Logica} 95:21--35, 2010.

\bibitem{FPiMPBiDES2021}
{\sc Paoli},~F., M. {\sc Pra Baldi} and D. E. {\sc Szmuc}, `Pure variable inclusion logics', \emph{Logic and Logical Philosophy} 30(4):631--652, 2021.

\bibitem{JP1967a}
{\sc Płonka},~J., `On a method of construction of abstract algebras', \emph{Fundamenta Mathematicae} 61(2):183--189, 1967a.

\bibitem{JP1967b} 
{\sc Płonka},~J., `On distributive quasilatticess', \emph{Fundamenta Mathematicae} 60:191--200, 1967b.

\bibitem{JP1969}
{\sc Płonka},~J., `On equational classes of abstract algebras defined by regular equations’, \emph{Fundamenta Mathematicae} 64(2):241--247. 1969.

\bibitem{JP1989}
{\sc Płonka},~J., `On some decomposition of the sum of a direct system of algebras into a subdirect product', \emph{Demonstratio Mathematica} 22(1):235--239, 1989.

\bibitem{JPiAR1992}
{\sc Płonka},~J. and A. {\sc Romanowska} `Semilattice sums’, \emph{Universal Algebra
and Quasigroup Theory}, 123--158, 1992.



\bibitem{JPiZS2002}
{\sc Płonka},~J. and Z. {\sc Szylicka}, `Subdirectly irreducible generalized sums of upper semilattice ordered systems of algebras', \emph{Algebra Universalis} 47:201--211, 2002.

\bibitem{ARiJS2002}
{\sc Romanowska},~A. and J. {\sc Smith}, \emph{Modes}, World Scientific, 2002.

\end{thebibliography}
\end{document}